\begin{document}
\def\a{{\mathbf a}}
\def\b{{\beta}} 	\def\ra{{\rangle}}	\def\PP{{\mathbb P}}  	\def\ZZ{{\mathbb Z}}  
\def\u{{\mathbf u}}\def\v{{\mathbf v}} \def\w{{\mathbf w}}
\def\bb{{\mathbf b}}

\def\reg{\operatorname {reg}}		   
\def\supp{\operatorname {supp}} 
\def\Tor{\operatorname {Tor}} 	

\newtheorem{Theorem}{Theorem}[section]
\newtheorem{Corollary}[Theorem]{Corollary}
\newtheorem{Lemma}[Theorem]{Lemma}
\newtheorem{Proposition}[Theorem]{Proposition}
\newtheorem{Question}[Theorem]{Question}
\newtheorem{Example}[Theorem]{Example}
\newtheorem{Definition}[Theorem]{Definition}
\newtheorem{Remark}[Theorem]{Remark}
\newtheorem{Conjecture}{Conjecture}
\numberwithin{equation}{section}

\title{Periodicity of betti numbers of monomial curves}  
\author{Thanh Vu}
\address{Department of Mathematics, University of California at Berkeley, Berkeley, CA 94720}
\email{vqthanh@math.berkeley.edu}
\subjclass[2010]{Primary 13D02, 13F55, 05E40}
\keywords{Periodicity of Betti numbers, monomial curves.}
\date{\today}

\begin{abstract} Let $K$ be an arbitrary field. Let $\a = (a_1< \cdots <a_n)$ be a sequence of positive integers. Let $C(\a)$ be the affine monomial curve in ${\mathbb A}^n$ parametrized by $t\to (t^{a_1}, ..., t^{a_n})$. Let $I(\a)$ be the defining ideal of $C(\a)$ in $K[x_1, ..., x_n]$. For each positive integer $j$, let $\a+j$ be the sequence $(a_1 + j, ..., a_n+j)$. In this paper, we prove the conjecture of Herzog and Srinivasan saying that the betti numbers of $I(\a + j)$ are eventually periodic in $j$ with period $a_n -a_1$. When $j$ is large enough, we describe the betti table for the closure of $C(\a+j)$ in $\PP^n$. 
\end{abstract}
\maketitle

\section{Introduction}
Let $K$ denote an arbitrary field. Let $R$ be the polynomial ring $K[x_1,...,x_n]$. Let $\a = (a_1< \cdots<a_n)$ be a sequence of positive integers. The sequence $\a$ gives rise to a monomial curve $C(\a)$ whose parametrization is given by $x_1 = t^{a_1}, ..., x_n = t^{a_n}.$ Let $I(\a)$ be the defining ideal of $C(\a)$. For each positive integer $j$, let $\a+j$ be the sequence $(a_1 + j, ..., a_n + j)$. In this paper, we consider the behaviour of the betti numbers of the defining ideals $I(\a+j)$ and their homogenizations $\bar I(\a+j)$ for positive integers $j$. 

For each finitely generated $R$-module $M$ and each integer $i$, let 
$$\b_i(M) = \dim_K \Tor_i^R(M,K)$$
be the $i$-th total betti number of $M$. The following conjecture was communicated to us by Herzog and Srinivasan. 

\begin{Conjecture}[Herzog-Srinivasan]\label{Conj} The betti numbers of $I(\a+j)$ are eventually periodic in $j$ with period $a_n - a_1$. 
\end{Conjecture}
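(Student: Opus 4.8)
The plan is to pass to the homogeneous setting, reinterpret the question through the affine semigroup attached to the projective closure $\bar C(\a+j)$, and then prove a purely combinatorial stabilization statement. Set $S=K[x_0,\dots,x_n]$. The homogenized ideal $\bar I(\a+j)\subseteq S$ is the defining ideal of $\bar C(\a+j)\subseteq\PP^n$, and $S/\bar I(\a+j)\cong K[\Sigma_j]$ is the affine semigroup ring of
\[
\Sigma_j=\big\langle\,(a_n+j,0),\ (a_n-a_1,a_1+j),\ \dots,\ (a_n-a_i,a_i+j),\ \dots,\ (0,a_n+j)\,\big\rangle\subseteq\ZZ^2,
\]
the bigrading coming from the parametrization $(s:t)\mapsto(s^{a_n+j}:\dots:s^{a_n-a_i}t^{a_i+j}:\dots:t^{a_n+j})$. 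Since $S/\bar I(\a+j)$ is a domain, $x_0-1$ is a nonzerodivisor on it, so dehomogenizing the minimal $\ZZ^2$-graded free resolution of $S/\bar I(\a+j)$ yields a free resolution of $R/I(\a+j)$; in general this gives only $\b_i(I(\a+j))\le\b_{i+1}(S/\bar I(\a+j))$, and the first step is to show that for $j\gg0$ no cancellation occurs (equivalently, $\bar I(\a+j)$ has no minimal syzygies concentrated in $t$-degree $0$), so that equality holds. It then suffices to prove that the Betti numbers of $S/\bar I(\a+j)$ are eventually periodic in $j$ with period $a_n-a_1$ and to describe them, which is also the content of the last sentence of the abstract.

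Next, recall the combinatorial description of the graded Betti numbers of an affine semigroup ring: for $\bb\in\ZZ^2$ let $\Delta_{j,\bb}$ be the simplicial complex on $\{0,1,\dots,n\}$ whose faces are the subsets $F$ with $\bb-\sum_{k\in F}\mathbf g_k(j)\in\Sigma_j$, where $\mathbf g_k(j)$ is the $k$-th generator listed above; then $\b_{i,\bb}(S/\bar I(\a+j))=\dim_K\widetilde H_{i-1}(\Delta_{j,\bb};K)$. Thus the entire Betti table is encoded by the finite set of ``Betti bidegrees'' $\bb$ together with the incidence data recording which lattice points lie in $\Sigma_j$, and the goal becomes to show that this data reproduces itself, up to a translation of bidegrees, under $j\mapsto j+(a_n-a_1)$.

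The analysis proceeds through the blocks of $\Sigma_j$. A point of $\Sigma_j$ in degree $\ell$ is determined by its second coordinate $v$, and the admissible $v$ in degree $\ell$ form $\bigcup_{K=0}^{\ell}(W_K+jK)$, where $W_K=\{a_{i_1}+\dots+a_{i_K}\}\subseteq[Ka_1,Ka_n]$ is the fixed, $j$-independent set of $K$-fold sums of the $a_i$. Testing membership $\bb-\sum_{k\in F}\mathbf g_k(j)\in\Sigma_j$ amounts, after matching the coefficient of $j$, to $j$-free conditions of the form $w-\sum_{k\in F\cap\{1,\dots,n\}}a_k\in W_{K-|F\cap\{1,\dots,n\}|}$. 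Hence the ``bounded'' part of the resolution — the minimal generators and syzygies whose degree $\ell$ stays bounded as $j\to\infty$, where the blocks $W_K+jK$ are pairwise far apart — stabilizes to something independent of $j$. The remaining, high-degree part originates exactly in the range $K\gtrsim j/(a_n-a_1)$ where consecutive blocks $W_K+jK$ and $W_{K+1}+j(K+1)$ begin to overlap; there each $W_K$ is already a full arithmetic progression of step $e:=\gcd(a_2-a_1,\dots,a_n-a_1)$, and the union of the blocks, hence the complexes $\Delta_{j,\bb}$ in that range, is governed by the position of $v$ modulo $e$ relative to $a_1+j$ and $a_n+j$. Since $e\mid a_n-a_1$, replacing $j$ by $j+(a_n-a_1)$ preserves all of these congruences and only translates bidegrees, so $\Delta_{j,\bb}\cong\Delta_{j+(a_n-a_1),\bb'}$ for a correspondingly shifted $\bb'$, and hence $\b_i(S/\bar I(\a+j))=\b_i(S/\bar I(\a+j+(a_n-a_1)))$ for $j\gg0$; tracking the translation produces the explicit Betti table of $\bar C(\a+j)$. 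Combined with the first paragraph, this gives the conjecture for $I(\a+j)$. One may also package the inductive step as an iterated mapping cone building a minimal resolution of $\bar I(\a+j)$ out of one of $\bar I(\a+j-(a_n-a_1))$.

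The main obstacle is precisely the uniform-in-$j$ control in the third paragraph: (i) pinning down an explicit threshold $j_0$ (linear in the $a_i$) past which no minimal generator or syzygy of $\bar I(\a+j)$ is created or destroyed; (ii) handling the interface between the ``bounded'' blocks and the ``overlapping'' regime, and in particular checking that the finitely many $v$ lying in small residue classes — those that are gaps of $\langle a_2-a_1,\dots,a_n-a_1\rangle$ and hence not yet of the clean progression shape — are matched correctly under $j\mapsto j+(a_n-a_1)$; and (iii) verifying that $a_n-a_1$ is indeed a period. A secondary, more technical point is the no-cancellation claim of the first paragraph, which should likewise drop out of the block analysis, since a minimal syzygy in $t$-degree $0$ would force a configuration of blocks that the combinatorics for $j\gg0$ rules out.
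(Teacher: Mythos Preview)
Your outline matches the paper's architecture exactly: pass to the projective closure, compute Betti numbers via the squarefree divisor complexes $\Delta_{j,\bb}$, split into a low-degree part (which stabilizes to the homogeneous ideal $J(\a)$) and a high-degree part, prove periodicity of the high-degree part under $j\mapsto j+b_1$, and then show $\b_i(I(\a+j))=\b_i(\bar I(\a+j))$ for $j\gg0$. So the strategy is right; what is missing is the single structural lemma that actually drives every step in the paper, and your block/congruence heuristic does not replace it.

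The paper's key observation is that whenever $l>n+\reg J(\a)$ and $\Delta_{l,r}(j)$ has nontrivial homology, $\Delta_{l,r}(j)$ is a \emph{double cone}: every facet either contains $\{0,n\}$ or contains $1$ (Lemmas~3.3--3.4). This has two consequences that your sketch does not supply. First, it pins down the exponent on $x_0$: any solution with $y_0>0$ must have $y_0=e$ exactly (Lemma~3.2), which is what lets one prove the literal equality of complexes $\Delta_{l,r}(j)=\Delta_{l+e,r+eb_1}(j+b_1)$ rather than some asymptotic matching. Your congruence argument (``$W_K$ is eventually a full arithmetic progression of step $e$'') only tells you which lattice points lie in $\Sigma_j$, not which \emph{faces} lie in $\Delta_{j,\bb}$; the face condition couples $y_0$ and the $y_i$ through the constraint $\sum y_i=l$, and it is exactly the control $y_0=e$ that your obstacle~(ii) is groping for.

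Second --- and this is where your plan diverges most seriously --- the equality $\b_i(I(\a+j))=\b_i(\bar I(\a+j))$ is \emph{not} proved in the paper as a no-cancellation statement about dehomogenizing a resolution. Your proposed criterion (``no minimal syzygy in $t$-degree $0$'') is not the right one anyway: cancellation under $x_0\mapsto1$ occurs when some matrix entry is a pure power of $x_0$, i.e., when two consecutive Betti bidegrees differ by a multiple of $\deg x_0$, which is not the same thing. The paper instead compares the affine complexes $\Delta_m$ directly to the projective ones: for $m=lk-r$ in the high-degree range it shows $\Delta_m$ is the deletion of vertex $0$ from $\Delta_{l,r}$ (Lemma~5.4), and then the double-cone structure implies that this deletion does not change homology (a cone over $\{0,n\}$ becomes a cone over $\{n\}$, and the cone over $\{1\}$ is untouched). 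There is also an injectivity check (Lemma~5.5) ensuring distinct $(l,r)$ give distinct $m$. None of this falls out of the block picture; it needs the facet dichotomy. So your plan identifies the right skeleton, but the obstacles you list at the end are precisely the content of the proof, and the tool that dispatches them is the double-cone lemma, which your proposal does not contain.
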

In general, the problem of finding defining equations of monomial curves is difficult. For example, in \cite{B}, Bresinsky gave an example of a family of monomial curves in ${\mathbb A}^4$ whose numbers of minimal generators of the defining ideals are unbounded. Recently, in the case $n\le 3$, Conjecture \ref{Conj} was proven by Jayanthan and Srinivasan in \cite{JS}. In the case when $\a$ is an arithmetic sequence, Conjecture \ref{Conj} was proven by Gimenez, Sengupta and Srinivasan in \cite{GSS}. In this paper, we prove the conjecture in full generality:

\begin{Theorem}\label{main} The betti numbers of $I(\a+j)$ are eventually periodic in $j$ with period $a_n - a_1.$
\end{Theorem}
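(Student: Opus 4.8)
The plan is to pass from the affine curve $C(\a+j)$ to its projectivization in $\PP^n$ and work with the homogeneous defining ideal $\bar I(\a+j)$, since $\beta_i(I(\a+j))$ can be recovered from the graded Betti numbers of $\bar I(\a+j)$ by taking a hyperplane section (modding out the extra variable $x_0$). The key observation should be that the \emph{toric} structure lets one index the generators of $\bar I(\a+j)$ by lattice points: the curve $C(\a+j)$ is defined by the kernel of $K[x_0,\dots,x_n]\to K[s,t]$ sending $x_0\mapsto s^{a_n+j}$ and $x_i\mapsto s^{a_n - a_i}t^{a_i+j}$, so the relevant semigroup is generated by the columns of a matrix whose entries shift \emph{linearly} in $j$. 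Reducing the exponents modulo $a_n - a_1$ is what produces the period, so the heart of the matter is to show that for $j$ large the combinatorial type of the minimal free resolution stabilizes along each residue class of $j$ modulo $a_n-a_1$.

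First I would set up the ambient semigroup ring $S$ over $\ZZ[t]$ (or over a localization at $t$) whose fiber at $t=j$ is $\bar I(\a+j)$, making the family of ideals flat in an appropriate sense — essentially viewing $\{\bar I(\a+j)\}_j$ as living in a single Noetherian ring after a change of grading. Second, I would prove a \emph{uniform bound} on the degrees of the minimal generators and, more generally, on the regularity of $\bar I(\a+j)$, independent of $j$ (this is plausible because the defining ideal of a monomial curve in $\PP^n$ is Cohen–Macaulay of dimension $2$, hence its regularity is controlled by $h$-vector/Hilbert function data that grows predictably). Third, with degrees bounded, the graded Betti numbers $\beta_{i,k}(\bar I(\a+j))$ become, for each fixed homological degree $i$ and internal degree $k$, the rank of a matrix whose entries are determined by the semigroup combinatorics in a way that is \emph{eventually periodic} in $j$: increasing $j$ by $a_n-a_1$ permutes the relevant lattice points and the syzygy modules are isomorphic as graded vector spaces up to a degree shift. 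Finally, I would translate periodicity of the graded Betti table of $\bar I(\a+j)$ back to periodicity of the total Betti numbers $\beta_i(I(\a+j))$ via the hyperplane-section/Koszul comparison, checking that the shift does not destroy periodicity of the \emph{totals}.

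The main obstacle I anticipate is the second step: obtaining a bound on the generating degrees and regularity of $\bar I(\a+j)$ that is genuinely uniform in $j$. Bresinsky's example (cited in the introduction) shows the number of minimal generators of the \emph{affine} ideal $I(\a)$ can be unbounded over all sequences, so one must use crucially that here the sequence $\a$ is fixed and only the uniform shift $j$ varies — the translated curves form a very special one-parameter family, and I would exploit that the associated graded semigroup stabilizes, perhaps by analyzing the "holes" of the numerical semigroup generated by $a_1+j,\dots,a_n+j$ and showing they organize into finitely many arithmetic progressions in $j$. A secondary difficulty is bookkeeping the degree shifts: the period $a_n-a_1$ must emerge as precisely the amount by which the internal degrees of the resolution advance per unit increase in $j$, so I would need to verify that the $x_0$-degree and the $t$-degree contributions to each Betti number conspire to give exactly this period rather than a multiple of it.
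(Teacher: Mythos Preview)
Your overall strategy of passing to the homogenization $\bar I(\a+j)$ and then descending back to $I(\a+j)$ matches the paper. However, the central technical step you propose is false, and this is not a minor obstacle but a fatal one for your plan.

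You intend to prove a uniform bound on $\reg \bar I(\a+j)$ independent of $j$, justifying this by the claim that a projective monomial curve in $\PP^n$ is Cohen--Macaulay of dimension $2$. Neither assertion holds. Monomial curves in $\PP^n$ for $n\ge 3$ are typically \emph{not} arithmetically Cohen--Macaulay, so no such regularity control is available. More importantly, the paper shows the opposite of what you want: for $j>N$ one has $\reg \bar I(\a+j+b_1)=\reg \bar I(\a+j)+e$ with $e\ge 1$, so the regularity grows quasi-linearly in $j$ and is certainly unbounded. Consequently your third step, where bounded degrees would pin the Betti numbers to finitely many lattice configurations, collapses.

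What actually happens is more subtle. The graded Betti table of $\bar I(\a+j)$ does not stabilize; instead, for $j>N$ it \emph{separates} into two blocks. The low-degree block (degrees $\le n+\reg J(\a)$) is exactly the Betti table of the ideal $J(\a)$ generated by the homogeneous elements of $I(\a)$, and is independent of $j$. The high-degree block sits in degrees $>n+\reg J(\a)$ and, when $j$ is replaced by $j+b_1$, is translated rigidly by $e$ rows. Thus the \emph{graded} Betti numbers are not periodic, but the \emph{total} Betti numbers are. The paper establishes this shifting behaviour not via a flat family over $\ZZ[t]$ but by computing the multigraded Betti numbers as reduced homology of the squarefree divisor complexes $\Delta_{l,r}$ and proving directly that $\Delta_{l,r}(j)=\Delta_{l+e,\,r+eb_1}(j+b_1)$ whenever $l$ is large and the complex has nontrivial homology; the key structural input is a ``double cone'' description of these complexes.

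Finally, your passage from $\bar I(\a+j)$ back to $I(\a+j)$ is also too optimistic. Dehomogenization gives only $\b_i(I(\a+j))\le\b_i(\bar I(\a+j))$; the reverse inequality requires a separate argument comparing the complexes $\Delta_{l,r}$ with the affine complexes $\Delta_m$ for $m=lk-r$, again using the double cone structure. A generic ``hyperplane section/Koszul'' argument will not supply this equality.
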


To prove Theorem \ref{main} we first prove the eventual periodicity in $j$ for total betti numbers of the homogenization $\bar I(\a+j)$, and then prove the equalities for total betti numbers of $I(\a+j)$ and $\bar I(\a+j)$ when $j \gg 0$. 

To simplify notation, for each $i$, $1 \le i \le n$, let $b_i = a_n - a_i$. Note that if $f$ is homogeneous then $f \in I(\a)$ if and only if $f \in I(\a+j)$ for all $j$. Denote by $J(\a)$ the ideal generated by homogeneous elements of $I(\a)$. In general, for each finitely generated graded $R$-module $M$, $\Tor_i^R(M,K)$ is a finitely generated graded module for each $i$. Let 
$$\b_{ij}(M) = \dim_K \Tor_i^R(M,K)_j$$
be the $i$-th graded betti number of $M$ in degree $j$. Moreover, let
$$\reg M = \sup_{i,j}\{j - i: \b_{ij} \neq 0\}$$
be the Castelnuovo-Mumford regularity of $M$. 

In Lemma \ref{separation}, we prove that when $j > b_1 (n + \reg J(\a))$, each binomial in $\bar I(\a + j)$ involving $x_0$ has degree greater than $n + \reg J(\a)$. Thus the betti table of $\bar I(\a + j)$ separates into two parts. One part is the betti table of $J(\a)$ which lies in degree at most $n + \reg J(\a)$. The other part lies in degree larger than $n + \reg J(\a)$. We call the part of betti table of $\bar I(\a+ j)$ lying in degree larger than $n + \reg J(\a)$ the high degree part. We will prove that when $j \gg 0$, the betti table of $\bar I(\a + j + b_1)$ is obtained from the betti table of $\bar I(\a+j)$ by shifting the high degree part of $\bar I(\a + j)$ by certain rows (see Theorem \ref{Per}). 

\begin{Example}\label{Ex1} Let $\a = (1,2,3,7,10)$. A computation in Macaulay2 shows that the betti tables of $\bar I(\a+49)$ and $\bar I(\a+58)$ are as follows:
\begin{center}
\begin{tabular}{c | c c c c c c c c |c c c c c}
	& $0$ 		&$1$		&$2$ 		&$3$	&	&	& 	& 	& $0$ 		&$1$		&$2$ 		&$3$		\\ 	
\cline{1-5} \cline{9-13}
$2$ 	& $1$		&$-$ 		&$-$		&$-$	&	&	&	& $2$ 	& $1$		&$-$ 		&$-$		&$-$	\\
$3$ 	& $6$ 		&$8$		&$1$ 		&$-$	&	&	&	& $3$ 	& $6$ 		&$8$		&$1$ 		&$-$\\
$4$ 	&$-$ 		&$2$ 		&$4$ 		&$1$	&	&	&	& $4$ 	&$-$ 		&$2$ 		&$4$ 		&$1$\\
$5$ 	&$-$		&$-$ 		&$-$		&$-$	&	&	&	& $5$ 	&$-$		&$-$ 		&$-$		&$-$\\
$6$ 	&$-$ 		&$-$ 		&$-$ 		&$-$	&	&	&	& $6$ 	&$-$ 		&$-$ 		&$-$ 		&$-$\\
$7$ 	&$2$ 		&$1$ 		&$-$ 		&$-$	&	&	&	& $7$ 	&$-$ 		&$-$ 		&$-$ 		&$-$\\
$8$ 	&$1$		&$11$ 	&$13$		&$3$	&	&	&	& $8$ 	&$2$ 		&$1$ 		&$-$ 		&$-$\\
$9$ 	&$-$ 		&$-$ 		&$-$ 		&$1$	&	&	&	& $9$ 	&$1$		&$11$ 	&$13$		&$3$\\
 	& 		& 		& 		&	&	&	&	& $10$ 	&$-$ 		&$-$ 		&$-$ 		&$1$\\
\end{tabular}
\end{center}
where the entry in the column-index $i$ and row-index $j$ of each table represents the betti number $\b_{i,i+j}$ of the corresponding ideals. A dash represents $0$.
\end{Example}

To prove the shifting behaviour of the betti tables of $\bar I(\a + j)$ as well as the equalities of total betti numbers of $I(\a+j)$ and $\bar I(\a+j)$, we note that $\bar I(\a+j)$ and $I(\a+j)$ are defining ideals of certain semigroup rings. Moreover, by \cite[Proposition~1.1]{BH}, betti numbers of a semigroup ring can be given in term of homology groups of certain simplicial complexes associated to elements of the semigroup. Thus we reduce the problem to proving equalities among homology groups of these simplicial complexes.

Let $V$ be an additive semigroup generated by vectors $\v_1, ..., \v_n \in {\mathbb N}^m$. Let $K[V]$ be the semigroup ring generated by $V$. Let $I(V)$ be the defining ideal of $K[V]$ in $R = K[x_1, ..., x_n]$. 
\begin{Definition}[Squarefree divisor simplicial complex]\label{divisorcomplex} For each $\v\in V$, let $\Delta_\v$ be the simplicial complex on the vertices $\{1, ..., n\}$ such that $F \subseteq \{1, ..., n\}$ is a face of $\Delta_\v$ if and only if 
$$\v - \sum_{i\in F} \v_{i}\in V.$$
\end{Definition}

Note that $K[x_1, ..., x_n]$ is multi-graded with grading given by $\deg x_i = \v_i$. Under this grading, by \cite[Proposition 1.1]{BH}, (see also \cite[Theorem 1.2]{CM}), the betti numbers of $I(V)$ and the homology groups of $\Delta_\v$ are related by 
\begin{Theorem}\label{trans} For each $i$, and each element $\v \in V$, 
$$\b_{i,\v}(I(V)) = \b_{i+1,\v}(R/I(V)) = \dim_K \tilde H_i (\Delta_\v).$$
\end{Theorem}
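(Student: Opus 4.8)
The plan is to compute $\Tor_i^R(K[V],K)$ directly from the Koszul complex, keeping track of the $V$-grading, and to identify the resulting chain complex in each multidegree with an augmented simplicial chain complex. First I would record the reduction $\b_{i,\v}(I(V)) = \b_{i+1,\v}(R/I(V))$: from the short exact sequence $0 \to I(V) \to R \to R/I(V) \to 0$ one gets the long exact sequence in $\Tor^R(-,K)$, and since $\Tor_p^R(R,K) = 0$ for $p \geq 1$ one obtains $\Tor_i^R(I(V),K) \cong \Tor_{i+1}^R(R/I(V),K)$ for $i \geq 1$, while in degree $0$ a direct inspection of $0 \to \Tor_1^R(R/I(V),K) \to I(V)\otimes_R K \to K \to K \to 0$ (the last map being an isomorphism in internal degree $0$) gives the same identification for $i = 0$. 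So it suffices to prove $\Tor_{i+1}^R(R/I(V), K)_\v \cong \tilde H_i(\Delta_\v)$, where $R = K[x_1,\dots,x_n]$ carries the $V$-grading $\deg x_i = \v_i$ and $R/I(V) \cong K[V]$ as $V$-graded $K$-algebras.

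Next I would resolve $K$ by the multigraded Koszul complex $\mathbf K_\bullet$ on $x_1,\dots,x_n$, so that $\Tor_p^R(K[V],K) = H_p(\mathbf K_\bullet \otimes_R K[V])$. The module $\mathbf K_p \otimes_R K[V]$ is free over $K[V]$ with basis $\{e_F : F \subseteq \{1,\dots,n\},\ |F| = p\}$, where $e_F$ sits in multidegree $\v_F := \sum_{i\in F}\v_i$. Since $\dim_K K[V]_\w$ equals $1$ if $\w \in V$ and $0$ otherwise, the multidegree-$\v$ strand $(\mathbf K_p \otimes_R K[V])_\v$ has $K$-basis $\{t^{\v - \v_F} e_F : F\in\Delta_\v,\ |F| = p\}$ — that is, it is naturally the simplicial chain group $C_{p-1}(\Delta_\v;K)$, with the $p=0$ term $K[V]_\v = K$ (nonzero because $\v\in V$) playing the role of the augmentation term $C_{-1}$.

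The heart of the argument is identifying the differentials. The Koszul differential sends $e_F$ to $\sum_{k\in F}\operatorname{sgn}(k,F)\,x_k\,e_{F\setminus k}$; applied to $t^{\v-\v_F}e_F$ it gives $\sum_{k\in F}\operatorname{sgn}(k,F)\,t^{\v-\v_{F\setminus k}}e_{F\setminus k}$, and since $\Delta_\v$ is a simplicial complex every face $F\setminus k$ of a face $F\in\Delta_\v$ again lies in $\Delta_\v$, so no term is killed. Under the identification $t^{\v-\v_F}e_F \leftrightarrow [F]$ this is, up to the chosen sign conventions, exactly the augmented simplicial boundary $\partial_{p-1}\colon C_{p-1}(\Delta_\v)\to C_{p-2}(\Delta_\v)$. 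Hence $\Tor_p^R(K[V],K)_\v \cong \tilde H_{p-1}(\Delta_\v)$, and taking $p = i+1$ finishes the proof.

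The main obstacle is bookkeeping rather than conceptual: one must fix orientations so that the Koszul signs $\operatorname{sgn}(k,F)$ agree with the incidence numbers of the simplicial boundary operator, and one must treat the two edge cases — the empty face (so that the reduced/augmented complex, not the ordinary one, appears) and the multidegree $\v = 0$ (where $\Delta_\v$ degenerates to $\{\emptyset\}$ and all $\tilde H_i$ vanish) — so that the statement is correct for every $i$. Once the grading and signs are pinned down, the rest is the standard computation of $\Tor$ against the Koszul resolution in a single multidegree.
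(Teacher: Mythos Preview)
Your argument is correct and is in fact the standard proof of this result: compute $\Tor^R_\bullet(K[V],K)$ as the homology of the Koszul complex tensored with $K[V]$, observe that the $V$-graded strand in degree $\v$ is exactly the augmented simplicial chain complex of $\Delta_\v$ shifted by one, and handle the shift $\b_{i,\v}(I(V))=\b_{i+1,\v}(R/I(V))$ via the long exact sequence coming from $0\to I(V)\to R\to R/I(V)\to 0$. The sign and edge-case bookkeeping you flag (matching Koszul signs with simplicial incidence numbers, the role of the empty face as the augmentation, and the trivial case $\v=0$) is exactly what needs to be checked, and your treatment of it is fine.

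The only thing to point out is that the paper does not give its own proof of this theorem: it is quoted from \cite[Proposition~1.1]{BH} (see also \cite[Theorem~1.2]{CM}), so there is no ``paper's proof'' to compare against. Your write-up is essentially the argument one finds in those references, so you are not doing anything different from the intended proof --- you are supplying it.
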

In our situation, betti numbers of $\bar I(\a+j)$ can be expressed in terms of homology groups of squarefree divisor simplicial complexes $\Delta_{l,r}$ (defined in section \ref{doublecone}). In section \ref{doublecone}, we prove that if $l > n + \reg J(\a)$ and $\Delta_{l,r}$ has non-trivial homology groups, then $\Delta_{l,r}$ is a double cone. By a double cone, we mean the union of two cones. From that, we derive the equalities of homology groups among these squarefree divisor simplicial complexes. The following example illustrates the double cone structure on $\Delta_{l,r}$. 

\begin{Example} Let $\a = (1,2,3,7,10)$. We consider the betti numbers of $\bar I(\a+49)$. The betti table in Example \ref{Ex1} shows that $\b_{1,9}(\bar I(\a+49))$ and $\b_{2,10}(\bar I(\a + 49))$ are nonzero. A more precise computation in Macaulay2 with multi-grading shows that $\Delta_{9,73}$ and $\Delta_{10,83}$ contribute to the betti numbers $\b_{1,9}$ and $\b_{2,10}$ of $\bar I(\a + 49)$ respectively. The complex $\Delta_{9,73}$ is the simplicial complex on the vertices $\{0, ..., 5\}$ with facets $0524, 053, 124, 134$, which is the double cone $\{05\}* \{3,24\}\cup \{1\}*\{24,34\}$. Also, $\Delta_{10,83}$ is the simplicial complex on the vertices $\{0, ..., 5\}$ with facets $0514, 05123, 0534, 124, 134,$ which is the double cone $\{05\}*\{14,123,34\} \cup \{1\} *\{24,34\}.$ The picture for these simplicial complexes are given in the following where we have identified $0$ and $5$.
\begin{center}
\begin{tikzpicture}[join=round]
    \tikzstyle{conefill} = [fill=blue!20,fill opacity=0.8]
    \tikzstyle{ghostfill} = [fill=white]	
    \filldraw[conefill](2,1.3)--(1.6,-.5)--(1,0)--cycle;
    \filldraw[conefill](1.6,-.5)--(2,-1.3)--(1,0)--cycle;
    \filldraw[conefill](1.6,-.5)--(2,-1.3)--(3,0)--cycle;
    \filldraw[conefill](2,1.3)--(3,0);
    \node at (2,-1.45) {$1$};
    \node at (0.9,0) {$2$};
    \node at (1.5,-.6) {$4$};
    \node at (2,1.45) {$05$};
    \node at (3.1,0) {$3$};
    \begin{scope}[xshift=3.5cm]
    \filldraw[conefill](2,1.3)--(3,0)--(2,-1.3)--(1,0)--cycle;
    \filldraw[conefill](2,1.3)--(1.6,-.5)--(2,-1.3)--cycle;
    \filldraw[conefill](3,0)--(2,-1.3)--(1.6,-0.5)--cycle;
    \filldraw[conefill](1,0)--(2,-1.3)--(1.6,-0.5)--cycle;
    \filldraw[conefill](2,1.3)--(3,0)--(1.6,-0.5)--cycle;
%    \filldraw[ghostfill](1,0)--(2,1.3)--(1.6,-0.5)--cycle;
    \node at (2,-1.45) {$1$};
    \node at (0.9,0) {$2$};
    \node at (1.5,-.6) {$4$};
    \node at (2,1.45) {$05$};
    \node at (3.1,0) {$3$};
    \end{scope}
\end{tikzpicture}
\end{center}
\end{Example}

Note that the separation of betti tables of $\bar I(\a+j)$ happens as long as $j > b_1 (n+ \reg J(\a))$. It is natural to expect that the periodicity of the betti table of $\bar I(\a + j)$ begins when $j > b_1 (n+\reg J(\a))$. Experiments suggest that this is correct. Nevertheless our current method will only give a slightly larger bound for the place when the periodicity happens as follows.

Let $d = \gcd(b_1, ...,b_{n-1})$ be the greastest common divisor of $b_1, ..., b_{n-1}$. Let $c$ be the conductor of the numerical semigroup generated by $b_1/d, ..., b_n/d$ (see \cite{RS} for more details). Let $B = \sum_{i=1}^n b_i + n + d$. Let 
\begin{equation}\label{Eq1}
N = \max \left \{b_1 (n+\reg J(\a)), b_1b_2\left  ( \frac{dc +b_1}{b_{n-1}} + B\right ) \right \}.
\end{equation}

Fix $j > N$. Let $k = a_n + j$. Let $e = d/\gcd(d,k)$. In the case $l > n + \reg J(\a)$, we prove that, for each pair $(l,r)$ whose $\Delta_{l,r}(j)$ has non-trivial homology groups then $\Delta_{l,r}(j) = \Delta_{l+e,r+eb_1}(j+b_1)$ proving that the betti numbers of $\bar I(\a+j)$ and $\bar I(\a+j+b_1)$ are equal (see section \ref{proj} for more details).

Denote by $(\a+j)$ the semigroup generated by $a_1+j, ..., a_n+j$. For each pair $(l,r)$ corresponding to an element of $\overline{\a+j}$, $m = lk -r$ is an element of $(\a+j)$. In the case $l > n + \reg J(\a)$, we prove that if $\Delta_{l,r}$ has non-trivial homology groups then $\Delta_m$ is obtained from $\Delta_{l,r}$ by the deletion of the vertex $0$. The double cone structures on $\Delta_{l,r}$ and $\Delta_m$ show that they have the same homology groups, proving the equalities of betti numbers of $I(\a+j)$ and $\bar I(\a+j)$ (see section \ref{general} for more details). Consequently, we prove that the betti numbers of $I(\a+j)$ are periodic in $j$ with period $b_1$ when $j > N$. The technical condition \eqref{Eq1} will naturally arise in the proofs of lemmas throughout the paper. 

Finally, we consider the behaviour of betti numbers of $I(\a + j)$ in the case $\a$ is a Bresinsky's sequence. In this case, we prove in Proposition \ref{Bre} that the period $b_1$ is exact. 

The paper is organized as follows. In section \ref{notation}, we recall various notation in the introduction which will be used throughout the paper. In section \ref{doublecone}, we prove the double cone structure on squarefree divisor simplicial complexes $\Delta_{l,r}$ when $l > n + \reg J(\a)$ and $\Delta_{l,r}$ has non-trivial homology groups. In section \ref{proj}, applying the double cone structure, we prove equalities among squarefree divisor simplicial complexes associated to $\bar I(\a + j)$ and $\bar I(\a + j + b_1)$. As a corollary, we have a description of the betti table of $\bar I(\a + j)$ when $j \gg 0$. Finally, in section \ref{general}, applying the double cone structure, we prove the relation between squarefree divisor simplicial complexes associated to $I(\a + j)$ and $\bar I(\a + j)$. As a consequence, we prove our main theorem. Finally, we prove that the period estimated in the main theorem is sharp for the examples considered by Bresinsky in \cite{B}.

%%%%%%%%%%%%%%%%%%%%
%%%%%%%%%%%%%%%%%%%%%5
%%%%%%%%%%%%%%%%%%%%%%%
\section{Preliminaries}\label{notation}
The following notation and facts will be used throughout the paper. 
\begin{itemize}
\item For each $i=1, ..., n$, let $b_i = a_n-a_i$. In particular, $b_n = 0$. \\
\item Let $d$ be the greatest common divisor of $b_1, ..., b_{n-1}$. \\
\item Let $c$ be the conductor of the semigroup generated by $b_1/d, ..., b_{n-1}/d$.\\
\item Let $B =  \sum_{i=1}^n b_i + n +d$. \\
\item Let $N = \max \left \{b_1 (n+\reg J(\a)), b_1b_2\left  ( \frac{dc +b_1}{b_{n-1}} + B\right ) \right \}.$\\
\item Fix $j > N$. Let $k = a_n + j$. Let $e = d/\gcd(d,k)$. Note that $d|b_1$, so $e = d/\gcd(d,k+b_1)$. 
\end{itemize}

Since $k > N$, it follows that $k$ satisfies
\begin{equation}\label{reg} 
k/b_1 > n+\reg J(\a),
\end{equation}
and 
\begin{equation}\label{Eq2.1}
\frac{k}{b_1b_2} > \frac{dc + b_1}{b_{n-1}} + B.
\end{equation}

We will use the following notation when dealing with faces of simplicial complexes on the vertices $\{0, ..., n\}$. If $F\subseteq \{0, ..., n\}$, let $|F|$ be the cardinality of $F$. Moreover, for $i \in \{0, ..., n\}$, let
$$\delta_{iF} = \begin{cases} 0 & \text{ if } i\notin F,\\
  1 & \text{ if } i\in F.\end{cases}$$

Finally, the following representation of a natural number will be used frequently in the paper. Let $u$ be a natural number such that $u$ is divisible by $d$ and $u \ge dc$. We can write $u = tb_1 + v$ for some natural number $t$ and $v$ such that $dc \le v < dc + b_1$. Because $d|b_1$, it follows that $d|v$. Since $v/d \ge c$, the conductor of the numerical semigroup generated by $b_1/d, ..., b_{n-1}/d$, we can write 
$$v/d = w_1 (b_1/d) + ... + w_{n-1}(b_{n-1}/d),$$
for non-negative integers $w_1, ..., w_{n-1}$. If we denote by $\bb = (b_1, ..., b_{n-1})^t$ and $\w = (w_1, ..., w_{n-1})^t$ the column vectors with coordinates $b_1, ..., b_{n-1}$ and $w_1, ..., w_{n-1}$ respectively, then we have the following representation of $u=tb_1 + v$ as 
\begin{equation}\label{rep}
u = t b_1 + \w \cdot \bb,
\end{equation}
where $\w \cdot \bb = \sum_{i=1}^{n-1} w_i b_i$ is the usual dot product of these two vectors. With this representation, if we denote by $|\w| = \sum_{i=1}^{n-1} w_i$ then
\begin{equation}\label{twbound}
t < \frac{u}{b_1} \text{ and } |\w| < \frac{dc+b_1}{b_{n-1}}.
\end{equation}

%%%%%%%%%%%%%%%%%%
%%%%%%%%%%%%%%%55
%%%%%%%%%%%%%%%%%%5
\section{Double cone structure on simplicial complexes $\Delta_{l,r}$}\label{doublecone}
In this section, we prove the double cone structure of the squarefree divisor simplicial complexes $\Delta_{l,r}$ defined below. 

Note that $\bar I(\a+j)$ is the defining ideal of the semigroup ring $K[\overline{\a+j}]$ where $\overline{\a+j}$ is the additive semigroup generated by vectors $\w_0 = (k,0),  \w_1 = (b_1,k-b_1), ..., \w_n = (0,k)$. Note that $|\w_i| = k$ for all $i = 0, ..., n$. Thus each element $\v$ of the semigroup $\overline{\a+j}$ corresponds uniquely to a pair $(l,r)$ where $l = |\v|/k$ and $r$ is the first coordinate of $\v$. The definition of squarefree divisor simplicial complex in Definition \ref{divisorcomplex} translates to 
\begin{Definition}\label{deltalr}
For each pair of natural numbers $(l,r)$, let $\Delta_{l,r}(j)$ be the simplicial complex on the vertices $\{0,...,n\}$ such that $F \subseteq \{0,...,n\}$ is a face of $\Delta_{l,r}(j)$ if and only if the equation 
$$y_0 k + \sum_{i=1}^n y_i b_i = r$$
has a non-negative integer solution $y = (y_0,..., y_n)$ such that $|y| = \sum_{i=0}^n y_i = l$ and $F \subseteq \supp y =\{i: y_i > 0\}$.
\end{Definition}

By Theorem \ref{trans}, if we consider $R[x_0]$ with standard grading $\deg x_i = 1$ then the graded betti numbers of $\bar I(\a + j)$ are given by
\begin{Proposition}\label{barj} For each $i$ and each $l$, we have
$$\b_{il}(\bar I(\a+j)) = \dim_K \Tor^{R[x_0]}_i(\bar I(\a+j),K)_l = \sum_{r \ge 0} \dim_K \tilde H_i (\Delta_{l,r}(j)).$$
\end{Proposition}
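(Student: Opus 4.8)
The plan is to deduce the statement directly from Theorem \ref{trans} by passing from the fine $\overline{\a+j}$-grading to the coarse standard grading on $R[x_0]$. As already noted, $\bar I(\a+j)$ is the defining ideal of the semigroup ring $K[\overline{\a+j}]$, where $\overline{\a+j}\subseteq {\mathbb N}^2$ is generated by $\w_0,\dots,\w_n$, and $|\w_i| = k$ for every $i$. Hence $\v \mapsto |\v|/k$ is a well-defined monoid homomorphism $\overline{\a+j}\to{\mathbb N}$ sending each generator to $1$; equivalently, the $\overline{\a+j}$-grading on $R[x_0]$ given by $\deg x_i = \w_i$ is a refinement of the standard grading $\deg x_i = 1$. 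Consequently, every $\overline{\a+j}$-graded $R[x_0]$-module $M$ satisfies $M_l = \bigoplus_{\v\in\overline{\a+j},\,|\v|=lk} M_\v$ for each $l$.

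Next I would apply this to $\Tor$. Since $\bar I(\a+j)$ is a $\overline{\a+j}$-graded ideal, its minimal free resolution over $R[x_0]$ may be chosen $\overline{\a+j}$-graded, so $\Tor_i^{R[x_0]}(\bar I(\a+j),K)$ is a $\overline{\a+j}$-graded $K$-vector space supported on $\overline{\a+j}$, and by the previous paragraph $\Tor_i^{R[x_0]}(\bar I(\a+j),K)_l = \bigoplus_{\v\in\overline{\a+j},\,|\v|=lk}\Tor_i^{R[x_0]}(\bar I(\a+j),K)_\v$. Taking dimensions gives $\b_{il}(\bar I(\a+j)) = \sum_{\v\in\overline{\a+j},\,|\v|=lk}\b_{i,\v}(\bar I(\a+j))$, and Theorem \ref{trans} identifies each summand with $\dim_K\tilde H_i(\Delta_\v)$.

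It then remains to match $\Delta_\v$ with $\Delta_{l,r}(j)$ and to extend the sum over all $r\ge 0$. Each $\v\in\overline{\a+j}$ with $|\v|=lk$ is determined by its first coordinate $r$, namely $\v = (r,\,lk-r)$, and conversely a pair $(l,r)$ arises in this way precisely when $\Delta_{l,r}(j)$ is nonvoid. Unwinding Definition \ref{divisorcomplex}: $F$ is a face of $\Delta_\v$ iff $\v - \sum_{i\in F}\w_i\in\overline{\a+j}$, i.e. iff there exist non-negative integers $y_0,\dots,y_n$ with $\sum_i y_i\w_i = \v$ and $y_i\ge 1$ for all $i\in F$; comparing first coordinates and total weights, this is exactly the defining condition of $\Delta_{l,r}(j)$ in Definition \ref{deltalr}, so $\Delta_\v = \Delta_{l,r}(j)$. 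For the remaining pairs $(l,r)$, which correspond to no element of $\overline{\a+j}$, the complex $\Delta_{l,r}(j)$ is the void complex (it does not even contain $\emptyset$), whose reduced homology vanishes in every non-negative degree; adjoining these zero terms yields $\b_{il}(\bar I(\a+j)) = \sum_{r\ge 0}\dim_K\tilde H_i(\Delta_{l,r}(j))$.

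This argument is essentially bookkeeping, and I do not anticipate a serious obstacle. The two points meriting care are that $\Tor$ genuinely splits along the fine grading and that this grading coarsens to the standard one — which is exactly where the constancy $|\w_i| = k$ is used — and the handling of pairs $(l,r)$ outside the semigroup, where one must note that $\Delta_{l,r}(j)$ is void rather than the irrelevant complex, so it contributes nothing to $\tilde H_i$ for $i\ge 0$.
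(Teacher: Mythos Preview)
Your argument is correct and follows exactly the route the paper intends: the paper states Proposition~\ref{barj} as an immediate consequence of Theorem~\ref{trans} (with no separate proof), relying on the observation that the $\overline{\a+j}$-grading refines the standard grading because $|\w_i|=k$ for all $i$. You have simply made explicit the bookkeeping---the coarsening of Tor along the grading homomorphism, the identification $\Delta_\v=\Delta_{l,r}(j)$, and the vanishing contribution from void complexes---that the paper leaves to the reader.
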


An easy consequence of inequality \eqref{reg} and Proposition \ref{barj} is the separation of the betti table of $\bar I(\a + j)$ when $j > N$.
\begin{Lemma}\label{separation} Assume that $j > N$. Any minimal binomial generator of $\bar I(\a + j)$ involving $x_0$ has degree greater than $n + \reg J(\a)$. In particular, any syzygy of $\bar I(\a + j)$ of degree at most $n + \reg J(\a)$ is a syzygy of $J(\a)$.
\end{Lemma}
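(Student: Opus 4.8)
The plan is to use Definition \ref{deltalr} together with inequality \eqref{reg} to control the degrees of faces of $\Delta_{l,r}(j)$ that contain the vertex $0$. A minimal binomial generator of $\bar I(\a+j)$ of degree $l$ involving $x_0$ corresponds, via Proposition \ref{barj} and Theorem \ref{trans}, to a nonzero class in $\tilde H_0(\Delta_{l,r}(j))$ for some $r$, witnessed by two points of $\overline{\a+j}$ lying in different connected components, at least one of which uses the generator $\w_0 = (k,0)$. Concretely, if $F = \{0\}$ is a face and some face $G$ with $0 \notin G$ lies in a different component, then there is a solution $y$ of $y_0 k + \sum_{i=1}^n y_i b_i = r$ with $|y| = l$ and $y_0 \ge 1$. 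First I would observe that any face $F$ of $\Delta_{l,r}(j)$ containing $0$ forces $r \ge k$ (since the corresponding solution has $y_0 \ge 1$, contributing $k$ to $r$), and more relevantly that the existence of the vertex $0$ as an \emph{isolated} contributor to nontrivial $\tilde H_0$ already pins down $l$.

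The key step is the following counting argument. Suppose $F$ is a face with $0 \in F$ and $|F| \le n + \reg J(\a)$; I want to derive a contradiction with the claim that the $0$-th homology sees $x_0$ in this degree, i.e. that $\{0\}$ sits in its own connected component of $\Delta_{l,r}(j)$. Given a solution $y$ with $y_0 \ge 1$, $|y| = l$, and $r = y_0 k + \sum_{i=1}^n y_i b_i$, I would try to modify $y$ by decreasing $y_0$ by $1$ and redistributing: replacing the term $k$ coming from one copy of $\w_0$ by a combination $\sum_{i=1}^{n-1} w_i b_i$ of the $b_i$'s using extra copies of the corresponding generators. This is exactly where the representation \eqref{rep} and the bound \eqref{twbound} enter: if $l$ is large enough — which is guaranteed by $k/b_1 > n + \reg J(\a)$ via \eqref{reg} once we note $r$ and $l$ are linked through $l k - r = m \in (\a+j)$ — then we have enough "room" in the coordinate count $|y| = l$ to absorb the $w_i$'s, producing a new solution $y'$ with $y_0' = y_0 - 1$, still $|y'| = l$, still summing to $r$, and whose support is connected to the old support through the shared nonzero coordinates. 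Iterating, one connects $\{0\}$ to a face not containing $0$, contradicting that $x_0$ appears in a minimal generator of degree $l \le n + \reg J(\a)$. Hence every binomial generator involving $x_0$ has degree $> n + \reg J(\a)$.

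For the "in particular" clause, I would argue that $J(\a)$ and $\bar I(\a+j)$ agree in all degrees $\le n + \reg J(\a)$: by definition $J(\a) \subseteq \bar I(\a+j)$ (homogeneous elements of $I(\a)$ remain in $I(\a+j)$ for every $j$, hence in its homogenization, and these are precisely the binomials not involving $x_0$), and by the first part $\bar I(\a+j)$ has no minimal generators involving $x_0$ in those degrees, so the truncations $\bar I(\a+j)_{\le n+\reg J(\a)}$ and $J(\a)_{\le n + \reg J(\a)}$ coincide. A syzygy of $\bar I(\a+j)$ of degree $\le n + \reg J(\a)$ is therefore a syzygy among generators all of degree $\le n + \reg J(\a)$, all of which lie in $J(\a)$; since $\reg J(\a)$ bounds the degrees of syzygies of $J(\a)$ as well, such a syzygy is accounted for by $J(\a)$.

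The main obstacle I anticipate is making the redistribution argument in the second paragraph precise enough to guarantee that the modified solution $y'$ stays \emph{connected} to $y$ in the simplicial-complex sense, rather than merely existing — one must check that at each step the supports of $y$ and $y'$ overlap in at least one vertex (so the corresponding faces lie in a common face, hence the same component of $\Delta_{l,r}(j)$), and that the bound \eqref{twbound} on $|\w|$ really is dominated by the slack in $l$. This is precisely the bookkeeping for which the bound $N$ in \eqref{Eq1} was engineered, and I expect the proof in the paper to invoke \eqref{reg} almost immediately and then run this absorption argument.
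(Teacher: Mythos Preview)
Your approach overcomplicates matters and the redistribution step in your second paragraph has a genuine gap, while the paper's proof is a two-line counting argument whose ingredients you already wrote down but then did not use. You correctly note that (i) any face of $\Delta_{l,r}(j)$ containing $0$ forces $r \ge k$, and (ii) for $\tilde H_0(\Delta_{l,r}(j))$ to be nonzero there must also be a face $G$ with $0 \notin G$. Observation (ii) alone finishes the proof: such a $G$ gives a nonnegative solution $z = (0,z_1,\dots,z_n)$ with $\sum_{i=1}^n z_i = l$ and $\sum_{i=1}^n z_i b_i = r$, and since $b_i \le b_1$ for all $i$ this yields $l \ge r/b_1 \ge k/b_1 > n + \reg J(\a)$ by \eqref{reg}. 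That is exactly the paper's argument (phrased there in terms of the two monomials of the binomial $f = x_0^{u_0}\cdots x_n^{u_n} - x_1^{v_1}\cdots x_n^{v_n}$: the $x_0$-free monomial gives $\sum v_i b_i = r \ge k$, hence $\deg f = \sum v_i \ge k/b_1$). Neither \eqref{rep} nor \eqref{twbound} is invoked; only \eqref{reg} is needed for this lemma, and the more elaborate part of the bound $N$ in \eqref{Eq1} is engineered for the later double-cone lemmas, not this one.

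Your redistribution argument, besides being unnecessary, does not work as stated. To ``replace the term $k$ coming from one copy of $\w_0$ by a combination $\sum_{i=1}^{n-1} w_i b_i$'' while preserving $|y| = l$ would force $\sum w_i = 1$, i.e.\ $k = b_i$ for some $i$, which is false. Allowing adjustment in the $y_n$ coordinate (where $b_n = 0$) does not save it either: you would first need $d \mid k$ even to express $k$ as a nonnegative combination of $b_1,\dots,b_{n-1}$, and there is no such hypothesis (indeed the whole point of the parameter $e = d/\gcd(d,k)$ later is that $d$ need not divide $k$). Your treatment of the ``in particular'' clause is fine and matches the paper's one-line dismissal.
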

\begin{proof} Assume that $f = x_0^{u_0} x_1^{u_1}...x_n^{u_n} - x_1^{v_1}...x_n^{v_n}$ is a minimal binomial generator of $\bar I(\a+j)$. By Definition \ref{deltalr} and Proposition \ref{barj}, we have 
$$u_0k + u_1b_1 + ... + u_nb_n = v_1 b_1 + ... + v_nb_n.$$
Since $u_0 > 0$, it follows that $v_1 b_1 +...+v_nb_n \ge k$. By inequality \eqref{reg},
$$\deg f = \sum_{i=1}^n v_i \ge k/b_1 > n + \reg J(\a).$$
The second part follows immediately.
\end{proof}

In this section, we will simply denote by $\Delta_{l,r}$ the simplicial complex $\Delta_{l,r}(j)$. We will prove that in the case $l >n+ \reg J(\a)$ and $\Delta_{l,r}$ has non-trivial homology groups, $\Delta_{l,r}$ is a double cone, the union of a cone over the vertices $\{0,n\}$ and another cone over the vertex $1$. 

Our first technical lemma says that the range for $r$ so that $\Delta_{l,r}$ has non-trivial homology groups is quite small.

\begin{Lemma}\label{Lem2.1} For any $l >n+ \reg J(\a)$, if $\Delta_{l,r}$ has non-trivial homology groups then $ek \le r < ek + dc +B$ and $l \ge r/b_1$. In particular, any solution $y = (y_0, ..., y_n)$ of the equation $y_0 k + y_1b_1+... + y_n b_n = r$ with $y_0 > 0$ satisfies $y_0 = e$.
\end{Lemma}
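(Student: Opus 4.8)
The plan is to bound the values of $r$ for which $\tilde H_*(\Delta_{l,r})\neq 0$ by exploiting that such $\Delta_{l,r}$ cannot be a cone, together with the divisibility structure coming from $d=\gcd(b_1,\dots,b_{n-1})$. First I would observe that if $\Delta_{l,r}$ has non-trivial homology, then it is not a cone over any single vertex; in particular there must be at least one solution $y$ of $y_0k+\sum_{i=1}^n y_ib_i=r$ with $|y|=l$ and $y_0>0$ (otherwise vertex $0$ would be in every facet, making $\Delta_{l,r}$ a cone over $0$ — here one needs to check that $0$ lies in $\Delta_{l,r}$ at all, which follows from $l\geq r/b_1$ once that is established, or can be arranged directly). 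For any such solution, reducing the equation $y_0k+\sum_{i=1}^n y_ib_i=r$ modulo $d$ and using $d\mid b_i$ for $i<n$ and $b_n=0$ gives $y_0k\equiv r\pmod d$; since $e=d/\gcd(d,k)$ is exactly the order of $k$ in $\ZZ/d\ZZ$ divided appropriately, $y_0$ is forced to lie in a fixed residue class modulo $e$, and I expect one can pin down $y_0=e$ by showing $y_0<2e$. This last point is where the inequality \eqref{Eq2.1} enters: if $y_0\geq 2e$ one can subtract off a suitable combination and produce a "smaller" solution, contradicting either non-coneness or a regularity bound.

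Next I would establish the range $ek\le r< ek+dc+B$. The lower bound $r\geq ek$ is immediate once $y_0=e$ is known, since $r=ek+\sum_{i=1}^{n-1}y_ib_i\geq ek$. For the upper bound, the idea is that if $r-ek$ were large (at least $dc$), then using the representation \eqref{rep} one could rewrite $r-ek=tb_1+\w\cdot\bb$ with $t\geq 1$, and then the solution with $y_0=e$ could be modified: trade $b_1$ worth of the $y_i$'s against one unit of $x_0$ versus $x_1$, or more precisely show that $\Delta_{l,r}$ becomes a cone over one of the vertices $0,n,1$ — contradicting non-trivial homology. Concretely, I expect the argument to show that when $r-ek\geq dc$, every facet either contains $0$ or contains $1$ in a way that forces a cone point; the bookkeeping uses $t<(r-ek)/b_1$ and $|\w|<(dc+b_1)/b_{n-1}$ from \eqref{twbound}, combined with $l>n+\reg J(\a)$ and the numerics packaged into $B$ and \eqref{Eq2.1}.

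Finally, $l\geq r/b_1$ should follow from the existence of \emph{any} solution $y$ with $|y|=l$: writing $r=y_0k+\sum y_ib_i\leq b_1\sum_{i=0}^n y_i\cdot(k/b_1)$ is too weak, so instead one uses that $\Delta_{l,r}$ non-empty forces a solution, hence $r=y_0k+\sum_{i=1}^{n-1}y_ib_i$ with all $b_i\leq b_1$ and $k\le$ — wait, $k$ is large, so rather: since there is a face, and in fact (once $y_0=e$) $r=ek+\sum_{i=1}^{n-1}y_ib_i$ with $\sum_{i=1}^{n-1}y_i\leq l-e$, and $ek\le (l-?)\cdot$ — the clean statement is $l\ge e + (r-ek)/b_1\ge r/b_1$ using $ek/b_1\le e\cdot k/b_1$ and \eqref{reg}; I would verify $e+(r-ek)/b_1\ge r/b_1$ reduces to $e\ge ek/b_1-?$, i.e. to $e(1-k/b_1)\geq -$ something, which holds comfortably since $k/b_1$ is huge — so actually the bound $l\ge r/b_1$ comes directly from $r=y_0k+\sum y_ib_i$ with $y_0k\ge y_0 b_1$ (as $k>b_1$) and $y_ib_i\le y_ib_1$, giving $r\le b_1|y|=b_1 l$. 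That is the honest one-line argument. The main obstacle I anticipate is the step forcing $y_0=e$ (equivalently ruling out $y_0\ge 2e$): this is the place where the somewhat mysterious quantity in \eqref{Eq1}/\eqref{Eq2.1} is genuinely needed, and getting the constant $dc+B$ exactly right in the upper bound for $r$ will require care with the representation \eqref{rep}.
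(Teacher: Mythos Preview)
Your proposal has the right ingredients but the logical flow is tangled in two places, and one of them is a genuine gap.

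\textbf{Existence of a solution with $y_0>0$.} You write that there must be such a solution ``otherwise vertex $0$ would be in every facet, making $\Delta_{l,r}$ a cone over $0$''. This is backwards: if no solution has $y_0>0$, then $0$ is not a vertex of $\Delta_{l,r}$ at all, and there is no a priori reason the complex on $\{1,\dots,n\}$ should be acyclic. The paper's argument is different and essential: if $0\notin\Delta_{l,r}$, then the syzygy supported on $\Delta_{l,r}$ is a syzygy of $J(\a)$; but $\reg J(\a)$ forces all such syzygies to live in total degree at most $n+\reg J(\a)$, contradicting $l>n+\reg J(\a)$. This is exactly Lemma~\ref{separation}, and it is the only place the hypothesis $l>n+\reg J(\a)$ is used to produce a face containing $0$. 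Your caveat that ``$0$ lies in $\Delta_{l,r}$ \dots\ follows from $l\ge r/b_1$'' is not correct either: $l\ge r/b_1$ has no bearing on whether a solution with $y_0>0$ exists.

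\textbf{The bound $l\ge r/b_1$.} Your ``honest one-line argument'' uses $y_0k\ge y_0b_1$, but that inequality points the wrong way for an \emph{upper} bound on $r$. The correct one-liner uses the \emph{other} solution, the one with $z_0=0$ (which exists because $\Delta_{l,r}$ is not a cone over $0$): then $r=\sum_{i=1}^{n-1} z_ib_i\le b_1\sum z_i=b_1l$. This solution with $z_0=0$ is also what gives $d\mid r$, whence $e\mid y_0$ for any solution with $y_0>0$.

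\textbf{Order of deductions.} You treat ``rule out $y_0\ge 2e$'' as the main obstacle and plan to attack it first. In the paper it comes last and is easy: once $r<ek+dc+B$ is established, any solution with $y_0>0$ satisfies $y_0\le r/k<e+1$, and since $e\mid y_0$ this forces $y_0=e$. The genuine work is the upper bound on $r$, and there your idea is close to the paper's: assuming $r\ge ek+dc+B$, one uses the representation \eqref{rep} for $r-ek-\sum_{i=1}^{n-1}b_i$ together with \eqref{Eq2.1} and $l\ge r/b_1$ to exhibit a solution with full support, so $\Delta_{l,r}$ is the whole simplex, not merely a cone over one vertex.
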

\begin{proof} Since $l > n + \reg J(\a)$, if $\Delta_{l,r}$ has non-trivial homology groups then it supports a nonzero syzygies of $\bar I(\a+j)$ in degree larger than $n + \reg J(\a)$. By Lemma \ref{separation}, $\Delta_{l,r}$ must have at least a facet containing $0$. By Definition \ref{deltalr}, the equation 
\begin{equation}\label{EL1.1}
y_0 k + y_1b_1 + ... + y_nb_n=r
\end{equation}
has a non-negative integer solution $y = (y_0, ..., y_n)$ such that $y_0 > 0$. Moreover, for $\Delta_{l,r}$ to have non-trivial homology groups, it must have at least a facet that does not contain $\{0\}$. Again, by Definition \ref{deltalr}, the equation \eqref{EL1.1} has a solution $z=(z_0, ..., z_n)$ such that $z_0 = 0$ and $\sum_{i=1}^n z_i = l$. This implies that $d |r$ and $l \ge r/b_1$. Therefore, we must have $d | y_0 k$ or $e|y_0$. Thus, $r \ge y_0 k \ge ek$. 

Now assume that $r \ge ek + dc + B$. Since $r - ek -\sum_{i=1}^{n-1} b_i > dc$, as in \eqref{rep}, we can write 
\begin{equation}\label{EL1.2}
r = ek + \sum_{i=1}^{n-1}b_i + tb_1 + \w \cdot \bb
\end{equation}
with 
\begin{equation}\label{EL1.3}
|\w| < \frac{dc+b_1}{b_{n-1}} \text{ and } t < \frac{r - ek}{b_1}.
\end{equation}

By inequality \eqref{Eq2.1}, it follows that
$$\frac{dc+b_1}{b_{n-1}} + B + \frac{r - ek}{b_1} < \frac{k}{b_1b_2} + \frac{r - ek}{b_1} \le \frac{r}{b_1} \le l.$$
Together with \eqref{EL1.3}, this implies
$$|\w| + e + n + t  < l.$$

Therefore, the equation \eqref{EL1.1} has a solution $u = (u_0, ..., u_n)$ such that $u_0 = e$, $u_1 = t + 1 + w_1$, $u_i = w_i +1$ for $i = 2, ..., n-1$ and $u_n = 1 +l - (t+e+n+|\w|)$. In particular, $u_i > 0$ for all $i$; consequently, by Definition \ref{deltalr}, $\Delta_{l,r}$ is the simplex $\{0, ..., n\}$ which has trivial homology groups. This is a contradiction. 

Finally, for any solution $y = (y_0, ..., y_n)$ of the equation \eqref{EL1.1} with $y_0 > 0$, we have 
$$y_0 \le \frac{r}{k} < \frac{ek + dc+B}{k} < e + 1.$$
Since $e|y_0$, it follows that $y_0 = e$.
\end{proof}

The following two lemmas will prove that if $l > n + \reg J(\a)$ and $\Delta_{l,r}$ has non-trivial homology groups then $\Delta_{l,r}$ has a structure of a double cone.

\begin{Lemma}\label{Lem2.2} Assume that $l >n+\reg J(\a)$ and $\Delta_{l,r}$ has non-trivial homology groups. If $F$ is a facet of $\Delta_{l,r}$, and $0\notin F$ then $1\in F$. 
\end{Lemma}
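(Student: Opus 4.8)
The plan is to argue by contradiction, exploiting the maximality of $F$ to exhibit a face strictly containing it. Let $F$ be a facet of $\Delta_{l,r}$ with $0\notin F$. Since the support of any non-negative integer solution $y$ of the equation $y_0k+\sum_{i=1}^n y_ib_i=r$ with $|y|=l$ is itself a face of $\Delta_{l,r}$, maximality forces $F=\supp y$ for any solution $y$ witnessing that $F$ is a face; in particular $y_0=0$. Assume toward a contradiction that $1\notin F$, so also $y_1=0$. Then
$$\sum_{i=2}^{n-1}y_ib_i=r\quad\text{and}\quad\sum_{i=2}^n y_i=l,$$
and since $r\ge ek>0$ by Lemma \ref{Lem2.1}, the set $F\cap\{2,\dots,n-1\}$ is non-empty. (When $n=2$ this set is empty and the displayed equation reads $0=r$, contradicting $r\ge ek>0$; so for $n=2$ there is simply no facet avoiding both $0$ and $1$ and the statement is vacuous.)

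Next I would contradict the maximality of $F$ by producing a solution $y'$ with $F\cup\{1\}\subseteq\supp y'$. Reserve one copy of $b_i$ for each $i\in F\cap\{2,\dots,n-1\}$ and set $s=r-\sum_{i\in F\cap\{2,\dots,n-1\}}b_i$. Routine estimates give $d\mid s$ (using $d\mid r$ from Lemma \ref{Lem2.1} and $d\mid b_i$) and $s\ge k-B>dc+b_1$ (using $r\ge ek\ge k$ and the size of $k$), so the representation \eqref{rep} applies and yields $s=tb_1+\w\cdot\bb$ with $t\ge1$ (because $s\ge dc+b_1$), $t<s/b_1$, and $|\w|<(dc+b_1)/b_{n-1}$ by \eqref{twbound}. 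Now set $y_1'=t+w_1$, $y_i'=w_i+\delta_{iF}$ for $2\le i\le n-1$, and $y_n'=l-(t+w_1)-\sum_{i=2}^{n-1}(w_i+\delta_{iF})$. One checks at once that $\sum_{i=1}^{n-1}y_i'b_i=s+\sum_{i\in F\cap\{2,\dots,n-1\}}b_i=r$, that $|y'|=l$, and that $y_1'\ge1$ and $y_i'\ge1$ for all $i\in F\cap\{2,\dots,n-1\}$. Hence, provided $y_n'\ge0$ (and $y_n'\ge1$ when $n\in F$), we get $F\cup\{1\}\subseteq\supp y'$, contradicting that $F$ is a facet.

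The one step requiring care — the main obstacle — is the positivity of $y_n'$, i.e. the inequality $t+|\w|+|F\cap\{2,\dots,n-1\}|+\delta_{nF}\le l$; the bound $l\ge r/b_1$ from Lemma \ref{Lem2.1} is not strong enough. The key observation is the sharper estimate $l\ge r/b_2$, valid because $r=\sum_{i=2}^{n-1}y_ib_i\le b_2\sum_{i=2}^{n-1}y_i\le b_2 l$ (since $b_2\ge b_3\ge\cdots\ge b_{n-1}$ and $\sum_{i=2}^{n-1}y_i\le\sum_{i=2}^n y_i=l$). Combining $t<s/b_1\le r/b_1$, $|\w|<(dc+b_1)/b_{n-1}$, and $|F\cap\{2,\dots,n-1\}|+\delta_{nF}\le n-1<B$, it suffices to know $r/b_2-r/b_1\ge(dc+b_1)/b_{n-1}+B$, which holds since $r/b_2-r/b_1\ge r/(b_1b_2)\ge ek/(b_1b_2)\ge k/(b_1b_2)$, and the latter exceeds $(dc+b_1)/b_{n-1}+B$ by inequality \eqref{Eq2.1}. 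This closes the argument; the remaining verifications (divisibility of $s$, the size bound on $s$, and the arithmetic identity $|y'|=l$) are routine.
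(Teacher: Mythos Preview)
Your proof is correct and follows essentially the same approach as the paper's: argue by contradiction, use the key observation $l\ge r/b_2$ (since $y_1=0$), apply the representation \eqref{rep} to $r-\sum_{i\in F}b_i$, and construct a new solution with $1$ in its support, verifying the positivity of the last coordinate via inequality \eqref{Eq2.1}. Your write-up is in fact a bit more careful than the paper's in handling the case $n\in F$ when defining $y_n'$, but the strategy and the estimates are the same.
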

\begin{proof} Assume that there exists a facet $F$ of $\Delta_{l,r}$ such that $0, 1\notin F$. By Definition \ref{deltalr}, the equation 
\begin{equation}\label{EL2.1}
y_1b_1 + ... + y_n b_n= r
\end{equation}
has a solution $y  = (y_1, ...,y_n)$ such that $\supp y = F$, $y_1 = 0$ and $\sum_{i=1}^n y_i = l.$ Thus $l\ge \frac{r}{b_2}$. As in \eqref{rep}, we can write 
$$r = \sum_{i \in F} b_i + tb_1+ \w \cdot \bb$$
with 
$$|\w| < \frac{dc+b_1}{b_{n-1}} \text{ and } t < \frac{r}{b_1}.$$
Together with inequality \eqref{Eq2.1}, this implies
$$|\w| +n + t < \frac{dc+b_1}{b_{n-1}} + B + \frac{r}{b_1} < \frac{k}{b_1b_2} + \frac{r}{b_1} \le \frac{r}{b_1b_2} + \frac{r}{b_1} \le \frac{r}{b_2} \le l.$$
Note that $t > 0$, since $b_1 + ... + b_{n-1} + dc + b_1 < k \le r$. Therefore, there is a solution $z = (z_1, ..., z_n)$ of the equation \eqref{EL2.1} such that $z_1 = t+w_1, z_i = \delta_{iF} + w_i$ for $i = 2, ..., n-1$, and $z_n = l - (|\w| + t + |F|)$. In particular, $\supp z \supsetneq \supp y$ which is a contradiction.
\end{proof}

\begin{Lemma}\label{Lem2.3} Assume that $l > n + \reg J(\a)$ and $\Delta_{l,r}$ has non-trivial homology groups. If $F$ is a facet of $\Delta_{l,r}$, and $0\in F$ then $n\in F$. 
\end{Lemma}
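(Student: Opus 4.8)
The plan is to mirror the argument of Lemma \ref{Lem2.2}, exchanging the roles of the vertex $1$ and the vertex $n$, but now exploiting the facet through $0$ that Lemma \ref{Lem2.1} guarantees, together with the key fact (also from Lemma \ref{Lem2.1}) that every solution $y$ of $y_0k+\sum_{i=1}^n y_ib_i=r$ with $y_0>0$ has $y_0=e$. Suppose for contradiction that $F$ is a facet of $\Delta_{l,r}$ with $0\in F$ but $n\notin F$. By Definition \ref{deltalr}, the equation
$$y_0k + y_1b_1 + \cdots + y_nb_n = r$$
has a non-negative integer solution $y=(y_0,\dots,y_n)$ with $\supp y = F$, $y_0 = e > 0$, $y_n = 0$, and $\sum_{i=0}^n y_i = l$. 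Since $y_0 = e$, the remaining coordinates satisfy $\sum_{i=1}^{n-1} y_ib_i = r - ek$ and $\sum_{i=1}^{n-1} y_i = l - e$.

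Next I would apply the representation \eqref{rep} to $u = r - ek - \sum_{i\in F\setminus\{0\}} b_i$, which is divisible by $d$ and, by the range $ek \le r < ek + dc + B$ from Lemma \ref{Lem2.1} together with $B = \sum b_i + n + d$, I must check it is $\ge dc$; in fact since $\Delta_{l,r}$ is not the full simplex and $0,\dots,n-1$ need not all lie in $F$, a cleaner route is to write $r - ek = tb_1 + \w\cdot\bb$ directly with $t < (r-ek)/b_1$ and $|\w| < (dc+b_1)/b_{n-1}$, after first subtracting off the contributions forced to be in $F$. Using \eqref{Eq2.1} in the form $k/(b_1b_2) > (dc+b_1)/b_{n-1} + B$ and the bound $l \ge r/b_1$ from Lemma \ref{Lem2.1}, I get
$$|\w| + e + n + t < \frac{dc+b_1}{b_{n-1}} + B + \frac{r-ek}{b_1} < \frac{k}{b_1b_2} + \frac{r-ek}{b_1} \le \frac{r}{b_1} \le l,$$
so there is enough room in the coordinate count to build a new solution $z$ with $z_0 = e$, with $z_1$ absorbing the $tb_1$ term plus $w_1$, with $z_i = \delta_{iF} + w_i$ for $2 \le i \le n-1$, and with $z_n = l - (e + t + |\w| + |F\setminus\{0\}|)$ strictly positive. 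Then $\supp z \supseteq \supp y \cup \{n\} \supsetneq F$, contradicting that $F$ is a facet.

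The main obstacle I anticipate is purely bookkeeping: making sure the quantity to which \eqref{rep} is applied is genuinely divisible by $d$ and at least $dc$, and that after reserving coordinates for the fixed support $F\setminus\{0\}$, for the $e$ copies at vertex $0$, and for the term $tb_1$, the leftover count $l - (e + t + |\w| + |F\setminus\{0\}|)$ is strictly positive so that $n$ really enters the support of $z$. Divisibility by $d$ follows because $d \mid r$ (shown in Lemma \ref{Lem2.1}), $d \mid k$ would be false in general but $d \mid ek$ since $e = d/\gcd(d,k)$ makes $ek$ a multiple of $d$, wait — more carefully, $ek = (d/\gcd(d,k))k$ is a multiple of $d$ iff $\mathrm{lcm}(d,k)$ divides $ek$, which holds; and $d \mid b_i$ for $i \le n-1$. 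The positivity is exactly where the two inequalities \eqref{reg} and \eqref{Eq2.1} packaged into $N$ are used, and the same slack that worked in Lemmas \ref{Lem2.1} and \ref{Lem2.2} works here. Once the contradiction is reached, the lemma follows, and combined with Lemma \ref{Lem2.2} it yields that $\Delta_{l,r}$ is the union of the cone over $\{0,n\}$ with the cone over $\{1\}$.
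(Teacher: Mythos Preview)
Your approach mirrors Lemma \ref{Lem2.2}, but the paper's proof takes a much shorter, direct route. The paper does not attempt to enlarge the facet at all: from $y_0=e$, $y_n=0$, and $\sum_{i=1}^{n-1}y_i=l-e$ it simply observes that
\[
r-ek \;=\; \sum_{i=1}^{n-1}y_ib_i \;\ge\; (l-e)\,b_{n-1}\;\ge\;\Bigl(\tfrac{r}{b_1}-e\Bigr)b_{n-1},
\]
and then uses \eqref{Eq2.1} (and $r\ge ek$) to bound the right side strictly above $dc+B$. That immediately contradicts the upper bound $r<ek+dc+B$ from Lemma \ref{Lem2.1}.

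Your argument has a genuine gap at exactly the point you flag as ``purely bookkeeping'': to invoke the representation \eqref{rep} for $r-ek$ (or for $r-ek-\sum_{i\in F\setminus\{0\}}b_i$) you must know this quantity is at least $dc$. A priori Lemma \ref{Lem2.1} only gives $0\le r-ek<dc+B$, so the lower bound is not free. The only way to obtain it is the size estimate above, namely $r-ek\ge (l-e)b_{n-1}$, and once you carry that estimate through with \eqref{Eq2.1} you get $r-ek>dc+B$ --- which is already the contradiction. In other words, filling your gap \emph{is} the paper's proof, after which the face-enlargement step you outline becomes unnecessary. (There is also a minor loose end in your construction of $z$: you need $z_1\ge\delta_{1F}$, not just $z_1=t+w_1$, and you haven't secured $t>0$ here the way Lemma \ref{Lem2.2} did.) The fix is simply to drop the attempt to build a bigger face and argue by size as the paper does.
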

\begin{proof} Assume that $F$ is a facet of $\Delta_{l,r}$ such that $0\in F$ and $n \notin F$. By Definition \ref{deltalr} and Lemma \ref{Lem2.1}, the equation 
$$ek + y_1b_1+... + y_nb_n = r$$
has a solution $y = (y_1, ..., y_n)$ such that $y_n = 0$ and $\sum_{i=1}^{n-1} y_i = l-e$. Moreover, by Lemma \ref{Lem2.1}, we have $l \ge \frac{r}{b_1}$. By inequality \eqref{Eq2.1}, it follows that
$$\left ( \frac{r}{b_1} - e \right ) b_{n-1} > \left ( \frac{k}{b_1}-e \right )b_{n-1} > \left ( \left ( \frac{dc+b_1}{b_{n-1}}+B \right )b_2-e \right ) b_{n-1} \ge dc+B.$$
Therefore, 
$$r = ek  + \sum_{i=1}^n y_i b_i \ge ek + \left ( \frac{r}{b_1} - e \right ) b_{n-1} > ek + dc + B,$$
which is a contradiction to Lemma \ref{Lem2.1}. Therefore, $n \in F$.
\end{proof}

One of the surprising consequences of the double cone structure is the following characterization of minimal inhomogeneous generators of $I(\a+j)$ when $j \gg 0$.

\begin{Corollary}\label{gens} Assume that $j > N$. Let $e = d/\gcd(d,a_n+j)$. Any minimal binomial inhomogeneous generator of $I(\a + j)$ is of the form 
$$x_1^u f - gx_n^v$$
where $u,v > 0$, $f$ and $g$ are monomials in the variables $x_2, ..., x_{n-1}$, and moreover, $u + \deg f = v + \deg g + e.$ 
\end{Corollary}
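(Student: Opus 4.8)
The plan is to read off the structure of an inhomogeneous binomial generator directly from the double cone structure established in Lemmas \ref{Lem2.2} and \ref{Lem2.3}, via the dictionary between binomials in $I(\a+j)$ and binomials in $\bar I(\a+j)$. First I would recall that a minimal binomial generator of $I(\a+j)$ is obtained from a minimal binomial generator of $\bar I(\a+j)$ by dehomogenization (setting $x_0=1$): the inhomogeneous generators of $I(\a+j)$ correspond exactly to the minimal binomial generators of $\bar I(\a+j)$ that involve $x_0$. So let $f = x_0^{u_0}x_1^{u_1}\cdots x_n^{u_n} - x_1^{v_1}\cdots x_n^{v_n}$ be such a generator of $\bar I(\a+j)$, giving the inhomogeneous generator $x_1^{u_1}\cdots x_n^{u_n} - x_1^{v_1}\cdots x_n^{v_n}$ of $I(\a+j)$. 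By Lemma \ref{separation}, $\deg f = l > n + \reg J(\a)$, so the hypotheses of Lemmas \ref{Lem2.1}, \ref{Lem2.2}, \ref{Lem2.3} apply to the relevant complex $\Delta_{l,r}$ with $r$ the first coordinate of the corresponding semigroup element.

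The key point is that a minimal binomial generator is exactly an edge (a $1$-dimensional nonzero homology class of $\tilde H_1$, contributing to $\b_1$) supported on $\Delta_{l,r}$, so $\Delta_{l,r}$ has non-trivial homology groups; the two monomials of $f$ correspond to two facets (maximal solutions) of $\Delta_{l,r}$, one containing $0$ (the monomial with $x_0^{u_0}$) and one not containing $0$ (the monomial $x_1^{v_1}\cdots x_n^{v_n}$). By Lemma \ref{Lem2.3} applied to the facet containing $0$, we get $n \in \supp(x_0^{u_0}x_1^{u_1}\cdots x_n^{u_n})$, i.e. $u_n > 0$; this is the $x_n^v$ with $v = u_n > 0$. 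By Lemma \ref{Lem2.2} applied to the facet not containing $0$, we get $1 \in \supp(x_1^{v_1}\cdots x_n^{v_n})$, i.e. $v_1 > 0$; this is the $x_1^u$ with $u = v_1 > 0$. For the two monomials to form a \emph{minimal} binomial generator they must have disjoint "extra" structure — more precisely, after cancelling common factors, the supports of the two sides are disjoint, so $x_1$ does not divide the first monomial (hence $u_1 = 0$) and $x_n$ does not divide the second monomial (hence $v_n = 0$); writing $g$ for the $x_2,\dots,x_{n-1}$-part of the first monomial and $f$ for the $x_2,\dots,x_{n-1}$-part of the second monomial puts the generator in the stated form $x_n^v g - x_1^u f$ (up to relabelling $f \leftrightarrow g$ to match the statement's orientation, and noting $f, g$ are monomials in $x_2,\dots,x_{n-1}$).

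Finally, for the degree relation, I would use Lemma \ref{Lem2.1}: any non-negative integer solution $y$ of $y_0 k + \sum y_i b_i = r$ with $y_0 > 0$ satisfies $y_0 = e$. The monomial $x_0^{u_0}x_1^{u_1}\cdots x_n^{u_n}$ corresponds to such a solution with $y_0 = u_0$, so $u_0 = e$. Dehomogenizing, the inhomogeneous binomial is $x_n^v g - x_1^u f$ where the homogeneous one had total degree $l = u_0 + u_n + \deg g = u_0 + v_1 + \deg f$ on the two sides (both sides of a homogeneous binomial have the same degree $l$); subtracting the $x_0$-exponent contribution, the dehomogenized sides have degrees $v + \deg g$ and $u + \deg f$ respectively, differing by $u_0 = e$, i.e. $u + \deg f = v + \deg g + e$. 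The main obstacle I anticipate is the minimality bookkeeping in the middle paragraph: one must argue carefully that a minimal binomial generator cannot be written with $x_1$ dividing its "cone-over-$0$" monomial nor $x_n$ dividing its other monomial — this should follow from the observation that dividing both monomials of a binomial in $I(\a+j)$ by a common monomial variable again yields a binomial in $I(\a+j)$, contradicting minimality, combined with the facet/support identification from Definition \ref{deltalr}. Everything else is a direct translation through Proposition \ref{barj} and the three lemmas.
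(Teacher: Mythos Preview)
Your approach is exactly the paper's: read off the form of a minimal homogeneous generator of $\bar I(\a+j)$ involving $x_0$ from Lemma~\ref{Lem2.1} and the double cone lemmas, then dehomogenize. The paper's proof is a two-line sketch; you have supplied more detail, but there is one slip and one genuine gap.

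The slip: minimal generators contribute to $\beta_0(\bar I(\a+j))=\dim\tilde H_0(\Delta_{l,r})$, not $\tilde H_1$. The relevant structure is that $\Delta_{l,r}$ is disconnected and the supports of the two monomials lie in \emph{different connected components}.

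The gap: the supports $\supp u,\supp v$ of the two monomials are faces of $\Delta_{l,r}$, but there is no reason they should be facets, so Lemmas~\ref{Lem2.2} and~\ref{Lem2.3} do not apply to them as stated. For $u_n>0$ this does not matter, since the proof of Lemma~\ref{Lem2.3} never actually uses maximality---any solution with $y_0>0$ and $y_n=0$ already forces $r>ek+dc+B$. But the proof of Lemma~\ref{Lem2.2} \emph{does} use maximality (that is where the contradiction comes from), so establishing $v_1>0$ needs one more step. From $u_n>0$ and disjointness you get $v_n=0$; if in addition $v_1=0$, run the construction inside the proof of Lemma~\ref{Lem2.2} on the solution $v$ to produce a solution $z$ with $\supp z\supseteq\supp v$ and $z_n>0$. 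Then $\supp v$ and $\supp u$ both lie in the connected component of the vertex $n$, contradicting minimality of the binomial. The obstacle you flagged at the end (showing $u_1=0$ and $v_n=0$) is in fact the easy direction---it follows immediately from disjoint supports once $v_1>0$ and $u_n>0$ are in hand. (Your degree bookkeeping also contains a small typo: the side without $x_0$ has total degree $v_1+\deg f$, not $u_0+v_1+\deg f$; your final relation $u+\deg f=v+\deg g+e$ is nonetheless correct.)
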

\begin{proof} By Lemma \ref{Lem2.1} and the double cone structure, any minimal binomial homogeneous generator of $\bar I(\a +j)$ involving an $x_0$ has the form
$$x_1^u f - x_0^e g x_n^v$$
where $u,v > 0$ and $f, g$ are monomials in $x_2, ..., x_{n-1}$. The corollary follows since any minimal binomial inhomogeneous generator of $I(\a + j)$ is obtained from dehomogenization of a binomial homogeneous generator of $\bar I(\a + j)$ involving an $x_0$.
\end{proof}

%%%%%%%%%%%%%%%%%%%%%
%%%%%%%%%%%%%%%%%%%%%
\section{Periodicity of betti numbers of projective monomial curves}\label{proj}

The main result of this section is Theorem \ref{Per} where we prove that when $j > N$, the betti table of $\bar I(\a + j+b_1)$ is obtained from the betti table of $\bar I(\a + j)$ by shifting the high degree part by $e$ rows.  As in section \ref{doublecone}, we denote by $\Delta_{l,r}(j)$ the squarefree divisor simplicial complexes associated to elements of the semigroup $\overline{\a + j}$ and $\Delta_{l,r}(j+b_1)$ the squarefree divisor simplicial complexes associated to elements of the semigroup $\overline{\a+j+b_1}$. 

As an appliciation of the double cone structure, we will prove that if $l > n + \reg J(\a)$ and $\Delta_{l,r}$ has non-trivial homology groups, then $\Delta_{l,r}(j) = \Delta_{l+e,r+eb_1}(j+b_1)$ as simplicial complexes. First we prove that if $l > n + \reg J(\a)$ and $\Delta_{l,r}(j)$ has non-trivial homology groups then $l$ is controlled in a small range by $r$.

\begin{Lemma}\label{Lem3.1} If $l > n + \reg J(\a)$ and $\Delta_{l,r}(j)$ has non-trivial homology groups, then 
$$l < \frac{r}{b_1} + \frac{dc +b_1}{b_{n-1}} + n.$$
\end{Lemma}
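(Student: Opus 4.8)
The idea is that non-trivial homology forces $\Delta_{l,r}(j)$ to be a proper (non-full) simplicial complex, and if $l$ were too large relative to $r$ then the divisor equation would have a fully-supported solution, making $\Delta_{l,r}(j)$ the full simplex $\{0,\dots,n\}$ and hence acyclic. So I would argue by contradiction: assume $l \ge r/b_1 + (dc+b_1)/b_{n-1} + n$ and produce a solution $y=(y_0,\dots,y_n)$ of $y_0 k + \sum_{i=1}^n y_i b_i = r$ with $|y| = l$ and $y_i > 0$ for all $i$, which by Definition \ref{deltalr} shows $\{0,\dots,n\}$ is a face, i.e. $\Delta_{l,r}(j)$ is the full simplex, contradicting non-triviality of $\tilde H_\ast$.

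The construction of the fully-supported solution mirrors what is done in Lemma \ref{Lem2.1}. By Lemma \ref{Lem2.1}, since $\Delta_{l,r}(j)$ has non-trivial homology we already know $ek \le r < ek + dc + B$ and that $d \mid r$; in particular $r \ge ek$. First I would set aside one copy of each $b_i$ (for $i = 1, \dots, n-1$) and $e$ copies contributing $ek$, leaving the quantity $r - ek - \sum_{i=1}^{n-1} b_i$, which is divisible by $d$ and (using $r \ge ek$ together with the size bounds, exactly as in the proof of Lemma \ref{Lem2.1}) at least $dc$ — actually here I should be a little careful, since we are not assuming $r \ge ek + dc + B$; instead the bound $l \ge r/b_1 + (dc+b_1)/b_{n-1} + n$ is what drives the argument. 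Following the representation \eqref{rep}, write $r - ek = t b_1 + \w\cdot\bb$ (possibly after first absorbing the $\sum_{i=1}^{n-1} b_i$ term as in \eqref{EL1.2}) with $|\w| < (dc+b_1)/b_{n-1}$ and $t < (r-ek)/b_1 \le r/b_1$. Then the proposed solution is $u_0 = e$, $u_1 = 1 + t + w_1$, $u_i = 1 + w_i$ for $2 \le i \le n-1$, and $u_n = l - (e + n - 1 + t + |\w|)$. We have $|u| = l$ by construction, and $u_i > 0$ for $i < n$ is clear; the point is that
$$u_n = l - e - (n-1) - t - |\w| > \left(\frac{r}{b_1} + \frac{dc+b_1}{b_{n-1}} + n\right) - e - (n-1) - \frac{r}{b_1} - \frac{dc+b_1}{b_{n-1}} > 0,$$
using $e \le 1 \cdot$ something small relative to the slack $+1$ coming from $n$ versus $n-1$ — more precisely $u_n \ge 1 + (\frac{dc+b_1}{b_{n-1}} - |\w|) + (\frac{r}{b_1} - t) - e \ge 1 - e$; if $e = 1$ this already gives $u_n \ge 1$, and for $e > 1$ one uses that the two parenthesized gaps are strictly positive and that $r \ge ek$ is large, so the slack comfortably absorbs $e$. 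I would double-check this last inequality carefully, since it is where the precise form of the bound $(dc+b_1)/b_{n-1} + n$ in the statement gets used.

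The main obstacle is bookkeeping: getting the constant exactly right so that $u_n \ge 1$, i.e. verifying that the amount by which $l$ exceeds $r/b_1$ is enough to cover $e$, the $n-1$ base copies of the middle variables, the $|\w|$ overshoot, and the $t$ copies of $b_1$, all simultaneously. This is the same type of estimate appearing in Lemmas \ref{Lem2.1}–\ref{Lem2.3}, so I expect it to go through with the representation \eqref{rep} and the running hypotheses \eqref{reg}, \eqref{Eq2.1}; no genuinely new idea should be needed, just a careful accounting of which $+1$'s are available.
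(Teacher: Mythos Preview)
Your approach has a genuine gap. You aim to show that if $l \ge r/b_1 + (dc+b_1)/b_{n-1} + n$ then the full simplex $\{0,1,\dots,n\}$ is a face of $\Delta_{l,r}(j)$, by producing a solution $u=(u_0,\dots,u_n)$ with every $u_i \ge 1$. Since Lemma~\ref{Lem2.1} forces $u_0 = e$, this requires $\sum_{i=1}^{n-1} u_i b_i = r - ek$ with $u_i \ge 1$ for $i=1,\dots,n-1$, and in particular $r - ek \ge \sum_{i=1}^{n-1} b_i$. But Lemma~\ref{Lem2.1} only gives $0 \le r - ek < dc + B$, and nothing prevents $r - ek$ from being smaller than $\sum_{i=1}^{n-1} b_i$ (indeed $r = ek$ itself is not excluded). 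In that case no fully-supported solution with $u_0 > 0$ exists at all, the full simplex is \emph{not} a face, and your contradiction never materializes. The assumed lower bound on $l$ does not help: it lets you absorb arbitrarily many copies of $b_n = 0$ into $u_n$, but it cannot manufacture positive weight on $b_1,\dots,b_{n-1}$ out of a residue $r - ek$ that is too small. The same obstruction blocks your appeal to the representation \eqref{rep}, which requires the quantity being decomposed to be at least $dc$; you flagged this worry yourself, and it is fatal rather than cosmetic.

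The paper sidesteps this by not insisting on $y_0 > 0$. Non-trivial homology means $\Delta_{l,r}(j)$ is not a cone over $n$, so there is a facet $F$ with $n \notin F$; by Lemma~\ref{Lem2.3} also $0 \notin F$. The corresponding solution has $y_0 = 0$, so the quantity to decompose via \eqref{rep} is $r$ itself (minus only $\sum_{i\in F} b_i$), and $r \ge ek \ge k$ is comfortably large. One then checks that if $l \ge r/b_1 + (dc+b_1)/b_{n-1} + n$, this solution can be rearranged to one with $z_n > 0$, so $F \cup \{n\}$ is a face, contradicting maximality of $F$. The essential move you are missing is to work with $r$ rather than $r - ek$, and it is precisely the double cone structure (Lemma~\ref{Lem2.3}) that hands you a facet avoiding $0$ and makes this possible.
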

\begin{proof} In order for $\Delta_{l,r}(j)$ to have non-trivial homology groups, there must exist at least a facet $F$ of $\Delta_{l,r}(j)$ such that $n\notin F$. By Lemma \ref{Lem2.3}, this implies that $0\notin F$. Therefore, by Definition \ref{deltalr}, the equation
\begin{equation}\label{EL3.1}
y_1 b_1 + ... + y_n b_n = r
\end{equation}
has a solution $y = (y_1, ...,y_n)$ such that $\supp y = F$, $y_n = 0$, $\sum_{i=1}^{n-1} y_i = l$. As in \eqref{rep}, we can write 
$$r = \sum_{i\in F} b_i + tb_1 + \w\cdot \bb,$$
with 
$$|\w| < \frac{dc+b_1}{b_{n-1}} \text{ and } t < \frac{r}{b_1}.$$
Therefore, if $l \ge \frac{r}{b_1} + \frac{dc +b_1}{b_{n-1}} + n$ then $t + |\w| + n < l$. In particular, the equation \eqref{EL3.1} has a solution $z_1 = t + w_1$, $z_i = \delta_{iF} +w_i$ for $2\le i \le n-1$ and $z_n = l - t - |\w| - |F| > 0$. By Definition \ref{deltalr}, $F \cup \{n\} \subseteq \supp z$ is a face of $\Delta_{l,r}(j)$, which is a contradiction.
\end{proof}

The following two lemmas are the first indication of a relation between $\Delta_{l,r}(j)$ and $\Delta_{l+e,r+eb_1}(j+b_1)$.

\begin{Lemma}\label{Lem3.4} If $l > n + \reg J(\a)$ and $\Delta_{l,r}(j)$ has non-trivial homology groups, then 
$$\Delta_{l,r}(j) \subseteq \Delta_{l+e,r+eb_1}(j+b_1).$$
\end{Lemma}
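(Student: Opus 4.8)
The plan is to unwind the definition of the two simplicial complexes and show, face by face, that every face of $\Delta_{l,r}(j)$ is a face of $\Delta_{l+e,r+eb_1}(j+b_1)$. It suffices to check this on faces, and since $\Delta_{l,r}(j)$ has non-trivial homology groups with $l > n + \reg J(\a)$, Lemmas \ref{Lem2.2} and \ref{Lem2.3} give the double cone structure: every face either contains $0$ (hence, if it is a facet, it contains $n$) or it contains $1$. So I would split the argument into these two cases, handling an arbitrary face $F$ and producing from a witnessing solution of $y_0 k + \sum_{i=1}^n y_i b_i = r$ a witnessing solution of $z_0 (k+b_1) + \sum_{i=1}^n z_i b_i = r + e b_1$ with $|z| = l + e$ and $F \subseteq \supp z$.

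The key computation is bookkeeping with the parameter $k$: note $\w_1$ for $\overline{\a+j}$ is $(b_1, k - b_1)$ and for $\overline{\a+j+b_1}$ is $(b_1, k)$, while the other generators are unchanged in the relevant coordinates, and the common coordinate-sum jumps from $k$ to $k+b_1$. Concretely, given a solution $y$ of $y_0 k + \sum_{i\ge 1} y_i b_i = r$ with $|y| = l$: if $0 \in F$, so $y_0 = e$ by Lemma \ref{Lem2.1}, I would keep $y_0 = e$ and observe that replacing $k$ by $k+b_1$ adds exactly $e b_1$ to the left-hand side, matching the new target $r + e b_1$, while the total $|y|$ is still $l < l + e$; I then need to absorb the extra $e$ into $|z|$, which I can do because Lemma \ref{Lem2.3} ensures $n \in F$ in the facet case so $y_n > 0$, and more robustly I can push the slack onto a coordinate already in the support or onto $y_n$ after re-expanding via \eqref{rep}. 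If $0 \notin F$, then $y_0 = 0$, so the equation $\sum_{i\ge1} y_i b_i = r$ is unchanged; here I must instead produce a solution of $z_0(k+b_1) + \sum z_i b_i = r + e b_1$ with $z_0 = e$ and $|z| = l+e$ and $F \subseteq \supp z$, which means rewriting $r + e b_1 - e(k+b_1) = r - ek$ in the form $\sum_{i\in F} b_i + t b_1 + \w\cdot\bb$ as in \eqref{rep} (using $r - ek \ge dc$, guaranteed by Lemma \ref{Lem2.1}) and checking the size bound $|\w| + |F| + t + e \le l + e$, i.e. $|\w| + |F| + t \le l$, which follows from Lemma \ref{Lem3.1} together with \eqref{twbound}, exactly as in the proof of Lemma \ref{Lem3.1}.

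The main obstacle I anticipate is the case $0 \notin F$: one must verify that the inequalities coming from \eqref{Eq2.1}, Lemma \ref{Lem2.1} (range of $r$), and Lemma \ref{Lem3.1} (range of $l$) combine to leave enough room to place all of $F$ plus the $t b_1 + \w\cdot\bb$ expansion plus the mandatory $z_0 = e$ inside a solution of total size exactly $l + e$, without forcing the support to be all of $\{0,\ldots,n\}$ (which would kill the homology). In other words, the delicate point is that $\Delta_{l+e,r+eb_1}(j+b_1)$ is "large enough to contain $\Delta_{l,r}(j)$" but the bounds are tight; I would lean on the fact that Lemma \ref{Lem3.1}'s bound on $l$ and the representation bounds \eqref{twbound} were precisely calibrated for this. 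The reverse inclusion (equality of the two complexes) is presumably the content of the next lemma, so here I only argue one containment. Once the two cases are dispatched, the conclusion $\Delta_{l,r}(j) \subseteq \Delta_{l+e, r+eb_1}(j+b_1)$ is immediate since a subset of a face is a face.
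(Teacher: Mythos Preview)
Your case $0\in F$ is essentially right (and a bit over-engineered): the paper simply adds $e$ to $y_n$, which is harmless because $b_n=0$, giving $z=(e,y_1,\dots,y_{n-1},y_n+e)$ with $e(k+b_1)+\sum z_ib_i = r+eb_1$, $|z|=l+e$, and $\supp z\supseteq F$. No appeal to Lemma~\ref{Lem2.3} or to the representation \eqref{rep} is needed.

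Your case $0\notin F$, however, has a genuine error. You insist on constructing $z$ with $z_0=e$; but such a $z$ would have $0\in\supp z$, so you would actually be proving that $F\cup\{0\}$ is a face of $\Delta_{l+e,r+eb_1}(j+b_1)$, which is in general false. Concretely, the equation you would need is $\sum_{i\ge1} z_ib_i = r-ek$ with $\sum_{i\ge1} z_i = l$ and $F\subseteq\supp z$. By Lemma~\ref{Lem2.1} one can have $r-ek$ arbitrarily small (even $r=ek$), while Lemma~\ref{Lem2.2} forces $1\in F$, hence $z_1\ge1$ and $\sum z_ib_i\ge b_1$; so no such $z$ exists in general. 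Your proposed rewriting $r-ek=\sum_{i\in F}b_i + tb_1 + \w\cdot\bb$ via \eqref{rep} likewise requires $r-ek-\sum_{i\in F}b_i\ge dc$, which is not guaranteed by the range $ek\le r<ek+dc+B$.

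The fix is trivial and is what the paper does: keep $z_0=0$ and set $z_1=y_1+e$, $z_i=y_i$ for $i\ge2$. Since $y_0=0$ the equation $\sum y_ib_i=r$ does not involve $k$, so $\sum z_ib_i=r+eb_1$, $|z|=l+e$, and $\supp z\supseteq\supp y=F$. No bounds, no representation \eqref{rep}, and no use of Lemma~\ref{Lem3.1} are required for this direction.
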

\begin{proof} Let $F$ be a facet of $\Delta_{l,r}(j)$. We need to prove that $F \in \Delta_{l+e,r+eb_1}(j+b_1)$. If $0\notin F$ then by Definition \ref{deltalr}, the equation 
$$y_1 b_1 + ... + y_nb_n = r$$
has a solution $y = (y_1, ..., y_n)$ such that $\supp y =  F$ and $\sum_{i=1}^n y_i = l$. Therefore, the equation 
$$y_1b_1 + ... + y_nb_n= r+ eb_1$$
has a solution $z=(y_1+e, y_2, ..., y_n)$ such that $\supp z \supseteq  F$ and $\sum_{i=1}^n z_i = l+e.$ By Definition \ref{deltalr}, $F \in \Delta_{l+e,r+eb_1}(j+b_1).$

If $0 \in F$, then by Definition \ref{deltalr} and Lemma \ref{Lem2.1}, the equation 
$$y_0k + y_1 b_1 + ... +y_nb_n = r$$
has a solution $y = (e,y_1, ..., y_n)$ such that $\supp y = F$ and $\sum_{i=0}^n y_i = l$. Thus the equation 
$$y_0(k+b_1) + y_1 b_1 + ... + y_nb_n = r +eb_1$$
has a solution $z = (e,y_1, ..., y_{n-1}, y_n+e)$ such that $\supp z \supseteq F$ and $\sum_{i=0}^n z_i = l+ e$. By Definition \ref{deltalr}, $F \in \Delta_{l+e,r+eb_1}(j+b_1).$
\end{proof}

Similarly, we have 
\begin{Lemma}\label{Lem3.3}If $l > n + \reg J(\a)$ and $\Delta_{l,r}(j+b_1)$ has non-trivial homology groups, then
$$\Delta_{l-e,r-eb_1}(j) \subseteq \Delta_{l,r}(j+b_1).$$
\end{Lemma}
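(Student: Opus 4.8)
The plan is to mirror the proof of Lemma~\ref{Lem3.4}, reversing the roles of $j$ and $j+b_1$, and to use the fact (noted in the preliminaries) that $e = d/\gcd(d,k) = d/\gcd(d,k+b_1)$, so the same integer $e$ governs the solutions with $y_0>0$ for both semigroups. First I would take a facet $F$ of $\Delta_{l-e,r-eb_1}(j)$ and show $F\in\Delta_{l,r}(j+b_1)$, splitting into the cases $0\notin F$ and $0\in F$ exactly as before.

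If $0\notin F$, Definition~\ref{deltalr} gives a non-negative solution $y=(y_1,\dots,y_n)$ of $y_1b_1+\cdots+y_nb_n = r-eb_1$ with $\supp y = F$ and $\sum y_i = l-e$; then $z=(y_1+e,y_2,\dots,y_n)$ solves $y_1b_1+\cdots+y_nb_n = r$ with $\supp z\supseteq F$ and $\sum z_i = l$, which is precisely the condition for $F\in\Delta_{l,r}(j+b_1)$ (note the generating vectors for $\overline{\a+j+b_1}$ restricted to $i\ge 1$ are again $(b_1,\dots,b_n)$, so this case does not even see the change in $k$). If $0\in F$, I first need $\Delta_{l-e,r-eb_1}(j)$ to have non-trivial homology in order to invoke Lemma~\ref{Lem2.1}; the hypothesis is instead that $\Delta_{l,r}(j+b_1)$ has non-trivial homology, so the small technical point here is to argue that the relevant facet-count/degree hypotheses still let us conclude $y_0=e$. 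Concretely, one applies Lemma~\ref{Lem2.1} to $\Delta_{l,r}(j+b_1)$ (which is legitimate since the preliminaries observe $e$ is the same for $k$ and $k+b_1$, and the range bound $ek\le r < ek+dc+B$ from that lemma, suitably shifted, controls $y_0$), obtaining that any solution with $y_0>0$ has $y_0=e$. Then from a solution $y=(e,y_1,\dots,y_n)$ of $y_0(k+b_1)+y_1b_1+\cdots+y_nb_n = r$ with $\supp y = F$, $\sum y_i = l$, I drop one copy of $(k+b_1)$ and add $e$ copies of $b_1$ and one... rather, mimic Lemma~\ref{Lem3.4} in reverse: set $z=(e,y_1,\dots,y_{n-1},y_n-e)$, giving $y_0k + y_1b_1+\cdots+y_nb_n = r - eb_1$ — but this requires $y_n\ge e$, which is exactly where Lemma~\ref{Lem2.3} (the double cone over $\{0,n\}$, so $n\in F$) together with Lemma~\ref{Lem3.1}'s range control on $l$ versus $r$ must be used to guarantee $y_n$ can be taken large enough, or one simply re-solves the equation using the representation \eqref{rep} as in the earlier lemmas.

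The main obstacle I anticipate is precisely this last point in the $0\in F$ case: unlike the forward direction in Lemma~\ref{Lem3.4}, where one only \emph{adds} generators and monotonicity of supports is automatic, here we must \emph{remove} a generator $(k+b_1)$ and redistribute, so we must know the solution for $\Delta_{l,r}(j+b_1)$ has enough ``room'' in the $y_n$-coordinate (or can be chosen to) to absorb the $-e$ shift while keeping all coordinates non-negative and $\supp z\supseteq F$. The resolution will be to invoke Lemma~\ref{Lem2.3} applied to $\Delta_{l,r}(j+b_1)$ — giving $n\in F$ — and then argue as in the proof of Lemma~\ref{Lem2.3}/\eqref{rep} that among all solutions realizing the facet $F$ one may choose one with $y_n$ as large as desired (bounded below by $e$), using that $l$ is not too large relative to $r/b_1$ by Lemma~\ref{Lem3.1}. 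Once $F\in\Delta_{l,r}(j+b_1)$ is established for every facet $F$ of $\Delta_{l-e,r-eb_1}(j)$, the containment $\Delta_{l-e,r-eb_1}(j)\subseteq\Delta_{l,r}(j+b_1)$ follows, completing the proof; I expect this lemma together with Lemma~\ref{Lem3.4} to combine in the next step into the equality $\Delta_{l,r}(j)=\Delta_{l+e,r+eb_1}(j+b_1)$.
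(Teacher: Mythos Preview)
Your $0\notin F$ case is correct and matches the paper exactly. Your handling of the technical point ``why is $y_0=e$?'' is also right and matches the paper: one uses the hypothesis on $\Delta_{l,r}(j+b_1)$, not on $\Delta_{l-e,r-eb_1}(j)$, to get $d\mid r$ (hence $e\mid y_0$) and the upper bound $r<e(k+b_1)+dc+B$, which forces $y_0=e$.

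However, in the $0\in F$ case you have reversed the direction of the construction. Since $F$ is a facet of $\Delta_{l-e,r-eb_1}(j)$, the solution you start with satisfies
\[
y_0k + y_1b_1+\cdots+y_nb_n = r-eb_1,\qquad \sum_{i=0}^n y_i = l-e,\qquad y_0=e,
\]
and you must \emph{produce} a solution of $z_0(k+b_1)+z_1b_1+\cdots+z_nb_n=r$ with $\sum z_i=l$. The paper simply takes $z=(e,y_1,\dots,y_{n-1},y_n+e)$: then $e(k+b_1)+\sum_{i\ge 1} z_ib_i = ek+eb_1+\sum_{i\ge 1} y_ib_i = (r-eb_1)+eb_1=r$ (using $b_n=0$), and $\sum z_i = (l-e)+e=l$. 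Since $y_n+e\ge 0$ automatically, there is nothing to check.

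Your proposal instead starts from a solution of $y_0(k+b_1)+\cdots=r$ and tries to go \emph{back} to $r-eb_1$, which is the wrong containment; this is why you end up needing $y_n\ge e$ and invoking Lemmas~\ref{Lem2.3} and~\ref{Lem3.1}. That entire ``main obstacle'' paragraph is an artifact of this reversal and is unnecessary: in the forward direction one only \emph{adds} $e$ to the $n$-th coordinate, so non-negativity is free and no double-cone argument is needed here. Once you correct the direction, your proof coincides with the paper's.
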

\begin{proof}Let $F$ be a facet of $\Delta_{l-e,r-eb_1}(j)$. We need to prove that $F \in \Delta_{l,r}(j+b_1)$. If $0\notin F$ then by Definition \ref{deltalr}, the equation 
$$y_1 b_1 + ... + y_nb_n = r-eb_1$$
has a solution $y = (y_1, ..., y_n)$ such that $\supp y =  F$ and $\sum_{i=1}^n y_i = l-e$. Therefore, the equation 
$$y_1b_1 + ... + y_nb_n= r$$
has a solution $z=(y_1+e, y_2, ..., y_n)$ such that $\supp z \supseteq  F$ and $\sum_{i=1}^n z_i = l.$ By Definition \ref{deltalr}, $F \in \Delta_{l,r}(j+b_1).$

If $0 \in F$ then by Definition \ref{deltalr}, the equation 
$$y_0k + y_1 b_1 + ... +y_nb_n = r-eb_1$$
has a solution $y = (y_0, ..., y_n)$ such that $\supp y = F$ and $\sum_{i=0}^n y_i = l-e$. Since $\Delta_{l,r}(j+b_1)$ has non-trivial homology group, the proof of Lemma \ref{Lem2.1} shows that $d|r$, thus $e|y_0$. Also, by Lemma \ref{Lem2.1}, $r < e(k+b_1)+dc+B$, thus $y_0 = e$. Therefore, the equation 
$$y_0(k+b_1) + y_1 b_1 + ... + y_nb_n = r$$
has a solution $z = (e,y_1, ..., y_{n-1}, y_n+e)$ such that $\supp z \supseteq F$ and $\sum_{i=0}^n z_i = l$. By Definition \ref{deltalr}, $F \in \Delta_{l,r}(j+b_1).$
\end{proof}

\begin{Proposition}\label{EquHom} If $l > n + \reg J(\a)$ and $\Delta_{l,r}(j)$ has non-trivial homology groups, then 
$$\Delta_{l,r}(j) = \Delta_{l+e,r+eb_1}(j+b_1).$$
\end{Proposition}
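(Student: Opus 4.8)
The plan is to combine the two inclusions already available with the control on the range of $l$ from Lemma~\ref{Lem3.1}. By Lemma~\ref{Lem3.4}, since $\Delta_{l,r}(j)$ has non-trivial homology groups and $l > n + \reg J(\a)$, we have $\Delta_{l,r}(j) \subseteq \Delta_{l+e,r+eb_1}(j+b_1)$. In particular the bigger complex has a face, hence is non-empty, and more importantly it inherits enough structure that I can apply the double cone results to it: one checks (using Lemma~\ref{Lem2.1} applied in the $j+b_1$ setting, noting $e = d/\gcd(d,k+b_1)$) that $\Delta_{l+e,r+eb_1}(j+b_1)$ either has trivial homology or is itself a double cone. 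The key point is to then apply Lemma~\ref{Lem3.3} in the reverse direction to $\Delta_{l+e,r+eb_1}(j+b_1)$, provided we know that complex has non-trivial homology; this would give $\Delta_{l,r}(j) \subseteq \Delta_{l+e,r+eb_1}(j+b_1) \subseteq \Delta_{l,r}(j)$, forcing equality.

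So the first step is to rule out the bad case where $\Delta_{l+e,r+eb_1}(j+b_1)$ has trivial homology while $\Delta_{l,r}(j)$ does not. I expect this to be handled as follows: if $\Delta_{l,r}(j)$ has non-trivial homology, then by Lemmas~\ref{Lem2.2} and \ref{Lem2.3} it is a genuine double cone with a facet $F_1 \ni 0, n$ and $F_1 \not\ni 1$, and a facet $F_2 \ni 1$ with $F_2 \not\ni 0$. Under the inclusion of Lemma~\ref{Lem3.4} these become faces of $\Delta_{l+e,r+eb_1}(j+b_1)$, and one shows the double cone structure is preserved — in particular $\Delta_{l+e,r+eb_1}(j+b_1)$ is not the full simplex and is not a cone over a single vertex, so it has non-trivial reduced homology. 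Concretely, I would show $1 \notin \bigcap(\text{facets containing }0)$ and $0 \notin \bigcap(\text{facets containing }1)$ persist after enlarging by $e$ copies at the $x_1$ and $x_n$ slots; the arithmetic is routine once one tracks which coordinate the extra $e$ is absorbed into in each of the two cases of the proof of Lemma~\ref{Lem3.4}.

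Alternatively, and perhaps more cleanly, I would verify that the range constraints match up exactly so that Lemma~\ref{Lem3.3} applies directly. Set $l' = l+e$, $r' = r + eb_1$. We have $l' > n + \reg J(\a)$ trivially. We need $\Delta_{l',r'}(j+b_1)$ to have non-trivial homology in order to invoke Lemma~\ref{Lem3.3}, which then yields $\Delta_{l'-e,r'-eb_1}(j) = \Delta_{l,r}(j) \subseteq \Delta_{l',r'}(j+b_1)$ — wait, that is the wrong direction, so the genuine content is: Lemma~\ref{Lem3.3} gives $\Delta_{l,r}(j) \subseteq \Delta_{l',r'}(j+b_1)$, which we already knew. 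The reverse inclusion $\Delta_{l',r'}(j+b_1) \subseteq \Delta_{l,r}(j)$ is what must be proven, and it should come from an argument symmetric to Lemma~\ref{Lem3.4}: take a facet $G$ of $\Delta_{l',r'}(j+b_1)$; if $0 \notin G$, strip $e$ from the $y_1$-coordinate of a witnessing solution (possible once one checks $y_1 \geq e$, which uses that by the double cone structure of $\Delta_{l',r'}(j+b_1)$ one can choose the witnessing solution with large $y_1$, via a representation as in \eqref{rep} together with \eqref{twbound} and the bound from Lemma~\ref{Lem3.1}); if $0 \in G$, then $y_0 = e$ by Lemma~\ref{Lem2.1}, and we strip $e$ from the $y_n$-coordinate after checking $y_n \geq e$ similarly.

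The main obstacle will be the case $0 \notin G$ (equivalently $0 \notin F$) in proving the reverse inclusion: one must produce a witnessing solution of $\sum y_i b_i = r'$ with support $G$ and $y_1 \geq e$, so that subtracting $e$ from $y_1$ keeps all coordinates non-negative and the support unchanged. This is where the hypothesis $j > N$ and the inequality \eqref{Eq2.1} do their work — the argument should mirror the support-enlargement trick in Lemmas~\ref{Lem2.2}, \ref{Lem3.1}: write $r' - \sum_{i\in G} b_i$ in the form \eqref{rep} as $tb_1 + \w\cdot\bb$, observe via Lemma~\ref{Lem3.1} (applied to $\Delta_{l',r'}(j+b_1)$, after confirming its hypotheses) that $t$ is large — indeed $t \geq e$ — and redistribute to get the desired witnessing solution. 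Once both cases are settled, combining with Lemma~\ref{Lem3.4} gives the equality $\Delta_{l,r}(j) = \Delta_{l+e,r+eb_1}(j+b_1)$ and completes the proof.
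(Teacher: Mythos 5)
Your high-level plan is the right one: one inclusion is Lemma~\ref{Lem3.4}, and the reverse inclusion $\Delta_{l+e,r+eb_1}(j+b_1) \subseteq \Delta_{l,r}(j)$ is obtained by taking a facet $F$, splitting into $0\in F$ and $0\notin F$, and subtracting $e$ from the $y_1$- or $y_n$-coordinate of a witnessing solution (using $y_0=e$ from Lemma~\ref{Lem2.1} in the second case). This is exactly the structure of the paper's proof. However, there is a genuine gap in how you propose to get $y_1>e$ (resp.\ $y_n>e$): you want to invoke Lemma~\ref{Lem3.1} applied to $\Delta_{l+e,r+eb_1}(j+b_1)$ ``after confirming its hypotheses,'' i.e.\ after showing the larger complex has non-trivial homology. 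You wave at this via ``one shows the double cone structure is preserved,'' but Lemma~\ref{Lem3.4} only turns facets of $\Delta_{l,r}(j)$ into \emph{faces} of the larger complex; the facet with $0,n$ and without $1$ could a priori sit inside a facet of $\Delta_{l+e,r+eb_1}(j+b_1)$ that does contain $1$, collapsing the cone. Ruling that out is essentially the reverse inclusion you are trying to prove, so this step risks circularity and cannot be dismissed as routine bookkeeping.

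The paper sidesteps this entirely: it applies Lemma~\ref{Lem3.1} and Lemma~\ref{Lem2.1} only to the \emph{smaller} complex $\Delta_{l,r}(j)$, whose non-trivial homology is given. Concretely, if $0\notin F$ and $y$ is the witnessing solution with $\supp y=F$, $\sum y_i=l+e$, then $y_1\le e$ gives $r=\sum y_ib_i-eb_1\le\sum_{i\ge2}y_ib_i<(l+e)b_2$, and Lemma~\ref{Lem3.1} for $\Delta_{l,r}(j)$ together with \eqref{Eq2.1} forces $(l+e)b_2<r$, a contradiction. If $0\in F$, then $y_n\le e$ forces $r-ek\ge(l-e)b_{n-1}>dc+B$, contradicting the upper bound on $r$ from Lemma~\ref{Lem2.1} for $\Delta_{l,r}(j)$. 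Nothing about the homology of the larger complex is needed. Note also that your fallback idea of ``producing'' a fresh witnessing solution via \eqref{rep} and redistributing is problematic: the redistribution can enlarge the support beyond $F$ (contradicting that $F$ is a facet of the \emph{other} complex) and there is no way in general to make the coordinate sum equal $l+e$ when $n\notin F$; the paper instead argues directly about the witnessing solution that the facet $F$ already provides.
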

\begin{proof} By Lemma \ref{Lem3.4}, it suffices to show that for any facet $F$ of $\Delta_{l+e,r+eb_1}(j+b_1)$, we have $F \in \Delta_{l,r}(j)$. 

If $0 \notin F$, then by Definition \ref{deltalr}, the equation 
$$y_1 b_1 + ... + y_nb_n = r + eb_1$$
has a solution $y = (y_1, ..., y_n)$ such that $\supp y =  F$ and $\sum_{i=1}^n y_i = l+e$. Assume that $y_1 \le e$, then 
$$r = \sum_{i=1}^n y_i b_i - eb_1 \le \sum_{i=2}^n y_i b_i <(l+e)b_2.$$
By Lemma \ref{Lem3.1} and inequality \eqref{Eq2.1},
$$r < (l+e)b_2 < \left ( \frac{r}{b_1} + \frac{dc+b_1}{b_{n-1}} + n + e\right ) b_2 < \left ( \frac{r}{b_1} + \frac{r}{b_1b_2} \right ) b_2 \le r.$$

Thus we must have $y_1 > e$. Therefore, the equation
$$y_1b_1 + ... + y_nb_n = r$$
has the solution $z = (y_1-e, ...,y_n)$ such that $\supp z = \supp y = F$ and $\sum_{i=1}^n z_i = l$. By Definition \ref{deltalr}, $F \in \Delta_{l,r}(j).$ 

If $0\in F$, then by Definition \ref{deltalr} and Lemma \ref{Lem2.1}, the equation 
$$e(k+b_1) + y_1 b_1 + ... + y_nb_n = r+eb_1$$
has a solution $y = (y_1, ..., y_n)$ such that $\{0\} \cup \supp y= F$ and $e + \sum_{i=1}^n y_i = l+e.$ Assume that $y_n \le e$, then $\sum_{i=1}^{n-1}y_i \ge l-e$. By Lemma \ref{Lem2.1}, and inequality \eqref{Eq2.1}, 
\begin{align*}
r - ek& = \sum_{i=1}^{n-1} y_i b_i \ge (l-e)b_{n-1} \ge \left ( \frac{r}{b_1} - e \right )b_{n-1}\\
 & >b_2(dc+b_1 + Bb_{n-1}) - eb_{n-1} > dc + B.
\end{align*}
In other words, $r> ek + dc + B$. By Lemma \ref{Lem2.1}, $\Delta_{l,r}(j)$ has trivial homology, which is a contradiction. Thus, $y_n > e$. In particular, the equation 
$$ek + y_1 b_1 + ... + y_n b_n = r$$
has the solution $z = (y_1, ..., y_{n-1}, y_n-e)$ such that $\supp z = \supp y = F$ and $e +\sum_{i = 1}^n z_i = l$. Therefore, by Definition \ref{deltalr}, $F \in \Delta_{l,r}(j).$
\end{proof}

\begin{Proposition}\label{EqH} If $l> n + \reg J(\a)$ and $\Delta_{l,r}(j+b_1)$ has non-trivial homology groups, then 
$$\Delta_{l,r}(j+b_1) = \Delta_{l-e,r-eb_1}(j).$$
\end{Proposition}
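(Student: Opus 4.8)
The plan is to prove Proposition~\ref{EqH} as the mirror image of Proposition~\ref{EquHom}, reusing the two inclusion lemmas already at hand. Concretely, assume $l > n + \reg J(\a)$ and that $\Delta_{l,r}(j+b_1)$ has non-trivial homology groups. Lemma~\ref{Lem3.3} immediately gives one inclusion, $\Delta_{l-e,r-eb_1}(j) \subseteq \Delta_{l,r}(j+b_1)$, so it suffices to show that every facet $F$ of $\Delta_{l,r}(j+b_1)$ lies in $\Delta_{l-e,r-eb_1}(j)$. This is exactly the reverse inclusion, and I would establish it by the same degree-counting argument used in the proof of Proposition~\ref{EquHom}, but now "subtracting $e$" instead of "adding $e$."

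The argument splits into the two cases $0\notin F$ and $0\in F$. If $0\notin F$, then by Definition~\ref{deltalr} the equation $y_1b_1 + \cdots + y_nb_n = r$ has a solution $y$ with $\supp y = F$ and $\sum_{i=1}^n y_i = l$; I must show $y_1 > e$ (possibly after first invoking that $\Delta_{l,r}(j+b_1)$ having non-trivial homology forces, via Lemma~\ref{Lem3.3} or a direct analogue of Lemma~\ref{Lem3.1}, the bound $l < r/b_1 + (dc+b_1)/b_{n-1} + n$ for the relevant complex), so that $z = (y_1 - e, y_2, \dots, y_n)$ is a valid solution with the same support and $\sum z_i = l - e$, yielding $F \in \Delta_{l-e,r-eb_1}(j)$. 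The inequality $y_1 > e$ follows as before: if $y_1 \le e$, then $r - eb_1 \le \sum_{i=2}^n y_i b_i < l b_2$, and combining with the range bounds on $l$ coming from the non-triviality hypothesis together with inequality~\eqref{Eq2.1} produces the contradiction $r - eb_1 < r - eb_1$ (or $r < r$). If $0\in F$, then by Definition~\ref{deltalr} and Lemma~\ref{Lem2.1} applied to $\Delta_{l,r}(j+b_1)$ — whose relevant semigroup is generated with first generator $k + b_1$, and for which Lemma~\ref{Lem2.1}'s proof shows $d\mid r$, hence $e \mid y_0$, hence $y_0 = e$ since $r < e(k+b_1) + dc + B$ — the equation $e(k+b_1) + y_1b_1 + \cdots + y_nb_n = r$ has a solution with $\{0\}\cup\supp y = F$ and $e + \sum_{i=1}^n y_i = l$. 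Here I need $y_n > e$: if $y_n \le e$ then $r - e(k+b_1) = \sum_{i=1}^{n-1} y_i b_i \ge (l - e) b_{n-1}$, and using the lower bound on $l$ (that $l \ge r/b_1$, valid for the $(j+b_1)$-complex by the analogue of Lemma~\ref{Lem2.1}) together with \eqref{Eq2.1} gives $r - e(k+b_1) > dc + B$, i.e.\ $r > e(k+b_1) + dc + B$, contradicting the non-triviality of $\Delta_{l,r}(j+b_1)$ via Lemma~\ref{Lem2.1}. Hence $z = (y_1, \dots, y_{n-1}, y_n - e)$ solves $ek + y_1b_1 + \cdots + y_nb_n = r - eb_1$ with the same support and $e + \sum z_i = l - e$, so $F \in \Delta_{l-e,r-eb_1}(j)$.

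The main obstacle I anticipate is bookkeeping rather than conceptual: the bounds in Lemmas~\ref{Lem2.1}, \ref{Lem3.1} are stated and proved for $\Delta_{l,r} = \Delta_{l,r}(j)$, while here I need their counterparts for $\Delta_{l,r}(j+b_1)$. Since $j+b_1 > N$ as well (indeed $j+b_1 > j > N$), and since the paper already notes $e = d/\gcd(d,k+b_1)$ so the divisibility structure is unchanged, these counterparts hold verbatim with $k$ replaced by $k+b_1$ wherever it indexes the first semigroup generator; I would either cite them as "the proof of Lemma~\ref{Lem2.1} applied to $\Delta_{l,r}(j+b_1)$" (as is already done inside the proof of Lemma~\ref{Lem3.3}) or remark once that all of section~\ref{doublecone} applies to $\overline{\a+j+b_1}$ as well. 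With that in place, the proof is a direct transcription of Proposition~\ref{EquHom}'s proof with the roles of adding and subtracting $e$ interchanged, and I would keep it short by referencing that proof for the shared inequality manipulations.

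\begin{proof}
By Lemma~\ref{Lem3.3}, $\Delta_{l-e,r-eb_1}(j) \subseteq \Delta_{l,r}(j+b_1)$, so it suffices to show that any facet $F$ of $\Delta_{l,r}(j+b_1)$ belongs to $\Delta_{l-e,r-eb_1}(j)$. Since $j + b_1 > N$, all results of section~\ref{doublecone} apply to the semigroup $\overline{\a+j+b_1}$ with $k$ replaced by $k+b_1$; in particular, as in the proof of Lemma~\ref{Lem2.1}, $d \mid r$, and the analogue of Lemma~\ref{Lem3.1} gives $l < r/b_1 + (dc+b_1)/b_{n-1} + n$, while the analogue of Lemma~\ref{Lem2.1} gives $l \ge r/b_1$ and $r < e(k+b_1) + dc + B$.

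If $0\notin F$, then by Definition~\ref{deltalr} the equation $y_1 b_1 + \cdots + y_n b_n = r$ has a solution $y = (y_1, \dots, y_n)$ with $\supp y = F$ and $\sum_{i=1}^n y_i = l$. If $y_1 \le e$, then
$$r - eb_1 \le \sum_{i=2}^n y_i b_i < l b_2,$$
so by the bound on $l$ and inequality \eqref{Eq2.1},
$$r < lb_2 + eb_1 < \left( \frac{r}{b_1} + \frac{dc+b_1}{b_{n-1}} + n + e\right) b_2 < \left( \frac{r}{b_1} + \frac{r}{b_1 b_2}\right) b_2 \le r,$$
a contradiction. Hence $y_1 > e$, and $z = (y_1 - e, y_2, \dots, y_n)$ solves $y_1 b_1 + \cdots + y_n b_n = r - eb_1$ with $\supp z = F$ and $\sum_{i=1}^n z_i = l - e$. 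By Definition~\ref{deltalr}, $F \in \Delta_{l-e, r-eb_1}(j)$.

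If $0\in F$, then by Definition~\ref{deltalr} and the analogue of Lemma~\ref{Lem2.1} for $\overline{\a+j+b_1}$, the equation $e(k+b_1) + y_1 b_1 + \cdots + y_n b_n = r$ has a solution $y = (y_1, \dots, y_n)$ with $\{0\}\cup \supp y = F$ and $e + \sum_{i=1}^n y_i = l$. If $y_n \le e$, then $\sum_{i=1}^{n-1} y_i \ge l - e$, so by the analogue of Lemma~\ref{Lem2.1} and inequality \eqref{Eq2.1},
\begin{align*}
r - e(k+b_1) &= \sum_{i=1}^{n-1} y_i b_i \ge (l-e) b_{n-1} \ge \left( \frac{r}{b_1} - e\right) b_{n-1} \\
&> b_2(dc + b_1 + B b_{n-1}) - e b_{n-1} > dc + B.
\end{align*}
Hence $r > e(k+b_1) + dc + B$, which by the analogue of Lemma~\ref{Lem2.1} forces $\Delta_{l,r}(j+b_1)$ to have trivial homology, a contradiction. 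Therefore $y_n > e$, and $z = (y_1, \dots, y_{n-1}, y_n - e)$ solves $ek + y_1 b_1 + \cdots + y_n b_n = r - eb_1$ with $\supp z = \supp y$ and $e + \sum_{i=1}^n z_i = l - e$. By Definition~\ref{deltalr}, $F \in \Delta_{l-e, r-eb_1}(j)$.
\end{proof}
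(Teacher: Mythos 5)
Your strategy is exactly the one the paper intends (it just says ``similar to Proposition~\ref{EquHom}''): use Lemma~\ref{Lem3.3} for one inclusion and mirror the degree-counting in Proposition~\ref{EquHom} for the reverse, noting that all of section~\ref{doublecone} applies verbatim to $\overline{\a+j+b_1}$. However, the two displayed inequality chains do not quite survive the mirroring, and both need repair.

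In the case $0\notin F$: from $y_1\le e$ you correctly get $r - eb_1 \le \sum_{i\ge 2} y_i b_i < lb_2$, i.e.\ $r < lb_2 + eb_1$. But your next step, $lb_2 + eb_1 < \bigl(\frac{r}{b_1} + \frac{dc+b_1}{b_{n-1}} + n + e\bigr)b_2$, does not follow from the analogue of Lemma~\ref{Lem3.1}: that lemma gives $lb_2 < \bigl(\frac{r}{b_1} + \frac{dc+b_1}{b_{n-1}} + n\bigr)b_2$, and the right-hand side of your inequality only adds $eb_2$, whereas you added $eb_1 > eb_2$ on the left. (In Proposition~\ref{EquHom} the initial bound is $r < (l+e)b_2 = lb_2 + eb_2$, so the paper's chain goes through; yours is genuinely weaker.) The contradiction still holds --- after rearranging you need $\bigl(\frac{dc+b_1}{b_{n-1}}+n\bigr)b_2 + eb_1 < r\cdot\frac{b_1-b_2}{b_1}$, and since $r \ge e(k+b_1)$ and $\frac{k}{b_1b_2} > \frac{dc+b_1}{b_{n-1}} + B$ with $B - n = \sum b_i + d \ge b_1 + d$, the slack $b_2(\sum b_i + d - e) > 0$ absorbs the extra $e(b_1-b_2)$ --- but you must rewrite the chain; as written it asserts a false intermediate inequality. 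Similarly, in the case $0\in F$: you have $\sum_{i=1}^n y_i = l - e$, so $y_n\le e$ gives $\sum_{i=1}^{n-1}y_i \ge l - 2e$, not $l-e$ as you wrote. Again this is harmless in the end ($(\frac{r}{b_1}-2e)b_{n-1}$ still exceeds $dc+B$ because $r\ge e(k+b_1)$ contributes an extra $eb_{n-1}$ relative to the case with $ek$), but the displayed computation must be corrected. In short: right idea and right conclusion, but the constants shift by $e(b_1-b_2)$ in the first case and by $e$ in the second when you pass from $\Delta_{l+e,r+eb_1}(j+b_1)$ to $\Delta_{l,r}(j+b_1)$, and you carried over the paper's chain without absorbing that shift.
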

\begin{proof} The proof is similar to that of Proposition \ref{EquHom}.
\end{proof}

The equality of the simplicial complexes $\Delta_{l,r}(j)$ and $\Delta_{l+e,r+eb_1}(j+b_1)$ shows that the betti table of $\bar I(\a + j+b_1)$ is obtained from the betti table of $\bar I(\a + j)$ by shifing the high degree part by $e$ rows as follows.

\begin{Theorem}\label{Per} If $l \le n + \reg J(\a)$, then
$$\b_{i,l}(\bar I(\a + j) = \b_{i,l} (\bar I(\a + j+b_1).$$
If $l > n + \reg J(\a)$, then 
$$\b_{i,l}(\bar I(\a+j)) = \b_{i,l+e} (\bar I(\a + j+b_1)).$$
\end{Theorem}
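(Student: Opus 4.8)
The plan is to deduce Theorem \ref{Per} directly from Proposition \ref{barj} together with the two equalities of simplicial complexes established in Propositions \ref{EquHom} and \ref{EqH}, handling the low-degree and high-degree parts separately. Recall that Proposition \ref{barj} gives, for every $i$ and every $l$,
$$\b_{i,l}(\bar I(\a+j)) = \sum_{r\ge 0} \dim_K \tilde H_i(\Delta_{l,r}(j)),$$
and likewise with $j$ replaced by $j+b_1$. So in each case it suffices to set up a bijection between the indices $r$ that contribute a nonzero term on one side and the indices $r'$ that contribute on the other, under which the corresponding simplicial complexes are \emph{equal} (hence have the same reduced homology).

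For the low-degree part, suppose $l \le n + \reg J(\a)$. By Lemma \ref{separation}, any syzygy of $\bar I(\a+j)$ of degree at most $n+\reg J(\a)$ is a syzygy of $J(\a)$, and the same holds for $\bar I(\a+j+b_1)$ since $J(\a)$ does not depend on $j$. Thus for $l$ in this range both $\b_{i,l}(\bar I(\a+j))$ and $\b_{i,l}(\bar I(\a+j+b_1))$ equal $\b_{i,l}(J(\a))$, giving the first asserted equality. (Alternatively, one argues at the level of simplicial complexes: for $l\le n+\reg J(\a)$, Lemma \ref{Lem2.1} type reasoning — or directly Lemma \ref{separation} — forces any $\Delta_{l,r}(j)$ with nontrivial homology to involve no $x_0$, so it depends only on $J(\a)$ and not on $j$.)

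For the high-degree part, fix $l > n + \reg J(\a)$ and $i$. I would show that the map $r \mapsto r + eb_1$ is a bijection from $\{r : \tilde H_i(\Delta_{l,r}(j)) \ne 0\}$ onto $\{r' : \tilde H_i(\Delta_{l+e,r'}(j+b_1)) \ne 0\}$. If $\tilde H_i(\Delta_{l,r}(j))\ne 0$, then $l > n+\reg J(\a)$ and Proposition \ref{EquHom} gives $\Delta_{l,r}(j) = \Delta_{l+e,r+eb_1}(j+b_1)$, so the target complex has the same nonzero homology; this shows the map lands in the right set and is injective. Conversely, if $\tilde H_i(\Delta_{l+e,r'}(j+b_1))\ne 0$ with $l+e > n+\reg J(\a)$, then Proposition \ref{EqH} (applied with $l$ there equal to $l+e$ and $r$ there equal to $r'$) yields $\Delta_{l+e,r'}(j+b_1) = \Delta_{l+e-e,\,r'-eb_1}(j) = \Delta_{l,r'-eb_1}(j)$, so $r' - eb_1$ is a valid preimage and has $\tilde H_i(\Delta_{l,r'-eb_1}(j))\ne 0$; hence the map is surjective. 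Summing $\dim_K \tilde H_i$ over $r$ on both sides via Proposition \ref{barj} then gives
$$\b_{i,l}(\bar I(\a+j)) = \sum_{r} \dim_K \tilde H_i(\Delta_{l,r}(j)) = \sum_{r'} \dim_K \tilde H_i(\Delta_{l+e,r'}(j+b_1)) = \b_{i,l+e}(\bar I(\a+j+b_1)).$$

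The only genuinely delicate point is bookkeeping: one must make sure that the hypothesis "$l > n+\reg J(\a)$" needed to invoke Propositions \ref{EquHom} and \ref{EqH} is available on whichever side one starts from (it is, since $l > n+\reg J(\a)$ forces $l+e > n+\reg J(\a)$, and a nonzero $\tilde H_i(\Delta_{l+e,r'}(j+b_1))$ together with $l+e > n+\reg J(\a)$ is exactly the hypothesis of Proposition \ref{EqH}), and that no index $r$ contributing on one side is accidentally dropped or double-counted — this is precisely what the bijection $r\leftrightarrow r+eb_1$ guarantees. No new estimates are required beyond those already built into $N$; everything reduces to the two complex-equality statements already proved. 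I expect no real obstacle here — the theorem is essentially a repackaging of Propositions \ref{EquHom} and \ref{EqH} through the homological dictionary of Proposition \ref{barj}.
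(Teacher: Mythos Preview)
Your proposal is correct and follows essentially the same route as the paper: the low-degree part is handled via Lemma \ref{separation}, and the high-degree part by combining Propositions \ref{EquHom} and \ref{EqH} through Proposition \ref{barj}. The only cosmetic difference is that the paper phrases the high-degree argument as two opposite inequalities (one from each proposition) rather than as an explicit bijection $r\leftrightarrow r+eb_1$, but the content is identical.
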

\begin{proof} First part follows since syzygies of $\bar I(\a + j)$ and $\bar I(\a + j+b_1)$ of degrees at most $n + \reg J(\a)$  are the syzygies of $J(\a)$ by Lemma \ref{separation}. 

Assume that $l >n+ \reg J(\a)$. By Proposition \ref{EquHom}, if $\Delta_{l,r}(j)$ has non-trivial homology groups, then $\Delta_{l,r}(j) = \Delta_{l+e,r+eb_1}(j+b_1).$ Thus, by Proposition \ref{barj}, it follows that
\begin{align*}
\b_{i,l} (\bar I(\a+j)) &= \sum_{r\ge 0} \dim_K \tilde H_i (\Delta_{l,r}(j)) \\
& = \sum_{r \ge 0} \dim_K \tilde H_i (\Delta_{l+e,r+eb_1}(j+b_1))\\
&\le \b_{i,l+e}(\bar I(\a + j+b_1)).
\end{align*}

Moreover, by Proposition \ref{EqH}, if $\Delta_{l+e,r}(j+b_1)$ has non-trivial homology groups, then $\Delta_{l+e,r}(j+b_1) = \Delta_{l,r-eb_1}(j).$ Thus, by Proposition \ref{barj}, it follows that
\begin{align*}
\b_{i,l+e}(\bar I(\a+j+b_1)) & = \sum_{r \ge eb_1} \dim_K \tilde H_i(\Delta_{l,r}(j+b_1))\\
& = \sum_{r\ge eb_1} \dim_K \tilde H_i(\Delta_{l,r-eb_1}(j))\\
& \le \b_{i,l} (\bar I(\a+j)).
\end{align*}

Therefore, $\b_{i,l}(\bar I(\a+j)) = \b_{i,l+e}(\bar I(\a+j+b_1)).$
\end{proof}

\begin{Remark}\label{Rem} Note that to establish results in this section and section \ref{doublecone}, we only require that inequality \eqref{reg} and inequality \eqref{Eq2.1} hold true for $k$. Since these inequalities are still valid when we replace $k$ and $k - a_n$, the periodicity of betti numbers of $\bar I(\a +j)$ happens when $j > N - a_n$. 
\end{Remark}
As a corollary, we have
\begin{Corollary} If $j > N$, then
$$\reg \bar I(\a+j+b_1) = \reg \bar I(\a+j) + e.$$
In particular, $\reg \bar I(\a + j)$ is quasi-linear in $j$ when $j > N$.
\end{Corollary}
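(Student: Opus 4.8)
The plan is to deduce both statements directly from Theorem~\ref{Per} together with the separation of the Betti table established in Lemma~\ref{separation}. Recall that $\reg \bar I(\a+j) = \sup\{\, l - i : \b_{i,l}(\bar I(\a+j)) \neq 0\,\}$. By Lemma~\ref{separation} (applied with $j > N$, which also forces $j+b_1 > N$), the nonzero Betti numbers of $\bar I(\a+j)$ split into a ``low degree part'', consisting of those $\b_{i,l}$ with $l \le n + \reg J(\a)$, which coincide with the Betti numbers of $J(\a)$ and hence are independent of $j$, and a ``high degree part'' with $l > n + \reg J(\a)$. The first half of Theorem~\ref{Per} says the low degree part is unchanged when passing from $j$ to $j+b_1$; the second half says the high degree part is shifted in the $l$-coordinate by $e = d/\gcd(d,a_n+j)$, with $i$ fixed: $\b_{i,l}(\bar I(\a+j)) = \b_{i,l+e}(\bar I(\a+j+b_1))$ for $l > n+\reg J(\a)$.

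First I would argue that the regularity is achieved in the high degree part, i.e.\ that $\reg \bar I(\a+j) > \reg J(\a)$, so the low degree part (whose contribution to regularity is at most $\reg J(\a)$, being the Betti table of $J(\a)$) is irrelevant to the supremum. This is where I would need a genuine input: since $j > N$, Lemma~\ref{separation} tells us $\bar I(\a+j)$ has a minimal generator involving $x_0$ of degree $> n + \reg J(\a)$, so there is a nonzero $\b_{1,l}(\bar I(\a+j))$ with $l > n + \reg J(\a)$, contributing $l - 1 \ge n + \reg J(\a) > \reg J(\a)$ to the regularity. Hence $\reg \bar I(\a+j)$ is attained by a Betti number in the high degree part, for every $j > N$ (and likewise for $j + b_1$).

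Granting that, the computation is immediate. Writing the regularity of $\bar I(\a+j+b_1)$ as a supremum over pairs $(i,l)$ with $\b_{i,l}(\bar I(\a+j+b_1)) \neq 0$ and $l > n + \reg J(\a)$, each such pair is of the form $(i, l'+e)$ with $\b_{i,l'}(\bar I(\a+j)) \neq 0$ and $l' > n + \reg J(\a)$ (by the second half of Theorem~\ref{Per}, read in reverse via $\Delta_{l,r}(j+b_1) = \Delta_{l-e,r-eb_1}(j)$ as in Proposition~\ref{EqH}), and conversely. Thus
\[
\reg \bar I(\a+j+b_1) = \sup_{\substack{\b_{i,l'}(\bar I(\a+j))\neq 0 \\ l' > n + \reg J(\a)}} \big( (l'+e) - i \big) = e + \reg \bar I(\a+j).
\]
For the ``quasi-linear'' assertion: iterating this relation $p$ times gives $\reg \bar I(\a + j + p b_1) = \reg \bar I(\a+j) + \sum_{q=0}^{p-1} e_q$ where $e_q = d/\gcd(d, a_n + j + q b_1)$; since $d \mid b_1$, the value $\gcd(d, a_n + j + q b_1) = \gcd(d, a_n+j)$ is constant in $q$, so all $e_q$ equal a single constant $e$, and $\reg \bar I(\a + j + p b_1) = \reg\bar I(\a+j) + pe$ is linear in $p$ along each residue class of $j$ modulo $b_1$; combined with the fact that $\reg\bar I(\a+j')$ for the $b_1$ starting values $j' = N+1, \dots, N+b_1$ are finite, this is exactly the statement that $\reg \bar I(\a+j)$ is eventually quasi-linear (piecewise linear with period $b_1$) in $j$.

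The only real obstacle is the first step --- confirming that the regularity lives in the high degree part so that the shift in Theorem~\ref{Per} fully controls it --- but this follows cleanly from Lemma~\ref{separation}, as sketched. Everything else is bookkeeping with the decomposition of the Betti table.
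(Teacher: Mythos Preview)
Your argument is essentially the paper's own: invoke Theorem~\ref{Per} to shift the high-degree part by $e$, and check that the regularity is actually attained there by exhibiting a minimal generator involving $x_0$. Two small corrections: Lemma~\ref{separation} does \emph{not} assert that such a generator exists---it only says that \emph{if} one exists then it has large degree; the paper instead observes that $I(\a+j)$ always has at least one inhomogeneous minimal generator (else the affine monomial curve would be defined by a homogeneous ideal, hence be a line), whose homogenization is the required generator of $\bar I(\a+j)$ involving $x_0$. Also, a minimal generator contributes to $\b_{0,l}$ of the ideal, not $\b_{1,l}$; this only strengthens your inequality $l-i>\reg J(\a)$. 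With those fixes your argument goes through and matches the paper's.
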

\begin{proof}By Theorem \ref{Per}, it suffices to show that there is at least one minimal binomial generator of $\bar I(\a+j)$ involving $x_0$. This is always the case, since $I(\a+j)$ always contains at least one inhomogeneous minimal generator.
\end{proof}

%%%%%%%%%%%%%%%%%%%%%%%%5
%%%%%%%%%%%%%%%%%%%%%%%%%55
%%%%%%%%%%%%%%%%%%%%%%%%%%
\section{Periodicity of betti numbers of affine monomial curves}\label{general}
The main result of this section is Theorem \ref{Inh}, where we prove that the total betti numbers of $I(\a+j)$ are equal to those of $\bar I(\a + j)$ when $j > N$. From that and the eventual periodicity of betti numbers of $\bar I(\a+j)$ in section \ref{proj}, we prove the  main theorem.

Fix $j > N$. We simply denote $\Delta_{l,r}(j)$ by $\Delta_{l,r}$. Denote by $(\a+j)$ the semigroup generated by $k-b_1, ..., k-b_{n-1}, k$. The ideal $I(\a+j)$ is the defining ideal of the semigroup ring $K[(\a+j)]$. In this setting, Definition \ref{divisorcomplex} gives 

\begin{Definition}\label{deltam} For each $m \in (\a + j)$, let $\Delta_m$ be the simplicial complex on the vertices $\{1,...,n\}$ such that $F \subseteq \{1,...,n\}$ is a face of $\Delta_m$ if and only if the equation 
\begin{equation}\label{Eq4.2}
\sum_{i=1}^n y_i (k-b_i) = m
\end{equation}
has a non-negative integer solution $y = (y_1,..., y_n)$ such that $F \subseteq \supp y =\{i: y_i > 0\}$.
\end{Definition}

In considering the betti numbers of $I(\a+j)$, we will use the following grading coming from the semigroup $(\a + j)$. 
\begin{Definition}[$(\a+j)$-grading] The $(\a+j)$-grading on $R= K[x_1,...,x_n]$ is given by $\deg x_i = k - b_i$.
\end{Definition}
When $R$ is endowed with $(\a+j)$-grading, Theorem \ref{trans} gives
\begin{Proposition}\label{bettij} For each $i$ and each $m$, we have
$$\b_{im}(I(\a+j)) = \dim_K \Tor^R_i(I(\a+j),K)_m = \dim_K \tilde H_i (\Delta_m),$$
where $\Tor^R_i(I(\a+j),K)_m$ is the $(\a+j)$-degree $m$ part of $\Tor^R_i(I(\a+j),K).$
\end{Proposition}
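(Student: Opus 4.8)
The plan is to derive Proposition \ref{bettij} as the specialization of Theorem \ref{trans} to the affine semigroup $V = (\a+j)$, viewed inside $\mathbb{N} = \mathbb{N}^1$. The first step is bookkeeping: set $\v_i = k - b_i$ for $i = 1, \ldots, n$. Since $k = a_n + j$ and $b_i = a_n - a_i$ we have $\v_i = a_i + j$, and $b_n = 0$ gives $\v_n = k$; thus $\v_1 < \cdots < \v_n$ are exactly the exponents $a_1 + j < \cdots < a_n + j$ parametrizing $C(\a + j)$. Hence the semigroup generated by $\v_1, \ldots, \v_n$ is $(\a+j)$, the semigroup ring $K[V]$ equals $K[(\a+j)]$, and its defining ideal $I(V) \subseteq R = K[x_1, \ldots, x_n]$ --- the kernel of the map $x_i \mapsto t^{a_i + j}$ --- is $I(\a+j)$. (It is irrelevant whether $\v_1, \ldots, \v_n$ form a minimal system of semigroup generators; Theorem \ref{trans} imposes no such condition.)

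The second step is to match the grading and the simplicial complexes. By construction the multigrading of $R$ used in Theorem \ref{trans}, namely $\deg x_i = \v_i = k - b_i$, is exactly the $(\a+j)$-grading introduced above. For $m \in V$, unwinding Definition \ref{divisorcomplex}: a subset $F \subseteq \{1, \ldots, n\}$ is a face of $\Delta_\v$, where $\v$ is the element of $V$ equal to $m$, if and only if $m - \sum_{i \in F} \v_i \in V$, that is, if and only if there are non-negative integers $y_1, \ldots, y_n$ with $m - \sum_{i \in F}(k - b_i) = \sum_{i=1}^n y_i(k - b_i)$. Absorbing the finite sum $\sum_{i \in F}(k - b_i)$ into the right-hand side, this is equivalent to: equation \eqref{Eq4.2} has a non-negative solution $y$ with $F \subseteq \supp y$ --- which is precisely the defining condition of $\Delta_m$ in Definition \ref{deltam}. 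Therefore $\Delta_\v = \Delta_m$.

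Putting the two steps together, Theorem \ref{trans} applied to $V = (\a+j)$ yields $\b_{i,m}(I(\a+j)) = \dim_K \tilde H_i(\Delta_m)$ for every $m \in (\a+j)$; the middle equality $\b_{i,m}(I(\a+j)) = \dim_K \Tor_i^R(I(\a+j), K)_m$ is simply the definition of the $(\a+j)$-degree-$m$ graded component of $\Tor$, bearing in mind that under the $(\a+j)$-grading the total Betti number decomposes as $\b_i = \sum_m \b_{i,m}$. I do not expect any genuine obstacle here: the substantive input is \cite[Proposition~1.1]{BH}, packaged as Theorem \ref{trans}, which is invoked as a black box; the only points needing attention are the translation between the two phrasings of the squarefree divisor complex and the elementary identity $k - b_i = a_i + j$.
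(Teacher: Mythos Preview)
Your proof is correct and follows exactly the paper's approach: the paper states Proposition~\ref{bettij} immediately after introducing the $(\a+j)$-grading with the single remark ``When $R$ is endowed with $(\a+j)$-grading, Theorem~\ref{trans} gives'' and offers no further argument. Your write-up simply spells out the bookkeeping (identifying $\v_i = k - b_i = a_i + j$, matching the grading, and checking that Definition~\ref{divisorcomplex} specializes to Definition~\ref{deltam}) that the paper leaves implicit.
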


Note that for each pair $(l,r)$ corresponding to an element of $\overline{\a+j}$, $lk-r$ is an element of $(\a+j)$. To prove the equality of total betti numbers of $I(\a+j)$ and of $\bar I(\a +j)$, we prove the equality of homology groups of $\Delta_{lk-r}$ and of $\Delta_{l,r}$. More precisely, applying the double cone structure, we will prove that for each $l > n + \reg J(\a)$, if $\Delta_{l,r}$ has non-trivial homology groups then $\Delta_{lk-r}$ is obtained from $\Delta_{l,r}$ by deleting the vertex $0$. The double cone structure again applies to prove that $\Delta_{l,r}$ and $\Delta_m$ have the same homology groups.

Similar to Lemma \ref{separation}, we first establish the separation in the betti table of $I(\a+j)$. 
\begin{Lemma}\label{inhsep} Assume that $j > N$. Any minimal binomial inhogeneous generator of $I(\a + j)$ has $(\a+j)$-degree larger than $k(n + \reg J(\a))$. In particular, any syzygy of $I(\a + j)$ of $(\a+j)$-degree at most $k(n + \reg J(\a))$ is a syzygy of $J(\a)$. 
\end{Lemma}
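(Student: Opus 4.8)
The plan is to mimic the proof of Lemma \ref{separation}, transporting the statement about $\bar I(\a+j)$ there to a statement about $I(\a+j)$ via dehomogenization. First I would recall from Corollary \ref{gens} that any minimal binomial inhomogeneous generator of $I(\a+j)$ has the form $x_1^u f - g x_n^v$ with $u,v>0$ and $f,g$ monomials in $x_2,\dots,x_{n-1}$; equivalently, it is the dehomogenization of a minimal binomial homogeneous generator of $\bar I(\a+j)$ involving $x_0$, namely $x_1^u f - x_0^e g x_n^v$. (Alternatively, one can argue directly: a minimal inhomogeneous generator $x^{\mathbf u}-x^{\mathbf v}$ with $\supp\mathbf u\cap\supp\mathbf v=\emptyset$ lies in the $(\a+j)$-degree $m=\sum u_i(k-b_i)=\sum v_i(k-b_i)$, and one of the two sides, say the second, must satisfy $\sum v_i = |\mathbf v| > |\mathbf u|$, so that the homogenization picks up a positive power of $x_0$.)

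Next I would pin down the $(\a+j)$-degree. By Definition \ref{deltalr} and Proposition \ref{barj}, the homogeneous generator $x_1^u f - x_0^e g x_n^v$ of $\bar I(\a+j)$ lives in some $\Delta_{l,r}(j)$ with non-trivial homology, where $l>n+\reg J(\a)$ by Lemma \ref{separation}. Its dehomogenization $x_1^u f - g x_n^v$ lies in $I(\a+j)$ in $(\a+j)$-degree $m = u(k-b_1) + \deg f \cdot(\text{weights}) = \sum v_i(k-b_i)$. The cleanest bookkeeping is to note $m = lk - r$, since $|\mathbf w_i| = k$ forces the first coordinate $r = \sum y_i b_i$ and the total $l = \sum y_i$ to satisfy $\sum y_i(k-b_i) = lk - r$. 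Then Lemma \ref{Lem2.1} gives $ek \le r < ek + dc + B$, and since we are in the regime $l > n+\reg J(\a)$, I get
$$m = lk - r > (n+\reg J(\a))k - (ek + dc + B).$$
To conclude $m > k(n+\reg J(\a))$ I need the high-degree $\Delta_{l,r}$'s to satisfy the stronger inequality $l \ge e + (r)/k$-type bound; more precisely, combining $l \ge r/b_1$ (Lemma \ref{Lem2.1}) with $r \ge ek$ and the size estimates \eqref{reg}, \eqref{Eq2.1}, one shows $lk - r = \sum y_i(k-b_i) \ge$ (number of terms on the homogeneous side) $\cdot (k - b_1) > k(n+\reg J(\a))$ because that side has more than $k/b_1 > n+\reg J(\a)$ variables (this is exactly the computation $\deg f = \sum v_i \ge k/b_1$ from Lemma \ref{separation}, and each contributing variable $x_i$ has weight $k-b_i \ge k - b_1 \ge \dots$; one must be slightly careful that $k-b_1$ times $k/b_1$ still exceeds $k(n+\reg J(\a))$, which follows since $k/b_1 \gg n + \reg J(\a)$ by \eqref{reg} with room to spare from \eqref{Eq2.1}).

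The second sentence then follows formally, exactly as in Lemma \ref{separation}: a minimal free resolution of $I(\a+j)$ built from binomial generators, truncated in $(\a+j)$-degrees $\le k(n+\reg J(\a))$, can only involve the homogeneous generators (those in $J(\a)$), so every syzygy in that degree range is a syzygy of $J(\a)$. The main obstacle I anticipate is purely the arithmetic of the first part — making sure the bound $m > k(n+\reg J(\a))$ comes out with the factor of $k$ in the right place rather than $m > $ something of order $k$ minus lower-order terms; the hypothesis $j > N$, i.e. \eqref{reg} and \eqref{Eq2.1}, is designed precisely so that the $dc + B$ and $ek$ corrections are swamped, but I would want to write the chain of inequalities carefully, tracking that the homogeneous side of the generator has $> k/b_1$ variables each of weight $\ge k - b_1$, and that $(k/b_1)(k-b_1) = k(k-b_1)/b_1 > k(n+\reg J(\a))$ by \eqref{reg}.
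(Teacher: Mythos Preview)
Your approach is correct and in fact slightly more direct than the paper's. Both arguments reduce to the inequality $m \ge l(k-b_1)$, where $l$ is the standard degree of the homogenization and $m$ is the $(\a+j)$-degree of the inhomogeneous generator (equivalently, $m = lk - r \ge lk - lb_1$ since $r = \sum y_i b_i \le lb_1$). The difference is in how $l$ is bounded below. The paper invokes Theorem~\ref{Per} together with Remark~\ref{Rem} to obtain $l \ge n + \reg J(\a) + e$, and then checks $e(k-b_1) \ge eN > b_1(n+\reg J(\a))$ using the definition of $N$. You instead lift the estimate $l \ge k/b_1$ directly from the proof of Lemma~\ref{separation}, bypassing the periodicity theorem entirely.

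Your final arithmetic is right, though you should state it cleanly rather than gesture at ``room to spare'': the needed inequality $(k/b_1)(k-b_1) > k(n+\reg J(\a))$ is equivalent to $k > b_1(n+\reg J(\a)+1)$, and this holds because $k = a_n + j > a_n + N \ge a_n + b_1(n+\reg J(\a))$ together with $a_n > a_n - a_1 = b_1$. Your first attempt via $m = lk - r$ and the bound $r < ek + dc + B$ from Lemma~\ref{Lem2.1} does not by itself close the gap, as you noticed; it is the $l(k-b_1)$ bound that carries the argument. The advantage of your route is that it is self-contained and does not require the machinery of Section~\ref{proj}; the paper's route has the minor advantage that the arithmetic verification is marginally cleaner once the $+e$ is in hand.
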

\begin{proof} By Theorem \ref{Per} and Remark \ref{Rem}, each inhomogeneous generator of $I(\a + j)$ has degree at least $n + \reg J(\a)  + e$. Thus its $(\a+j)$-degree is at least 
$$(n + e + \reg J(\a)) (k-b_1) > k(n + \reg J(\a))$$
since $k = a_n + j > b_1 + N$, and by the choice of $N$ in \eqref{Eq1}, 
$$e(k-b_1) \ge eN > b_1 (n + \reg J(\a)).$$
The second statement follows immediately.
\end{proof}

The following technical lemma says that for each pair $(l,r)$ for which $l > n + \reg J(\a)$ and $\Delta_{l,r}$ has non trivial homology groups, the corresponding simplicial complex $\Delta_{lk - r}$ is obtained from $\Delta_{l,r}$ by the deletion of the vertex $0$. From Lemma \ref{Lem2.2} and Lemma \ref{Lem2.3}, we see that $\Delta_{l,r}$ and $\Delta_m$ have the same homology groups.

\begin{Lemma}\label{Lem4.1} Assume that $l > n + \reg J(\a)$ and $\Delta_{l,r}$ has non-trivial homology groups. Let $m = lk - r$. If $(y_1, ..., y_n)$ is a non-negative integer solution of \eqref{Eq4.2} then $\sum_{i=1}^n y_i = l -e$ or $\sum_{i=1}^n y_i  = l$. In particular, $\Delta_m$ is obtained from $\Delta_{l,r}$ by the deletion of the vertex $0$.
\end{Lemma}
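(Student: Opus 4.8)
We want to show that for a pair $(l,r)$ with $l > n + \reg J(\a)$ and $\Delta_{l,r}$ having non-trivial homology, every non-negative solution $y = (y_1, \dots, y_n)$ of $\sum_{i=1}^n y_i(k - b_i) = m = lk - r$ satisfies $|y| := \sum y_i \in \{l - e, l\}$, and consequently $\Delta_m$ is obtained from $\Delta_{l,r}$ by deleting the vertex $0$. The first observation is that a solution $y$ of \eqref{Eq4.2} with $|y| = s$ is \emph{equivalent} to a solution of the homogenized equation $y_0 k + \sum_{i=1}^n y_i b_i = r$ with $y_0 = l - s$ and $|y| + y_0 = l$: indeed $\sum y_i (k - b_i) = m = lk - r$ rearranges to $(l - |y|)k + \sum y_i b_i = r$. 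So solutions of \eqref{Eq4.2} with total sum $s$ correspond to faces of $\Delta_{l,r}$ arising from solutions with $y_0 = l - s$. The task is therefore to show the only admissible values of $y_0$ are $0$ and $e$; equivalently, the equation $y_0 k + \sum_{i=1}^n y_i b_i = r$ with $\sum_{i=0}^n y_i = l$ has no solution with $0 < y_0 < e$ (automatically excluded since $e \mid y_0$ by the divisibility argument in Lemma \ref{Lem2.1}) and no solution with $y_0 > e$. The bound $y_0 \le r/k < (ek + dc + B)/k < e+1$ from Lemma \ref{Lem2.1}, valid because $r < ek + dc + B$ there, forces $y_0 \in \{0, e\}$. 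This is the heart of the argument, and the only point requiring care is confirming that the range bound on $r$ from Lemma \ref{Lem2.1} is being applied to exactly the right $r$ — but that $r$ is the one attached to our pair $(l,r)$, so it applies directly.

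The second half is to deduce the simplicial-complex identity. A face $F$ of $\Delta_{lk-r} = \Delta_m$ comes from a solution of \eqref{Eq4.2} with $F \subseteq \supp y$; by the equivalence above this gives $y_0 = l - |y| \in \{0, e\}$, but in both cases $\supp y \subseteq \{1, \dots, n\}$ equals $\supp$ of the corresponding $\Delta_{l,r}$-solution restricted away from $0$, so $F \in \Delta_{l,r}$ and $0 \notin F$ — i.e., $F$ is a face of the deletion $\mathrm{del}_{\Delta_{l,r}}(0)$. Conversely, a face $F$ of $\mathrm{del}_{\Delta_{l,r}}(0)$ with $0 \notin F$ comes from a solution with $F \subseteq \supp y$, $y_0 \in \{0, e\}$, and reversing the rearrangement gives a solution of \eqref{Eq4.2} with the same support, so $F \in \Delta_m$. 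Hence $\Delta_m = \mathrm{del}_{\Delta_{l,r}}(0)$ exactly.

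The one genuine subtlety — and the step I expect to be the main obstacle — is making sure that the correspondence between $\Delta_{l,r}$-faces and $\Delta_m$-faces is set-theoretically tight: a priori a face $F$ of $\Delta_{l,r}$ not containing $0$ might be \emph{properly} contained in the support of every solution (so $F$ is a non-facet face), while the deletion could in principle acquire $F$ as a facet, and one must check nothing is lost or gained when passing to $\Delta_m$. This is handled cleanly by the observation that the \emph{set of solutions} of \eqref{Eq4.2} is in bijection (via the affine shift $y_0 = l - |y|$) with the set of solutions of the homogenized equation having $y_0 \in \{0, e\}$, so supports match on the nose; the double cone structure (Lemmas \ref{Lem2.2} and \ref{Lem2.3}) is not needed for the equality itself but is what is invoked afterwards to conclude $\tilde H_i(\Delta_m) = \tilde H_i(\Delta_{l,r})$, since deleting an apex vertex $0$ of the cone $\{0,n\} * (\cdots)$ from a double cone $\bigl(\{0,n\} * L_1\bigr) \cup \bigl(\{1\} * L_2\bigr)$ leaves a homotopy-equivalent complex (the remaining piece still deformation-retracts appropriately onto the $\{1\}$-cone's subcomplex along $n$). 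I would first record the affine-shift bijection explicitly, then invoke Lemma \ref{Lem2.1} to pin $y_0 \in \{0, e\}$, then read off the deletion identity, and finally cite the double cone structure for the homological consequence.
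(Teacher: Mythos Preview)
Your affine-shift identification $y_0 = l - |y|$ is exactly the right way to organize the argument, and once $y_0 \in \{0,e\}$ is established the deletion identity $\Delta_m = \mathrm{del}_{\Delta_{l,r}}(0)$ follows cleanly just as you describe. However, there is a genuine gap in how you pin down $y_0$. You argue that $e \mid y_0$ and that $y_0 \le r/k < e+1$, and conclude $y_0 \in \{0,e\}$. But nothing you have said rules out $y_0 < 0$: the quantity $y_0 = l - |y|$ is \emph{not} a priori non-negative, since $(y_1,\dots,y_n)$ is merely a non-negative solution of $\sum y_i(k-b_i) = m$ with no constraint on $|y|$. Lemma~\ref{Lem2.1} is stated for non-negative solutions of the homogenized equation and its conclusion ``$y_0 > 0 \Rightarrow y_0 = e$'' says nothing about the case $y_0 \le -e$. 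So as written your argument does not exclude $|y| \ge l+e$.

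The paper closes this gap with a separate estimate: from $k - b_i \ge k - b_1$ one gets $|y| \le m/(k-b_1) = (lk-r)/(k-b_1)$, and the inequality $(lk-r)/(k-b_1) < l+e$ is equivalent to $l b_1 + e b_1 < r + ek$. This last inequality does \emph{not} follow from the bounds in Lemma~\ref{Lem2.1} alone (which give only $r \ge ek$ and $l \ge r/b_1$); it requires the upper bound $l < r/b_1 + (dc+b_1)/b_{n-1} + n$ of Lemma~\ref{Lem3.1} together with inequality~\eqref{Eq2.1}. Once you add this step, your argument and the paper's coincide. Your treatment of the simplicial-complex identity and the concluding remark about the double cone structure are correct.
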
 
\begin{proof} Note that $\sum_{i=1}^n y_i(k - b_i) = lk -r$ is equivalent to 
$$\left ( \sum_{i=1}^n y_i -l\right ) k = \sum_{i=1}^n y_i b_i - r.$$
Since the right hand side is divisible by $d$, the left hand side is divisible by $d$. Therefore $\sum_{i=1}^n y_i - l$ is divisible by $e$. Thus it suffices to show that $\sum_{i=1}^n y_i \ge l-e$ and $\sum_{i=1}^n y_i < l+e.$ 

By Lemma \ref{Lem2.3}, and inequality \eqref{Eq2.1}, 
$$\sum_{i=1}^n y_i \ge \frac{lk -r}{k}\ge \frac{lk - ek - B - dc}{k} > l-e-1.$$
 
Moreover, 
$$\sum_{i=1}^n y_i \le \frac{lk - r}{k-b_1}.$$
To prove that $\frac{lk-r}{k-b_1} < l+e$ is equivalent to prove that
$$lb_1 + eb_1 < r + ek.$$
This follows from Lemma \ref{Lem3.1}, and inequality \eqref{Eq2.1} since
$$lb_1 + eb_1 < r + \left ( \frac{dc+b_1}{b_{n-1}} + n+e \right ) b_1 < r + \frac{k}{b_1b_2}b_1 \le r + ek.$$
Therefore $\sum_{i=1}^n y_i$ is either $l-e$ or $l$. 

Let $\Delta$ be the simplicial complex obtained by deleting the vertex $0$ of the simplicial complex $\Delta_{l,r}$. From Definition \ref{deltalr} and Definition \ref{deltam}, $\Delta \subseteq \Delta_m$. It suffices to show that $\Delta_m \subseteq \Delta.$ Let $F$ be any facet of $\Delta_m$. By Definition \ref{deltam}, there exists a solution $(y_1, ..., y_n)$ of the equation $\sum_{i=1}^n y_i(k-b_i) = m$ such that $\supp y = F$. We have two cases:

If $\sum_{i=1}^n y_i = l$, then $y$ is also a solution of the equation $\sum_{i=1}^n y_i b_i = r$. By Definition \ref{deltalr}, $F$ is a face of $\Delta_{l,r}$ which is also a face of $\Delta$. 

If $\sum_{i=1}^n y_i = l-e$, then $z = (e,y_1,...,y_n)$ is a solution of the equation $y_0 k +\sum_{i=1}^n y_ib_i = r$. By Definition \ref{deltalr}, $F \cup \{0\}$ is a face of $\Delta_{l,r}$, thus $F$ is a face of $\Delta$.
\end{proof}

\begin{Lemma}\label{Lem4.2} Assume that $l_1, l_2 >n+\reg J(\a)$ and $\Delta_{l_1,r_1}$, $\Delta_{l_2,r_2}$ have non-trivial homology groups. Then $l_1k - r_1 \neq l_2k - r_2.$
\end{Lemma}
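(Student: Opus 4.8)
The plan is to show that the map $(l,r) \mapsto lk - r$ is injective on the set of pairs with $l > n + \reg J(\a)$ and $\Delta_{l,r}$ having non-trivial homology, by combining the constraint from Lemma \ref{Lem2.1} (which pins down $r$ to a window of width $dc + B$ depending only on the residue class $ek$, together with $l \ge r/b_1$) with Lemma \ref{Lem3.1} (which bounds $l$ above in terms of $r$). First I would suppose for contradiction that $l_1 k - r_1 = l_2 k - r_2$ with both pairs admissible, and without loss of generality $l_1 \ge l_2$; set $\delta = l_1 - l_2 \ge 0$, so $r_1 - r_2 = \delta k$. By Lemma \ref{Lem2.1} applied to each pair we have $ek \le r_i < ek + dc + B$, hence $|r_1 - r_2| < dc + B < k$ (the last inequality holds since $k > N \ge B + dc$ by \eqref{Eq2.1}, as $k/(b_1 b_2) > (dc+b_1)/b_{n-1} + B$ forces $k > dc + B$). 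Therefore $\delta k = r_1 - r_2 < k$, forcing $\delta = 0$, i.e. $l_1 = l_2$ and then $r_1 = r_2$.

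So the only real content is showing $\delta \neq 0$ is impossible, which reduces to the numerical estimate $dc + B < k$; this is exactly where inequality \eqref{Eq2.1} is used, together with the positivity of the $b_i$. I would spell this out: from \eqref{Eq2.1}, $k > b_1 b_2\big((dc+b_1)/b_{n-1} + B\big) > b_1 b_2 B \ge B > dc + B - dc$, and a slightly more careful bound (using $b_1 b_2 \cdot (dc+b_1)/b_{n-1} \ge dc$, which holds because $b_1 b_2 \ge b_{n-1}$ as $b_1 \ge b_2 \ge \cdots \ge b_{n-1}$ and $dc + b_1 > dc$) gives $k > dc + B$ comfortably. Actually the cleanest route is: $k > b_1 b_2 B \ge B$ and $k > b_1 b_2 (dc+b_1)/b_{n-1} \ge dc + b_1 > dc$, and since $b_1 b_2 \ge 2$ we can split $k$ as a sum of two such quantities to get $k > dc + B$; I would just cite \eqref{Eq2.1} and do the two-line arithmetic.

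I do not expect a genuine obstacle here — the lemma is essentially a pigeonhole statement once Lemma \ref{Lem2.1} has localized $r$ to an interval of length less than $k$. The one point requiring a little care is making sure the interval bound from Lemma \ref{Lem2.1}, namely $ek \le r < ek + dc + B$, is stated with the same $e$ for both pairs; this is automatic since $e = d/\gcd(d,k)$ depends only on $k$ (equivalently only on $j$), which is fixed throughout the section. Thus the argument is: both $r_i$ lie in the half-open interval $[ek, ek + dc + B)$ of length $dc + B < k$, their difference is a multiple of $k$, hence zero, hence $l_1 = l_2$ and $r_1 = r_2$, contradicting the assumption that the two pairs are distinct. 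I would present it in roughly that order, with the estimate $dc + B < k$ isolated as the one computation.
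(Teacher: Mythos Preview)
Your proposal is correct and follows essentially the same route as the paper: assume $l_1k-r_1=l_2k-r_2$, use Lemma~\ref{Lem2.1} to confine both $r_i$ to the interval $[ek,\,ek+dc+B)$, deduce $|r_1-r_2|<dc+B<k$ from inequality~\eqref{Eq2.1}, and conclude $r_1=r_2$, hence $l_1=l_2$. The paper's version is terser (it simply cites \eqref{Eq2.1} for $dc+B<k$ without the arithmetic you spell out, and does not bother introducing $\delta$), but the argument is identical in substance.
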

\begin{proof} Assume that $l_1k-r_1 = l_2k-r_2$. It follows that $k|r_1 - r_2$. Moreover, by Lemma \ref{Lem2.1}, we have
$$ek \le r_1, r_2 \le ek + dc +B.$$
Together with inequality \eqref{Eq2.1} this implies
$$|r_1-r_2| \le dc +B < k$$
Thus $r_1 = r_2$ and so $l_1 = l_2$. 
\end{proof}

\begin{Theorem}\label{Inh} For each $i$, $\b_i(\bar I(\a+j)) = \b_i(I(\a+j))$. 
\end{Theorem}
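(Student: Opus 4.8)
The plan is to deduce $\b_i(\bar I(\a+j)) = \b_i(I(\a+j))$ by comparing the multigraded Betti numbers via the two presentations in Proposition \ref{barj} and Proposition \ref{bettij}. Recall $\b_i(\bar I(\a+j)) = \sum_l \b_{i,l}(\bar I(\a+j)) = \sum_{l,r} \dim_K \tilde H_i(\Delta_{l,r})$ and $\b_i(I(\a+j)) = \sum_m \b_{im}(I(\a+j)) = \sum_m \dim_K \tilde H_i(\Delta_m)$. So it suffices to set up a bijection between the index sets $\{(l,r) : \tilde H_i(\Delta_{l,r}) \neq 0\}$ and $\{m : \tilde H_i(\Delta_m) \neq 0\}$ that preserves the homology dimensions.

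First I would split the pairs $(l,r)$ (and elements $m$) according to whether the relevant syzygies are low degree (at most $n + \reg J(\a)$, resp.\ $(\a+j)$-degree at most $k(n + \reg J(\a))$) or high degree. By Lemma \ref{separation} and Lemma \ref{inhsep}, the low-degree parts of both $\bar I(\a+j)$ and $I(\a+j)$ are exactly the syzygies of $J(\a)$, so those contributions to $\b_i$ agree; one just has to observe that the squarefree divisor complexes in the low range for the two semigroups coincide with those of $J(\a)$ (the vertex $0$ never appears and the equations defining the faces reduce modulo the homogeneous relations). Hence it remains to match the high-degree contributions. For the high range, take any $(l,r)$ with $l > n + \reg J(\a)$ and $\tilde H_i(\Delta_{l,r}) \neq 0$; set $m = lk - r$. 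By Lemma \ref{Lem4.1}, $\Delta_m$ is $\Delta_{l,r}$ with the vertex $0$ deleted, and by Lemma \ref{Lem2.2} and Lemma \ref{Lem2.3} the double cone structure of $\Delta_{l,r}$ (cone over $\{0,n\}$ union cone over $\{1\}$, glued along their common part) forces $\tilde H_i(\Delta_{l,r}) \cong \tilde H_i(\Delta_m)$: deleting $0$ turns the first cone into a cone over $\{n\}$, so the result is still a double cone with the same nerve/Mayer–Vietoris data, and in particular the same reduced homology. The map $(l,r) \mapsto m = lk - r$ is injective on this set by Lemma \ref{Lem4.2}.

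For surjectivity onto the high-degree $m$'s, I would argue that if $m \in (\a+j)$ supports a nonzero $\tilde H_i(\Delta_m)$ in the high range, then $\Delta_m$ contains a facet and a non-facet forcing (via Lemma \ref{inhsep} translated through Theorem \ref{Per}/Remark \ref{Rem}) that the corresponding solution has $\sum y_i$ taking one of two values differing by $e$; recovering $l$ as the larger value and $r = lk - m$ gives a pair $(l,r)$ with $l > n + \reg J(\a)$, and one checks $\Delta_{l,r}$ has non-trivial homology and maps back to $m$. Equivalently — and more cleanly — I would phrase the whole high-degree comparison as: the two sums $\sum_{l > n+\reg J(\a)} \b_{i,l}(\bar I(\a+j))$ and $\sum_{m > k(n+\reg J(\a))} \b_{im}(I(\a+j))$ are both computed by the same collection of simplicial complexes (up to deleting the vertex $0$, which preserves homology by the double cone structure), the correspondence being $m \leftrightarrow (l,r)$ with $m = lk - r$. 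Summing over $i$-th reduced homology dimensions then yields $\b_i(\bar I(\a+j)) = \b_i(I(\a+j))$.

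The main obstacle is surjectivity of the correspondence $(l,r) \mapsto lk - r$ onto the set of high-degree $m$ with $\tilde H_i(\Delta_m) \neq 0$, i.e.\ making sure that \emph{every} homologically non-trivial $\Delta_m$ in the high range actually arises as a deletion of some $\Delta_{l,r}$ with $l > n + \reg J(\a)$; this requires reconstructing $l$ from $m$ and verifying that the two-valued nature of $\sum y_i$ (forced by the $d$-divisibility argument of Lemma \ref{Lem4.1}) does hold for \emph{all} solutions, not just those coming from $\Delta_{l,r}$. One must also be careful that deleting the vertex $0$ from $\Delta_{l,r}$ genuinely preserves $\tilde H_i$ and does not, say, disconnect the complex or kill top homology; here the explicit double cone description (both before and after deletion it is a union of two cones over a common subcomplex, so its reduced homology is that of the suspension of the intersection) is exactly what rescues the argument.
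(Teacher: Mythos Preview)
Your overall strategy---matching the low-degree contributions via $J(\a)$ (Lemmas \ref{separation} and \ref{inhsep}) and the high-degree ones via the map $(l,r)\mapsto lk-r$ using Lemmas \ref{Lem4.1}, \ref{Lem4.2} and the double cone structure---is precisely the paper's approach, and your handling of injectivity and of the homology isomorphism $\tilde H_i(\Delta_{l,r})\cong\tilde H_i(\Delta_{lk-r})$ is correct.

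The one substantive difference is that you have made the argument harder than necessary by aiming for a \emph{bijection}. The paper never proves surjectivity of $(l,r)\mapsto lk-r$ onto the homologically non-trivial high-degree $m$'s. Instead it uses the elementary fact that $I(\a+j)$ is the dehomogenization of $\bar I(\a+j)$, which immediately gives $\b_i(\bar I(\a+j))\ge\b_i(I(\a+j))$. Then injectivity (Lemma \ref{Lem4.2}) together with the homology isomorphism and equality on the low-degree part yields the reverse inequality $\b_i(I(\a+j))\ge\b_i(\bar I(\a+j))$, and the two inequalities combine to give equality. So what you flag as ``the main obstacle'' simply does not arise in the paper's proof.

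Your sketched route to surjectivity is not obviously wrong, but it would require proving directly, for an arbitrary high-degree $m$ with $\tilde H_i(\Delta_m)\neq 0$, that all solutions of $\sum y_i(k-b_i)=m$ have $\sum y_i\in\{l-e,l\}$ for some fixed $l$. The paper's Lemma \ref{Lem4.1} establishes this only under the hypothesis that $\Delta_{l,r}$ already has non-trivial homology (its bounds come from Lemmas \ref{Lem2.1} and \ref{Lem3.1}, which are statements about $\Delta_{l,r}$), so invoking it for a bare $m$ is circular; you would need an independent structural lemma for $\Delta_m$ that the paper never states. The dehomogenization inequality is the cleaner way out.
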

\begin{proof} Since $I(\a +j)$ is the dehomogenization of $\bar I(\a + j)$, $\b_i(\bar I(\a+j)) \ge \b_i(I(\a+j))$. 

Moreover, by double cone structure in section \ref{doublecone} and Lemma \ref{Lem4.1}, we have if $l > n + \reg J(\a)$ and $\Delta_{l,r}$ has non-trivial homology groups then $\Delta_{l,r}$ and $\Delta_{lk-r}$ have isomorphic homology groups. Together with Proposition \ref{bettij} and Proposition \ref{barj}, we have for each $i$, 
\begin{align*}
\b_i(I(\a + j)) & = \sum_{m \ge 0} \dim_K \tilde H_i (\Delta_m)\\
& = \sum_{m \le k (n+ \reg J(\a))} \dim_K \tilde H_i (\Delta_m) + \sum_{m > k (n + \reg J(\a))} \dim_K \tilde H_i (\Delta_m)\\
& \ge \sum_{l\le n + \reg J(\a)} \dim_K \tilde H_i (\Delta_{l,r}) + \sum_{l > n + \reg J(\a)} \dim_K \tilde H_i (\Delta_{l,r})\\
& = \sum_{l,r} \dim_K \tilde H_i (\Delta_{l,r}) = \b_i(\bar I(\a + j))
\end{align*}
Therefore, $\b_i(\bar I(\a + j)) = \b_i(I(\a + j)).$
\end{proof}

\begin{proof}[Proof of Theorem \ref{main}] Fix $j > N$. By Theorem \ref{Inh}, for each $i$, $\b_i(I(\a+j)) = \b_i(\bar I(\a+j))$. By Theorem \ref{Per}, $\b_i(\bar I(\a+j)) = \b_i(\bar I(\a+j+b_1))$. Thus the betti numbers of $I(\a+j)$ are equal to the corresponding betti numbers of $I(\a+j+b_1)$. 
\end{proof}

Finally, note that in general, the period $b_1$ of the eventual periodicity of betti numbers of $I(\a + j)$ is sharp. In the case $\a$ is an arithmetic sequence it was proven by Gimenez, Sengupta and Srivivasan in \cite{GSS}. In the following, we will show that the period $b_1$ of the eventual periodicity of betti numbers of $I(\a+j)$ is sharp in the case of Bresinsky's sequences. Recall from \cite{B} that, for each $h$, let $\a^h = ((2h-1)2h,(2h-1)(2h+1),2h(2h+1),2h(2h+1)+2h-1)$ be a Bresinsky sequence. Since the minimal homogeneous generators of $I(\a^h +j)$ are the same when $j \gg 0$, it suffices to consider the number of minimal inhomogeneous generators of $I(\a^h + j)$ when $j \gg 0$. For an ideal $I$, we denote by $\mu'(I)$ the number of minimal inhomogeneous generators of $I$. 

Fix $h \ge 2$. We simply denote $\a^h$ by $\a$. In this case, we have $b_1 = 6h-1$, $b_2 = 4h$, $b_3 = 2h-1$ and $B= 12h+3$. Note that $R = K[x_1, x_2,x_3,x_4]$. 

\begin{Lemma}\label{B2} If $j \ge 4b_1b_2(b_2+1)$ then $\mu'(I(\a+j)) = \mu'(I(\a+j+b_1)) \le 6h+1$.
\end{Lemma}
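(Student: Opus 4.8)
The plan is to count the minimal inhomogeneous generators of $I(\a+j)$ directly using the characterization from Corollary \ref{gens}. For $h\ge 2$ and $j$ large, every minimal inhomogeneous generator has the form $x_1^u f - g x_4^v$ with $u,v>0$, $f,g$ monomials in $x_2,x_3$, and $u+\deg f = v + \deg g + e$, where $e = d/\gcd(d,k)$ and $k = a_4+j$. Here $d = \gcd(b_1,b_2,b_3) = \gcd(6h-1,4h,2h-1)$; a short computation shows $d=1$, so $e=1$ and the relation becomes $u+\deg f = v+\deg g+1$. Thus I would first pin down $d=1$ (hence $e=1$), and then observe that the number $\mu'(I(\a+j))$ depends on $j$ only through $k \bmod b_1$ via the underlying semigroup $(\a+j)$ — this, together with Theorem \ref{main} / Theorem \ref{Per}, already gives $\mu'(I(\a+j)) = \mu'(I(\a+j+b_1))$ once $j>N$; I would check $4b_1b_2(b_2+1) > N$ for the Bresinsky sequences using the explicit values $b_1=6h-1$, $b_2=4h$, $b_3=2h-1$, $B=12h+3$, and the bound $\reg J(\a)$ which is controlled since $J(\a)$ is the ideal of a projective monomial curve in $\PP^3$.

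The substantive part is the bound $\mu'(I(\a+j))\le 6h+1$. By Corollary \ref{gens} and the double-cone analysis of Section \ref{doublecone}, each minimal inhomogeneous generator corresponds to a pair $(l,r)$ with $l>n+\reg J(\a)$, $\Delta_{l,r}$ having nontrivial $\tilde H_1$, and $r = ek + (\text{something in the range } [0,dc+B))= k + O(h)$ by Lemma \ref{Lem2.1}. So I would enumerate the possible residues: writing the binomial as $x_1^u x_2^{s} x_3^{t} - x_2^{s'} x_3^{t'} x_4^{v}$, the defining relation in the semigroup $\overline{\a+j}$ reads $uk + s b_2 + t b_3 = k + s' b_2 + t' b_3$ (using $b_1$ for the $x_0$-less side appropriately; more precisely $u b_1 + s b_2 + t b_3 = k + s' b_2 + t' b_3$ after matching $|y|$), which after reduction mod small numbers constrains $(s,t,s',t')$ to a finite list. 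Because $b_2 = 4h$ and $b_3 = 2h-1$ are coprime and of size $\Theta(h)$, the number of minimal solutions to the relevant equation $r = b_1 u + b_2 s + b_3 t$ with $u\ge 1$ and $r$ in a window of width $O(h)$ around $k$ is itself $O(h)$, and a careful accounting should yield exactly the bound $6h+1$. I expect this enumeration — reducing the binomial relation to a bounded Diophantine problem and showing minimality kills all but $\le 6h+1$ of the candidates — to be the main obstacle, since it requires matching the combinatorics of the squarefree divisor complex $\Delta_{l,r}$ (double cone $\{0,n\}*(\cdots)\cup\{1\}*(\cdots)$) against the monomial structure rather than just counting lattice points.

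Concretely, the steps in order are: (1) compute $d=1$, $e=1$ for the Bresinsky sequence and record $b_1=6h-1$, $b_2=4h$, $b_3=2h-1$; (2) verify $4b_1b_2(b_2+1)>N$ so that Theorem \ref{main} applies, giving the equality $\mu'(I(\a+j))=\mu'(I(\a+j+b_1))$; (3) use Corollary \ref{gens} to write every minimal inhomogeneous generator as $x_1^u x_2^{s}x_3^{t} - x_2^{s'}x_3^{t'}x_4^{v}$ with $u+s+t = v+s'+t'+1$; (4) translate into the semigroup relation and use Lemma \ref{Lem2.1} to confine $r$ to the window $[ek, ek+dc+B)$, hence confine the "small" exponents $s,t,s',t'$ to an explicit bounded range; (5) enumerate the resulting finitely many candidate binomials, discard the non-minimal ones (those that are multiples of homogeneous generators or of other inhomogeneous generators), and count, obtaining $\le 6h+1$. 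Step (5) is where the real work lies; for the purpose of the $\le$ bound I only need an upper estimate, so I would bound the count by the number of admissible $(s,t)$ pairs (at most $\Theta(h)$ each) and then tighten using the constraint $u+s+t=v+s'+t'+1$ together with the pigeonhole/coprimality of $b_2,b_3$ to bring the constant down to $6h+1$.
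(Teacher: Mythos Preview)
Your overall strategy matches the paper's: verify $N<4b_1b_2(b_2+1)$ so Theorem~\ref{main} gives the equality $\mu'(I(\a+j))=\mu'(I(\a+j+b_1))$, then use Corollary~\ref{gens} and the window from Lemma~\ref{Lem2.1} to bound the number of minimal inhomogeneous generators. Two points need sharpening.

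\medskip
\textbf{On step (2).} You are right that $J(\a)$ is the ideal of a projective monomial curve in $\PP^3$ (it is the toric ideal of the semigroup generated by $(a_i,1)$), and a Gruson--Lazarsfeld--Peskine--type bound would give $\reg J(\a)=O(h)$, which is more than enough to check $b_1(n+\reg J(\a))<4b_1b_2(b_2+1)$. The paper instead computes $\reg J(\a)=4h$ exactly, by writing down Bresinsky's generators of $J(\a)$, checking they form a grevlex Gr\"obner basis, and showing $\operatorname{in}(J(\a))$ has linear quotients. Either route works, but you should make yours precise; the second term of $N$ is within a constant factor of $4b_1b_2(b_2+1)$, so the inequality is delicate and needs the actual value of $c$ (here $c=(4h-1)(2h-2)$, since $b_1=b_2+b_3$ is redundant in the semigroup).

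\medskip
\textbf{On step (5): the real gap.} Your plan to enumerate quadruples $(s,t,s',t')$ and ``tighten via pigeonhole/coprimality of $b_2,b_3$'' does not yet contain the idea that makes the count come out to $6h+1$. The paper uses the specific relation $x_2x_3-x_1x_4\in J(\a)$ (the first generator in Bresinsky's list) to show that any minimal inhomogeneous generator, after reduction modulo $J(\a)$, must have one of the two \emph{pure} shapes
\[
x_1^{u_1}x_2^{u_2}-x_3^{u_3}x_4^{u_1+u_2-1-u_3}\quad\text{or}\quad x_1^{v_1}x_3^{v_3}-x_2^{v_2}x_4^{v_1+v_3-1-v_2},
\]
and that for each fixed $u_3$ (resp.\ $v_2$) there is at most one minimal generator of that shape (the semigroup relation plus the degree constraint determine the remaining exponents). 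The bound then follows by showing $u_3\le 4h$ and $v_2\le 2h-1$: if either is exceeded, the paper exhibits the faces $\{0,2,3,4\}$ and $\{0,1\}$ in $\Delta_{l,r}$, making it connected, so $\tilde H_0(\Delta_{l,r})=0$ (note: generators correspond to nontrivial $\tilde H_0$, not $\tilde H_1$ as you wrote). This yields $(4h+1)+2h=6h+1$ candidates. Without the $x_2x_3-x_1x_4$ reduction you are left with $\Theta(h^2)$ quadruples $(s,t,s',t')$, and nothing in your outline explains how ``coprimality of $b_2,b_3$'' collapses this to a single parameter; that reduction is exactly what you are missing.
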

\begin{proof}
We first compute the number $N$ in equation \eqref{Eq1} in this situation. By \cite{B}, 
$$J(\a) = (x_2x_3-x_1x_4,x_3^{4h}-x_2^{2h-1}x_4^{2h+1}, x_1x_3^{4h-1},x_2^{2h}x_4^{2h},...,x_1^{2h+1}x_3^{2h-1}-x_2^{4h}).$$
By Buchberger's algorithm, \cite[Theorem ~15.8]{E}, these elements form a Gr\"obner basis for $J(\a)$ with respect to grevlex order. Thus the initial ideal of $J(\a)$ is 
$$\operatorname{in} (J(\a)) = (x_2x_3, x_3^{4h},x_1x_3^{4h-1},...,x_1^{2h}x_3^{2h},x_2^{4h}).$$
Since 
$$(x_2x_3):x_3^{4h} = (x_2),$$
$$(x_2x_3, x_3^{4h}, ..., x_1^ix_3^{4h-i}):x_1^{i+1}x_3^{4h-i-1} = (x_2,x_3)$$
for all $i = 0, ..., 2h-1$, and 
$$(x_2x_3, x_3^{4h}, ..., x_1^{2h}x_3^{2h}):x_2^{4h} = (x_3),$$
$\operatorname{in}(J(\a))$ has linear quotient. By \cite{HH}, $\reg \operatorname{in}(J(\a)) = 4h$. Moreover, by \cite[Theorem~15.17]{E}, 
$$\reg J(\a) \le \reg \operatorname{in}(J(\a)) = 4h;$$
therefore, $\reg J(\a) = 4h$. 

Note that the conductor of the numerical semigroup generated by $b_1, b_2, b_3$ is $c = 4h(2h-1) - 4h - (2h-1)+1$. Therefore, 
$$N = \max \{b_1 (4 + \reg J(\a)), b_1b_2(\frac{c+b_1}{b_3} + B) \} < 4b_1b_2(b_2+1).$$
By Theorem \ref{main} and the fact that minimal homogeneous generators of $I(\a+j)$ and $I(\a+j+b_1)$ are the same, $\mu'(I(\a+j))  = \mu'(\a+j+b_1)$ when $j \ge 4b_1b_2(b_2+1).$

Fix $j>4b_1b_2(b_2+1)$. We simply denote by $\Delta_{l,r}$ the simplicial complexes associated to elements of the semigroup $\overline{\a + j}.$ By Corollary \ref{gens}, and the fact that $x_1x_4 - x_2x_3\in J(\a)$, any minimal binomial inhomogeneous generator of $I(\a + j)$ is of the form
$$x_1^{u_1} x_2^{u_2} -x_3^{u_3} x_4^{u_1+u_2-1-u_3}, \text{ or } x_1^{v_1} x_3^{v_3} -x_2^{v_2} x_4^{v_1+v_3-1-v_2}.$$

Moreover, for each $u_3$, and each $v_2$, there can be at most one minimal binomial generator of $I(\a+j)$ of the two forms above. By Theorem \ref{Inh} and Definition \ref{deltalr}, these minimal binomial inhomgeneous generators correpond to $\Delta_{l,r}$ where $r$ is of the form $a_4+j + 4h v_2$ or $a_4+j + (2h-1)u_3$. 

Assume that either $v_2 \ge 2h$ or $u_3 \ge 4h+1$, then $r - (a_4 + j + 4h + 2h-1) \ge c$. Using representation as in \eqref{rep}, we can write
\begin{equation}\label{ELB}
r = a_4+j + tb_1 + b_2 + b_3 + w_2 b_2 + w_3 b_3
\end{equation}
for some non-negative integer $w_2, w_3$ such that $c \le w_2b_2 + w_3 b_3 < c  + b_1$. We have 
$$3 + t + w_2 + w_3 \le \frac{r - a_4-j}{b_1} + \frac{c+b_1}{b_3} + 3 < \frac{r}{b_1} \le l$$
since $b_1/b_3 < 4$ and $j > b_1(b_2 + 7)$. By Definition \ref{deltalr}, $\{0,2,3,4\}$ is a face of $\Delta_{l,r}$. Moreover, $b_1 = b_2 + b_3$, thus equation \eqref{ELB} and Definition \ref{deltalr} gives $\{0,1\}$ is a face of $\Delta_{l,r}$. Thus $\Delta_{l,r}$ is connected, so $\Delta_{l,r}$ does not support any minimal generator of $\bar I(\a+j)$. Thus $u_3 \le 4h$ and $v_2 \le 2h-1$. 
\end{proof}

We keep notation as in the proof of Lemma \ref{B2}. For each $u_3$ and each $v_2$, the following lemma gives the explicit form of minimal inhomogeneous generators of $I(\a + j)$.

\begin{Lemma}\label{B1} Let $j = (6h-1)m + s$ for some $m$ and $s$ such that $0\le s \le 6h-2$. Let $s = (2h-1)a - 4hb$ be the unique representation of $s$ in term of $2h-1$ and $4h$ such that $0\le a \le 4h$ and $0\le b \le 2h-1$ and $(a,b)\neq (4h,2h-1)$. If $j > 4b_1b_2(b_2+1)$ then the minimal inhomogeneous generators of $I(\a +j)$ are among the following forms
\begin{align*}
f^2_{v_2} & = x_1^{v_2 + m + 1 - b} x_3^{a+b + 2h - v_2} - x_2^{v_2} x_4^{m+2h+a - v_2}\\
g^2_{v_2} & = x_1^{v_2 + m + 2h - b} x_3^{a+b + 1 - 4h - v_2} - x_2^{v_2} x_4^{m-2h+a - v_2}\\
f^3_{u_3} & = x_1^{m-6h+a+1+u_3}x_2^{10h - 2 - a -b-u_3} - x_3^{u_3}x_4^{m+4h-2-b-u_3}\\
g^3_{u_3} & = x_1^{m-2h+1+a+u_3}x_2^{4h-1-a-b-u_3}- x_3^{u_3}x_4^{m+2h-1-b-u_3}
\end{align*}
where $0\le v_2 \le 2h-1$, and $0\le u_3 \le 4h$.
\end{Lemma}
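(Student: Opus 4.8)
The strategy is to pin down, for each admissible value of $v_2$ or $u_3$, exactly which right-hand side $r$ produces the inhomogeneous generator and then solve the two resulting linear equations explicitly. By Lemma \ref{B2} we already know $0 \le v_2 \le 2h-1$ and $0 \le u_3 \le 4h$, and that every minimal binomial inhomogeneous generator of $I(\a+j)$ has the shape $x_1^{u_1}x_2^{u_2} - x_3^{u_3}x_4^{u_1+u_2-1-u_3}$ or $x_1^{v_1}x_3^{v_3} - x_2^{v_2}x_4^{v_1+v_3-1-v_2}$, these arising from pairs $(l,r)$ with $r = a_4 + j + 4h\,v_2$ or $r = a_4 + j + (2h-1)u_3$. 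The point is that for each such $r$ there is (by the double cone structure and Lemma \ref{Lem2.1}, which forces $y_0 = e$; note $e = 1$ here because $\gcd(d, a_4+j)$: one checks $d = \gcd(b_1,b_2,b_3)=1$ for Bresinsky sequences) essentially a \emph{unique} candidate exponent vector, and one just has to write it down.

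\textbf{Key steps, in order.} First I would record the basic arithmetic of the Bresinsky sequence with the shift: with $k = a_4 + j$ and $j = (6h-1)m + s$, one has $k \equiv a_4 + s \pmod{b_1}$, and since $b_1 = 6h-1 = b_2 + b_3 = 4h + (2h-1)$ one rewrites $a_4 + s$ modulo $b_1$ using the representation $s = (2h-1)a - 4hb$. Concretely I would compute $k \bmod b_1$ and express the ``defect'' $r - (\text{leading multiple of } b_1)$ in terms of $a, b, m$. Second, for the $x_2$-type generators: set $r = k + 4h\,v_2$, and look for the non-negative solution of $y_0 k + y_1 b_1 + y_2 b_2 + y_3 b_3 = r$ with $y_0 = 1$, $y_2 = v_2$ (this is the monomial $x_2^{v_2}$ appearing), $\supp$ lying in $\{0,1,3\}$ on the $x_0$-side — i.e. $y_4$ on the dehomogenized side — and the complementary solution with $y_0 = 0$. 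Solving $y_1 b_1 + y_3 b_3 = r - v_2 b_2$ (the $x_0 = 0$ side, giving $x_1^{v_1}x_3^{v_3}$) and $b_1 + y_1' b_1 + y_3' b_3 = r - v_2 b_2$ doesn't literally happen; rather one solves for the two lattice points that are the two ``tips'' of the double cone and reads off $(v_1, v_3)$ and $(u_1+u_2-1-u_3$-type data$)$. Carrying this out produces $f^2_{v_2}$ when the relevant exponent is non-negative and $g^2_{v_2}$ when it is not (the two forms differ by adding $b_1 = b_2+b_3$ to move $x_3$'s exponent up by $2h-1$ and $x_1$'s down correspondingly via $b_2$, while $x_4$'s exponent shifts by the constraint $u_1 + \deg f = v + \deg g + e$ from Corollary \ref{gens}). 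Third, the entirely parallel computation with $r = k + (2h-1)u_3$ and the monomial $x_3^{u_3}$ fixed yields $f^3_{u_3}$ and $g^3_{u_3}$. Fourth, I would verify the degree bookkeeping: in each of the four displayed binomials the exponent sums on the two sides differ by exactly $e = 1$, which matches Corollary \ref{gens} and serves as a consistency check; and I would note that for the out-of-range intermediate cases one of the two forms has a negative exponent, so only one form is realized for each $(v_2)$ or $(u_3)$.

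\textbf{Main obstacle.} The genuine difficulty is purely bookkeeping: disentangling which of $f^2_{v_2}, g^2_{v_2}$ (resp. $f^3_{u_3}, g^3_{u_3}$) actually occurs, i.e. showing that the two forms are exactly the two representatives of $r \bmod b_1$ obtained by choosing $y_1$ as small as possible versus bumped by one unit of $b_1$, and confirming that the constant $m$ (rather than $m+1$, from the floor in $j = (6h-1)m + s$) lands in the right place after substituting $s = (2h-1)a - 4hb$. The hypothesis $j > 4b_1b_2(b_2+1)$ is needed precisely to guarantee $l = r/b_1$ is large enough that the double cone structure (Lemmas \ref{Lem2.2}, \ref{Lem2.3}) applies and that all the exponents $m - 6h + a + 1 + u_3$, $m + 4h - 2 - b - u_3$, etc., whose claimed non-negativity is what makes the binomial a genuine generator, are indeed controlled — so I would track those inequalities carefully. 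Beyond that, the proof is a finite explicit solve of two linear Diophantine equations in four unknowns with the support constraint dictated by the double cone, and there is no conceptual hurdle once the arithmetic of $k \bmod b_1$ is set up.
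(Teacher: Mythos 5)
Your overall strategy matches the paper's: write down the single linear Diophantine relation that a minimal inhomogeneous binomial generator must satisfy, use Lemma \ref{Lem2.1} to bound the ambient free parameter, and use the uniqueness of the representation $s = (2h-1)a - 4hb$ to read off the exponents. The paper carries this out on the affine side, taking the $(\a+j)$-degree equation $k + v_2 b_2 = v_1 b_1 + v_3 b_3$ (equivalently $k + u_3 b_3 = u_1 b_1 + u_2 b_2$) and rewriting it as $s = (2h-1)A + 4hB$ with $A = v_1 + v_3 - (m+2h+1)$, $B = v_1 - v_2 - m - 1$; you propose to do essentially the same computation on the projective side by reducing $r$ modulo $b_1$. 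These are two phrasings of the same calculation (related by Theorem \ref{Inh} and Lemma \ref{Lem4.1}), so there is no conceptual gap.

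However, for a lemma whose entire content is an explicit formula, the bookkeeping you defer is the proof, and several of the intermediate statements you do write down are wrong. First, for the $x_2$-type generators the equation on the $y_0=0$ side is $v_1 b_1 + v_3 b_3 = r$ with $r = k + v_2 b_2$; your displayed $y_1 b_1 + y_3 b_3 = r - v_2 b_2$ would drop the $k$ and is not the right relation (you even flag that it ``doesn't literally happen'' without replacing it). Second, your description of how $f^2_{v_2}$ and $g^2_{v_2}$ differ is backwards and has the wrong increments: passing from $f^2_{v_2}$ to $g^2_{v_2}$ raises the exponent of $x_1$ by $b_3 = 2h-1$ and lowers the exponent of $x_3$ by $b_1 = 6h-1$ (and the exponent of $x_4$ by $4h$), not ``$x_3$ up by $2h-1$ and $x_1$ down.'' Third, your opening assertion that there is ``essentially a \emph{unique} candidate exponent vector'' per $r$ is misleading: Lemma \ref{Lem2.1} gives the bound $v_1 + v_3 - (m+2h+1) > -(2h+1)$, which leaves exactly \emph{two} admissible values $A \in \{a,\,a-4h\}$ in the one-parameter solution family (the second allowed only when $a \ge 2h$), and these are precisely the two families $f$ and $g$; you correct this implicitly later but the framing obscures the actual argument. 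Finally, your worry about forcing non-negativity of exponents like $m - 6h + a + 1 + u_3$ is misplaced: the lemma only asserts the generators are \emph{among} the listed forms, and indeed the proof of Proposition \ref{Bre} relies on some of these forms failing to lie in $R$. To turn your sketch into a proof you should set up the degree relation as the paper does, isolate $s$ on one side so that the representation $s = (2h-1)a - 4hb$ applies directly, and actually solve for the exponents in the two admissible cases.
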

\begin{proof}
Assume that $f = x_1^{v_1} x_3^{v_3} -x_2^{v_2} x_4^{v_1+v_3-1-v_2}$ is a minimal generator of $I(\a + j)$. By Theorem \ref{Inh} and Definition \ref{deltam}, 
$$(6h- 1)m + s + 2h(2h + 1) + 2h -1 + 4hv_2 = (6h -1)v_1 + (2h - 1)v_3.$$
Equivalently, 
\begin{equation}\label{EL4.4}
s = (2h- 1)(v_1 + v_3 - (m + 2h + 1)) + 4h(v_1 - v_2 - m -1).
\end{equation}
If $f$ is a minimal generator of $I(\a + j)$ then $v_1 +v_3$ is as small as possible so that the equation \eqref{EL4.4} has non-negative integer solutions in $v_1$ and $v_3$. Moreover, by Lemma \ref{Lem2.1}, $v_1 + v_3 - (m+2h+1) > -(2h+1).$ Therefore, either 
$$v_1 + v_3 - (m+2h+1) = a, \text{ and } v_1 - v_2 - m-1 = -b$$
or 
$$v_1 + v_3 - (m+2h+1) = -(2h-1-b), \text { and } v_1 - v_2 - m-1 = 4h-a.$$
The first case gives the family $f^2_{v_2}$, while the second case gives the family $g^2_{v_2}$.

Assume that $g = x_1^{u_1} x_2^{u_2} -x_3^{u_3} x_4^{u_1+u_2-1-u_3}$ is a minimal generator of $I(\a + j)$. By Theorem \ref{Inh} and Definition \ref{deltam}, 
$$(6h-1)m + s + 2h(2h+1) + 2h-1 + (2h-1)u_3 = (6h-1)u_1 + 4hu_2.$$
Equivalently, 
\begin{equation}\label{EL4.5}
s = (2h-1)(u_1 -u_3-(m-2h+1)) + 4h(u_1 +u_2 -m- 2h).
\end{equation}
If $g$ is a minimal generator of $I(\a+j)$ then $u_1 + u_2$ is as small as possible so that the equation \eqref{EL4.5} has non-negative integer solutions in $u_1$ and $u_2$. Moreover, by Lemma \ref{Lem2.1}, $u_1 + u_2 -(m+2h) > - 2h$. Therefore, either 
$$u_1 - u_3 - (m-2h+1) = a, \text{ and } u_1 + u_2 -m-2h = -b$$
or 
$$u_1 - u_3 -(m-2h+1) = -(2h-1-b), \text{ and } u_1 +u_2 -m-2h = 4h-a.$$
The first case gives the family $g^3_{u_3}$, while the second case gives the family $f^3_{u_3}$.
\end{proof}

\begin{Proposition}\label{Bre} If $j > 4b_1b_2(b_2+1)$ then $\mu'(I(\a^h + j) ) \le  6h-1$. Moreover, equality happens if and only if $j = 4h \mod 6h-1$. In particular, the period of the periodicity of the betti numbers of $I(\a^h + j)$ in $j$ is exactly $6h-1$.
\end{Proposition}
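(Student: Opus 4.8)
The plan is to combine the explicit list of candidate inhomogeneous generators of Lemma \ref{B1} with the identity
$$\mu'(I(\a^h+j)) = \sum_{m > k(n+\reg J(\a))} \dim_K \tilde{H}_0(\Delta_m),$$
which follows from Proposition \ref{bettij}, Lemma \ref{inhsep}, and the fact that the number of minimal generators of an ideal in a fixed multidegree is its $0$-th graded Betti number. First I fix $j > 4b_1b_2(b_2+1)$, so that by Lemma \ref{B2} the quantity $\mu'(I(\a^h+j))$ is periodic in $j$ with period dividing $b_1 = 6h-1$ and hence depends only on $s := j \bmod (6h-1)\in\{0,\dots,6h-2\}$, equivalently on the pair $(a,b)$ of Lemma \ref{B1}; moreover the minimal inhomogeneous generators of $I(\a^h+j)$ lie among the $6h+1$ binomials obtained by taking, for each $v_2\in\{0,\dots,2h-1\}$, whichever of $g^2_{v_2},f^2_{v_2}$ has nonnegative exponents, and likewise for each $u_3\in\{0,\dots,4h\}$. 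Since $n=4$, Lemma \ref{Lem4.1} and the double-cone structure of Section \ref{doublecone} show that every $\Delta_m$ in the relevant degree range is the vertex-$0$ deletion of a double cone, hence a union of two cones (with apices $4$ and $1$); therefore $\dim_K\tilde{H}_0(\Delta_m)\in\{0,1\}$, equal to $1$ exactly when the two cones are nonempty and vertex-disjoint. Thus $\mu'(I(\a^h+j))$ equals the number of \emph{distinct} $(\a+j)$-degrees carried by the binomials of Lemma \ref{B1} at which $\Delta_m$ is disconnected.

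The heart of the argument is the resulting bookkeeping. Two of the $6h+1$ candidates can carry the same $(\a+j)$-degree, their difference being a monomial multiple of one of the binomial generators $x_2x_3-x_1x_4$, $x_3^{4h}-x_2^{2h-1}x_4^{2h+1}$, $x_1^{2h+1}x_3^{2h-1}-x_2^{4h}$ of $J(\a)$; such a degree then contributes only $1$. I will establish: (i) for every $s$ at least two such coincidences occur, whence $\mu'(I(\a^h+j))\le 6h-1$; (ii) when $s=4h$, i.e. $(a,b)=(4h,2h-2)$, all $g$-variants are admissible, the candidate list is $\{g^2_0,\dots,g^2_{2h-1}\}\cup\{f^3_0,\dots,f^3_{4h}\}$, and comparing $(\a+j)$-degrees one finds exactly two coincidences --- $g^2_0$ with $f^3_0$ (via $x_1^{2h+1}x_3^{2h-1}-x_2^{4h}$) and $g^2_{2h-1}$ with $f^3_{4h}$ (via $x_3^{4h}-x_2^{2h-1}x_4^{2h+1}$) --- while at each of the $6h-1$ resulting distinct degrees the complex $\Delta_m$ is disconnected (for the interior candidates the two monomials already have disjoint supports, and the two boundary degrees are checked directly), so $\mu'(I(\a^h+j))=6h-1$; (iii) for $s\ne 4h$, a finer analysis of which exponents of $f^2_{v_2},g^2_{v_2},f^3_{u_3},g^3_{u_3}$ remain nonnegative as a function of $(a,b)$ produces either a third coincidence or a candidate carrying a degree at which $\Delta_m$ is connected, so that $\mu'(I(\a^h+j))\le 6h-2$. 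Combining (i)--(iii) gives $\mu'(I(\a^h+j))\le 6h-1$ with equality exactly when $j\equiv 4h\pmod{6h-1}$.

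For the assertion about the period, note $\mu'(I(\a^h+j))=\b_0(I(\a^h+j))-\b_0(J(\a))$ with $\b_0(J(\a))$ independent of $j$, so by Theorem \ref{main} the minimal eventual period $p$ of the Betti numbers divides $6h-1$. If $p$ were a proper divisor of $6h-1$, then $\mu'$ would have period $p$; but $\{j:\mu'(I(\a^h+j))=6h-1\}=\{j\equiv 4h\pmod{6h-1}\}$ is a single residue class modulo $6h-1$, which is not a union of residue classes modulo $p<6h-1$ --- a contradiction. Hence $p=6h-1$.

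The step I expect to be the main obstacle is (ii)--(iii): the exhaustive case analysis over $s\in\{0,\dots,6h-2\}$ (equivalently over the admissible $(a,b)$) pinning down the exact number of degree-coincidences among the candidates of Lemma \ref{B1}, and --- in the case $s=4h$ --- verifying that every one of the $6h-1$ surviving degrees carries a disconnected squarefree divisor complex, i.e. that no surviving candidate is spuriously non-minimal. Each individual verification reduces to a membership question in the numerical semigroup generated by $a_1+j,a_2+j,a_3+j,a_4+j$, but carrying these out uniformly in $h$ and $s$ is the delicate part.
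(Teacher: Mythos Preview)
Your plan matches the paper's approach: use the candidate list of Lemma~\ref{B1}, case-split on $(a,b)$, and count surviving minimal generators. Your period argument at the end is correct and is how the final clause is deduced.

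One correction to your framing of (i): genuine $(\a+j)$-degree coincidences among the $6h+1$ candidates are \emph{not} what drives the bound in general. The only pairs of candidates that can share an $r$-value are those with $(v_2,u_3)=(0,0)$ or $(2h-1,4h)$, and a short check shows their $l$-values agree precisely when $a+b=6h-2$, i.e.\ $s=4h$. For all other $s$ there are no degree coincidences among the candidates --- yet $\mu'\le 6h-2$. The mechanism is instead that several candidates are non-minimal because they lie in $J(\a)+(\text{monomial})\cdot(\text{lower-degree candidate})$; equivalently, their $\Delta_m$ is connected even though no other candidate sits at that degree. This is the phenomenon you allow for in (iii), but it is the dominant one, not an exception, so (i) as stated is false and should be absorbed into (iii).

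The case analysis you flag as the obstacle is organized in the paper by the value of $a+b$ and is short once you look for relations of the form $(\text{candidate}) - (\text{monomial})\cdot(\text{candidate})\in J(\a)$. For $a+b<4h-1$ one checks directly that $f^2_0-x_4^{a+b+1}g^3_0$, $f^2_{2h-1}-x_3^{a+b+1}g^3_{4h-1-a-b}$, and $f^3_{4h-a-b}-x_3x_4^{2h-2}g^3_{4h-1-a-b}$ all lie in $J(\a)$, killing three candidates and giving $\mu'\le 6h-2$. The range $4h-1<a+b\le 6h-3$ is handled symmetrically (swap the roles of $f^2,g^3$ with $g^2,f^3$), and $a+b=4h-1$ collapses further ($g^2_0=g^3_0$ and every $f^2_{v_2}$ is redundant, giving $\mu'\le 4h+1$). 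For $a+b=6h-2$, i.e.\ $s=4h$, your description in (ii) is exactly right: the two coincidences $g^2_0\leftrightarrow f^3_0$ and $g^2_{2h-1}\leftrightarrow f^3_{4h}$ leave $6h-1$ distinct degrees, all of standard degree $m+2h+1$, and one verifies each $\Delta_{m+2h+1,r}$ is disconnected.
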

\begin{proof} We keep the notation as in Lemma \ref{B1}. We have the following cases. 

If $a + b < 4h-1$, then $g^2_i \notin R$ for any $i\ge 0$. Moreover, $f^2_{0} - x_4^{a+b+1} g^3_0 \in J(\a)$, thus $f^2_0$ is not minimal. Also, $f^2_{2h-1} - x_3^{a+b+1}g^3_{4h-1-a-b} \in J(\a)$, thus $f^2_{2h-1}$ is not minimal. Finally, note that $g_i^3\notin R$ for $i > 4h-1-a-b$, and $f^3_{4h-a-b} -x_3x_4^{2h-2}g^3_{4h-1-a-b} \in J(\a)$ which is not minimal. Thus $\mu'(I(\a+j)) \le 6h-2$.

If $a+b = 4h-1$, then $g^2_0= g^3_0$ is the only element in the family $g^2$ belongs to $I(\a+j)$ and $f^2_{v_2} -x_2^{v_2} x_4^{4h-v_2}g^2_0 \in J(\a)$, thus no element in the family $f^2$ are minimal. Thus $\mu'(I(\a+j)) \le 4h + 1$. 

If $4h-1 < a + b \le 6h - 3$, then $g^3_i\notin R$ for any $i \ge 0$. Moreover, $f^3_0 -x_4^{6h-2 - (a+b)} g^2_0 \in J(\a)$, thus $f^3_0$ is not minimal. Also, $f^3_{4h} - x_2^{6h-2-(a+b)}g^2_{a+b+1-4h} \in J(\a)$, thus $f^3_{4h}$ is not minimal. Finally, note that $g^2_i\notin R$ for $i > a+b+1-4h$, and $f^2_{a+b+2-4h}-x_2x_4^{4h-1}g_{a+b+1-4h} \in J(\a)$ which is not minimal. Thus $\mu'(I(\a+j)) \le 6h-2$. 

Finally, if $a + b = 6h-2$ then $a = 4h$ and $b = 2h-2$ and then $s = 4h$. In this case, the minimal inhomogeneous generators of $I(\a + j)$ are among
$$x_1^{m+2h+1-i}x_2^{i} - x_3^{4h - i}x_4^{m-2h+i}$$
for $i  = 0, ..., 4k-1$, and 
$$x_1^{m+2h+1-i}x_3^i -x_2^{2h-1-i} x_4^{m+i+1}$$
for $i = 1, ..., 2k-1$. 

Moreover, each of these generators has degree exactly $m+2h+1$. For each $r$ of the form $r = a_4+j + 4hv_2$ or $r = a_4 + j+(2h-1)u_3$ where $0\le v_2\le 4h-1$ and $0\le u_3 \le 2h-2$, it is easy to check that $\Delta_{m+2h+1,r}$ is disconnected. By Theorem \ref{Inh}, $\mu'(I(\a + j)) = 6h - 1$.
\end{proof}

\section*{Acknowledgements}
I would like to thank J\"urgen Herzog and Hema Srinivasan for introducing the problem and having lots of inspring conversations. Also, I would like to thank my advisor David Eisenbud for useful conversations and comments on earlier drafts of the paper. Finally, I would like to thank Dan Grayson and Mike Stillman for making Macaulay2 \cite{GS}, for which all the computations in this paper are carried out.

%%%%%%%%%%%%
%%%%%%%%%%%%
%%%%%%%%%%%%

\end{document}